\newtheorem{theorem}{Theorem}[section]
\newtheorem{lemma}[theorem]{Lemma}
\newtheorem{proposition}[theorem]{Proposition}
\newtheorem{corollary}[theorem]{Corollary}
\newtheorem{remark}[theorem]{Remark}
\DeclareMathOperator{\re}{Re}
\newcommand{\Z}{\mathbb{Z}}
\newcommand{\F}{\mathbb{F}}
\newcommand{\C}{\mathbb{C}}
\renewcommand{\pmod}[1]{\,(\mathrm{mod}\,#1)}
\def\cH{\mathcal{H}}
\def\cH{\mathcal{H}}
\def\cL{\mathcal{L}}
\def\cM{\mathcal{M}}
\newcommand{\legl}[2]{\left( \frac{#1}{#2} \right)_\ell}
\newcommand{\kommentar}[1]{ }
\begin{document}

\title{Nonvanishing of $L$--functions associated to fixed order characters over function fields }
\author{Chantal David}
\address{Department of Mathematics and Statistics, Concordia University, 1455 de Maisonneuve West, Montr\'eal, Qu\'ebec, Canada H3G 1M8}
\email{chantal.david@concordia.ca}
\author{Alexandra Florea}
\address{Department of Mathematics, University of California Irvine, 340 Rowland Hall, Irvine CA 92697, USA}
\email{\href{mailto:floreaa@uci.edu}{floreaa@uci.edu}}
\date{\today}
\author{Matilde Lalin}
\address{D\'epartement de math\'ematiques et de statistique, Universit\'e de Montr\'eal, CP 6128, succ. Centre-ville, Montr\'eal, QC H3C 3J7, Canada}\email{matilde.lalin@umontreal.ca}
\numberwithin{equation}{section}

\begin{abstract}
We show that a positive proportion of the values $L(1/2,\chi_c)$ are non-zero, where $\chi_c$ is the $\ell^{\text{th}}$ residue symbol for $\ell \geq 3$ over $\mathbb{F}_q[t]$, when averaging over square-free polynomials $c$ in $\mathbb{F}_q[t]$, as $q \equiv 1 \pmod{2\ell}$ is fixed and the degree of $c$ goes to infinity. In the case of $\ell=3$, we show that at least $1/6$ of $L(1/2,\chi_c) \neq 0$, while for $\ell>3$, the proportion depends on the order of the character.
This improves a previous result of Ellenberg, Li, and Shusterman showing that there are infinitely many  $\chi$ of (prime) order $\ell$ such that $L(1/2, \chi) \neq 0$ (with completely different techniques). Our result is achieved by computing the one-level density of zeros in the family of $L$--functions and surpassing the $(-1,1)$ barrier for the support of the Fourier transform of the test function, necessary to obtain a positive proportion of non-vanishing result.
Using similar techniques, we also prove a result towards the equidistribution of the angles of the order $\ell$ shifted Gauss sums when summing over prime arguments, a result which may be of independent interest.
\end{abstract}

\subjclass[2010]{11M06, 11M38, 11R16, 11R58}
\keywords{function fields, non-vanishing, one-level density, low-lying zeros}
\maketitle

\section{Introduction}
Central values of $L$--functions are of fundamental importance in number theory. Many results and conjectures concern the values of $L$--functions at special points, and the problem of proving non-vanishing results at the central point $s=1/2$ is major. A famous conjecture of Chowla predicts that $L(1/2,\chi) \neq 0$ for the $L$--function attached to any Dirichlet character $\chi$.
Focusing on the family of Dirichlet $L$--functions associated to primitive Dirichlet characters modulo $q$, Iwaniec and Sarnak \cite{is} showed that more than $(1/3-\varepsilon)$ of $L(1/2,\chi) \neq 0$, for $q$ sufficiently large. This proportion was subsequently improved in several works, the best current proportion of nonvanishing being $34.11\%$, due to Bui \cite{bui}. In the case of prime moduli, Khan, Mili\'cevi\'c and Ngo \cite{kmn} showed that more than $5/13$ of the central values of $L$--functions do not vanish.

Restricting to quadratic characters, Soundararajan \cite{Sound} showed that more than $7/8$ of $L(1/2,\chi) \neq 0$, as $\chi$ ranges over real primitive characters, by computing mollified moments in the family. Under the Generalized Riemann Hypothesis (GRH), \"Ozl\"uk and Snyder \cite{OS} showed that at least $15/16$ of the $L$--functions attached to quadratic characters $\chi$ do not vanish, by computing the one-level density for the low-lying zeroes in the family. The density conjectures of Katz and Sarnak \cite{KS} on the zeroes of $L$--functions imply
that $L(1/2, \chi) \neq 0$ for almost all quadratic Dirichlet $L$--functions. The results become sparser and sparser as one considers $L$--functions associated to higher-order characters. Focusing on the family of $L$--functions associated to cubic characters over $\mathbb{Q}$, Baier and Young \cite{BY} showed that the number of primitive Dirichlet characters $\chi$ of order $3$ with conductor less than $Q$ for which $L(1/2,\chi) \neq 0$ is bounded below by $Q^{6/7-\varepsilon}$. Note that while this result provides an infinite family of $L$--functions that are non-vanishing at the central point, it does not obtain a positive proportion of non-vanishing, since the family has size $Q$. More recently, David and G\"ulo\u{g}lu \cite{dg} showed that under GRH, more than $2/13$ of the $L$--functions of the  cubic Dirichlet characters $\chi_c$, for square-free $c$ in the Eisenstein field, do not vanish at the central point. This was achieved by computing the one-level density in the family, as in the present paper. While the approach of computing the one-level density of zeros usually yields non-vanishing results conditional on the GRH, this assumption was removed in recent breakthrough work of David, de Faveri, Dunn, and Stucky \cite{ddds}, who showed that more than $14\%$ of the central values in the same family of cubic $L$--functions are non-zero, by computing mollified moments.

In the function field setting, Bui and Florea \cite{Bui-Florea} obtained that at least $94\%$ of the quadratic Dirichlet $L$--functions do not vanish at the central point, by computing the one-level density, and the authors of the present article \cite{DFL2} showed that a positive proportion of the central values of $L$--functions associated to cubic characters over $\mathbb{F}_q[t]$ do not vanish when $q \equiv 2 \pmod3$. We note that this is the non-Kummer setting, when the base field does not contain the cubic roots of unity, and it is a regime more similar to that considered by Baier and Young \cite{BY} in the integer setting. Contrastingly, \cite{DFL2} provides a positive proportion result for the \textit{full} family of cubic $L$--functions, as opposed to the thin families in \cite{dg}, \cite{ddds} or the present article.

In the case of $L$--functions associated to quartic characters, Gao and Zhao \cite{gao_zhao} showed that more than $5\%$ of quartic $L$--functions in a thin family over the Gaussian field are non-zero at the central point, by computing the one-level density of zeros, under GRH. They also showed \cite{gao_zhao2} that a positive proportion of the cubic and quartic $L$--functions over $\mathbb{Q}$ are non-zero at the central point, under GRH, by following the method in \cite{DFL2}. This was also achieved for the full family of cubic $L$--functions over the Eisenstein field by G\"ulo\u{g}lu and Yesilyurt \cite{G-Y}, generalizing the  technique of  \cite{DFL2} to this case, again under GRH.

We emphasize that all the non-vanishing results mentioned above hold for small order characters ($\ell=2,3,4$). For general $\ell$, in the number field setting, Blomer, Goldmakher, and Louvel \cite{bgl} showed that there are infinitely many Hecke $L$--functions associated to order $\ell$ characters over number fields containing the $\ell^{\text{th}}$ roots of unity, for which the $L$--functions do not vanish at the central point; however, they did not obtain a positive proportion. In the function field setting, Ellenberg, Li, and Shusterman \cite{ELS} obtained a similar result, using algebraic geometry tools.

One of the reasons for the scarcity of positive proportion non-vanishing results for higher-order characters is our limited understanding of the Gauss sums associated to fixed order characters.
When the character is quadratic, the Gauss sums have a multiplicative structure, which makes them amenable to study. They are much more complicated as the order of the character increases, as the Gauss sums are no longer multiplicative for $\ell>2$.
While quadratic Gauss sums are basically constant, cubic and higher-order Gauss sums are chaotic, and their generating series are related to theta functions associated to metaplectic forms, which are not well-understood.

In this article, we consider the thin family of order $\ell$ Dirichlet characters over $\mathbb{F}_q[t]$ with square-free discriminants and their associated $L$--functions, for general $\ell$. We will show that for fixed $\ell$, a positive proportion of such characters has the property that the associated $L$--functions do not vanish at the central point $1/2$, when the size $q$ of the base field $\mathbb{F}_q$ is fixed. In our work, $q \equiv 1 \pmod{2\ell}$, so we are in the Kummer setting, when the base field contains the order $\ell$ roots of unity in $\mathbb{F}_q$. This setting is the analogue of the regimes considered in \cite{dg, ddds, gao_zhao}. To obtain the non-vanishing results, we compute the one-level density of zeros in the family of $L$--functions.

More specifically, for any positive integer $d$ such that $\ell \nmid d$, we consider the family
\begin{equation}
\label{family}
\mathcal{F}_\ell(d):=\left\{\chi_c =\left(\frac{\cdot}{c}\right)_\ell :\, c\, \mbox{ square-free}, \deg(c) = d \right\},
\end{equation}
where  $(\frac{\cdot}{c})_{\ell}$ denotes the $\ell^{\text{th}}$ residue symbol modulo $c$ defined in \eqref{chi-c-notation}. Since $\ell \nmid d$, the character $\chi_c$ is odd, and 
the $L$--function associated to $\chi_c$ can be written as
$$\cL(u, \chi_c) = \prod_{j=1}^{d-1} \left( 1 -  q^{1/2} e^{- 2 \pi i \theta_{c, j}} u \right).$$

Let $\phi$ be an even test function in the Schwartz space. We consider
$$\Phi(\theta) := \sum_{k \in \mathbb{Z}} \phi ( (d-1) ( \theta-k) ).$$
Note that this function is periodic with period 1, and localized in an interval of size $ \sim 1/d$ in $\mathbb{R}/ \mathbb{Z}$. We define
\begin{equation}
\label{sigma}
\Sigma(\phi,\chi_c):=\sum_{j=1}^{d-1}\Phi\left( \theta_{c, j}\right).
\end{equation}
The one-level density of zeros is defined to be the limit as $d \to \infty$ of
\begin{equation}
\label{1ld_def}
\left\langle\Sigma(\phi,\chi_c)\right\rangle_{\mathcal{F}_\ell(d)} := \frac{1}{|\mathcal{F}_\ell(d)|}\sum_{\chi_c \in \mathcal{F}_\ell(d)}\Sigma(\phi,\chi_c).
\end{equation}

We will prove the following results.
\begin{theorem}
\label{main_thm}
Let $\ell \geq 3$ be an integer, and assume that $q \equiv 1 \pmod{2\ell}$. Let $d \not \equiv 0 \pmod{\ell}$, and $\varepsilon>0$. Let $\phi$ be an even test function in the Schwartz space whose Fourier transform is supported in $(-v,v)$, where
\[v=\begin{cases}\frac{6}{5} & \ell=3,\\
\frac{26}{23} & \ell=4,\\
1+\frac{2(\ell-2)}{2\ell^2-\ell+2} & 5\leq \ell \leq 8,\\
1+\frac{2(\ell-2)}{3\ell^2-9\ell+2} & \ell=9, 10,\\
1 + \frac{6 (\ell - 2)}{9 \ell^2 - 31 \ell + 6} & 11\leq  \ell.\end{cases}
\]
Then
\begin{equation*}
\left\langle\Sigma(\phi,\chi_c)\right\rangle_{\mathcal{F}_\ell(d)}  = \widehat{\phi}(0) +O \left(\frac{1}{d}\right) .
\end{equation*}
\end{theorem}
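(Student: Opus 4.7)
The strategy follows the standard template for computing one-level densities in function-field families, but with the off-diagonal analysis pushed harder in order to extend the support of $\widehat{\phi}$ past the trivial barrier $v = 1$. I would begin with the explicit formula for the zeros of $\cL(u,\chi_c)$: using the Fourier expansion of $\Phi$ and the identity $\sum_j e^{-2\pi i k\theta_{c,j}} = q^{-k/2}\sum_{\deg f = k}\Lambda(f)\chi_c(f)$ (obtained by logarithmically differentiating $\cL(u,\chi_c)$), $\Sigma(\phi,\chi_c)$ reduces to
$$\widehat{\phi}(0) \;-\; \frac{1}{d-1}\sum_{k\geq 1}\widehat{\phi}\!\left(\frac{k}{d-1}\right)\frac{1}{q^{k/2}}\sum_{\deg f = k}\Lambda(f)\bigl(\chi_c(f)+\overline{\chi_c(f)}\bigr),$$
with the $k$-sum truncated at $k\leq v(d-1)$ by the support hypothesis. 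Averaging over $\cF_\ell(d)$, the $f$ that are $\ell$-th powers of a polynomial coprime to $c$ satisfy $\chi_c(f) = 1$; this contribution is $O(1/d)$ because $\ell$-th powers are sparse and the weight $q^{-k/2}$ supplies geometric convergence.

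The main task is therefore to bound the average, for $f$ that is \emph{not} an $\ell$-th power, of $\chi_c(f)$ over square-free monic $c$ with $\deg c = d$. I would carry out the standard three-step procedure: first, detect the square-free condition by M\"obius inversion in $\F_q[t]$, writing $c = a^2 b$ and separating the $a$- and $b$-sums; second, apply $\ell$-th power reciprocity in $\F_q[t]$, which is clean in the setting $q\equiv 1\pmod{2\ell}$ since $\F_q$ contains the $2\ell$-th roots of unity, swapping $\chi_b(f)$ for $\chi_f(b)$ up to an explicit root of unity; third, apply the function field Poisson summation formula to the resulting incomplete character sum of length $q^{d - 2\deg a}$ in $b$ modulo $f$. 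After Poisson, the original sum is recast as a short dual sum of length $|f| = q^{\deg f}$ whose coefficients are $\ell$-th order Gauss sums $G_\ell(\cdot, f)$.

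The crux of the argument, and its main obstacle, is then the estimation of weighted averages of $\ell$-th order Gauss sums over primes $f$ of degree $n \leq v(d-1)$. For $\ell = 3$, the metaplectic cubic theta function theory provides sharp cancellation; for $\ell = 4$, weaker but still non-trivial bounds are available from biquadratic Gauss-sum theory; and for $\ell \geq 5$, one is reduced to the Weil-type bound $|G_\ell(m,f)|\leq |f|^{1/2}$ on individual Gauss sums, combined with Cauchy-Schwarz and orthogonality in the dual variable. Inserting these into the averaged explicit formula and optimizing the truncation parameter against the length of the dual Gauss-sum sum yields the piecewise expression for $v$ stated in the theorem. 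The cut-offs $\ell \leq 4$, $5\leq \ell\leq 8$, $\ell = 9,10$, and $\ell \geq 11$ correspond to distinct regimes in which different bounds dominate, with the richer structure available for low-order Gauss sums permitting the largest admissible support.
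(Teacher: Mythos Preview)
Your setup through Poisson summation is correct and matches the paper. The genuine gap is in the last paragraph: you do not explain how to obtain cancellation in $\sum_{\pi\in\mathcal P_n} G_\ell(V,\pi)$, and the methods you sketch would not suffice. For $\ell\geq 3$ the Gauss sums $G_\ell(V,\cdot)$ are \emph{not multiplicative}, so there is no Euler product and no direct way to pass from information about $G_\ell$ at general arguments to sums over primes. The pointwise bound $|G_\ell(V,\pi)|=|\pi|^{1/2}$ combined with Cauchy--Schwarz and orthogonality yields nothing beyond the trivial support $v=1$; this is precisely the barrier one has to break. Your reference to ``metaplectic theta function theory'' for $\ell=3$ and ``biquadratic Gauss-sum theory'' for $\ell=4$ is in the right neighborhood, but those tools give information about sums of $G_\ell$ over \emph{all} monic polynomials, not over primes, and you have not said how to bridge that gap.

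The paper's mechanism is Vaughan's identity (following Heath-Brown--Patterson), which rewrites the prime sum as a combination of Type~I sums $\sum_{b}\sum_{c:\,b\mid c}G_\ell(V,c)$ and Type~II bilinear sums. The Type~II sums are handled by a large sieve inequality for order-$\ell$ characters (Theorems~\ref{largesieve} and~\ref{largesieve2}), which is proved in the paper and gives a bound independent of $\ell$. The Type~I sums are controlled via the generating series $\psi^{(i)}(r,u)=(1-q^\ell u^\ell)^{-1}\sum_F G_\ell(r,F)u^{\deg F}$, using its functional equation, a convexity bound, and (for $\ell\geq 11$) a Lindel\"of-on-average estimate obtained from an approximate functional equation plus the large sieve again. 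The piecewise thresholds in $v$ arise from optimizing the Vaughan parameter $U$ against the Type~I and Type~II error terms, with the regime changes at $\ell=5,9,11$ reflecting which Type~I bound (Lindel\"of on average, convexity with $\ell\leq 8$, convexity with $\ell\geq 9$, or the sharper Lindel\"of-on-average variant) is optimal---not, as you suggest, fundamentally different structural input for different $\ell$.
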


From Theorem \ref{main_thm}, we also obtain the following non-vanishing result. We remark that since characters of order $\ell \geq 3$ have unitary symmetry, we need $v > 1$ for all $\ell$ in Theorem \ref{main_thm} to obtain a positive proportion of non-vanishing result.
\begin{corollary}
\label{cor_nonvanishing}
Let $\mathcal{F}_{\ell}(d)$ be the family defined in \eqref{family}.

If $\ell=3$, then $L(1/2,\chi) \neq 0$ for at least $1/6$ of the characters in $\mathcal{F}_{\ell}(d)$.

If $\ell=4$, then $L(1/2,\chi) \neq 0$ for at least $3/26$ of the characters in $\mathcal{F}_{\ell}(d)$.

If $5\leq \ell\leq 8$, then $L(1/2,\chi) \neq 0$ for at least $\frac{2(\ell-2)}{2\ell^2+\ell-2}$ of the characters in $\mathcal{F}_{\ell}(d)$.

If $\ell=9,10$, then $L(1/2,\chi) \neq 0$ for at least $\frac{2(\ell-2)}{3\ell^2-7\ell-2}$ of the characters in $\mathcal{F}_{\ell}(d)$.

If $\ell \geq 11$, then $L(1/2,\chi) \neq 0$ for at least $\frac{6(\ell-2)}{9\ell^2-25\ell-6}$ of the characters in $\mathcal{F}_{\ell}(d)$.

\end{corollary}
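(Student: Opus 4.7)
The plan is to convert Theorem \ref{main_thm} into a non-vanishing statement via the standard one-level density argument, using a non-negative test function of Fej\'er type. I take $\phi$ in the Schwartz space, non-negative on $\mathbb{R}$, with $\widehat{\phi}$ supported in $(-v, v)$, chosen to approximate the kernel
\[
\phi_v(x) = \left( \frac{\sin(\pi v x)}{\pi v x}\right)^2,
\]
which satisfies $\phi_v \geq 0$, $\phi_v(0) = 1$, and $\widehat{\phi_v}(0) = 1/v$. Since $\phi_v$ itself is not Schwartz (it decays only like $1/x^2$), I would work with $\phi_{v-\varepsilon}$ multiplied on the Fourier side by a narrow Schwartz bump of total mass $1$ whose Fourier support lies in $(-\varepsilon, \varepsilon)$, then let $\varepsilon \to 0$ at the end.

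Next, whenever $L(1/2, \chi_c) = 0$, the factorization
\[
\cL(u, \chi_c) = \prod_{j=1}^{d-1}\left( 1 - q^{1/2} e^{-2 \pi i \theta_{c,j}} u\right)
\]
forces at least one $\theta_{c,j} \equiv 0 \pmod{1}$. Since $\phi \geq 0$ implies $\Phi \geq 0$ and $\Phi(0) = \sum_{k \in \mathbb{Z}} \phi((d-1)k) \geq \phi(0)$, I obtain the pointwise bound $\Sigma(\phi, \chi_c) \geq \phi(0) \cdot \mathbf{1}_{\{L(1/2, \chi_c) = 0\}}$. Averaging over $\mathcal{F}_\ell(d)$ and applying Theorem \ref{main_thm} then gives
\[
\frac{\#\{\chi_c \in \mathcal{F}_\ell(d) : L(1/2, \chi_c) = 0\}}{|\mathcal{F}_\ell(d)|} \;\leq\; \frac{\widehat{\phi}(0)}{\phi(0)} + O\!\left(\frac{1}{d}\right) \;\leq\; \frac{1}{v} + o(1),
\]
so the non-vanishing proportion is at least $(v-1)/v + o(1)$.

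Plugging in the values of $v$ from Theorem \ref{main_thm} yields the stated bounds: for $\ell = 3$, $v = 6/5$ gives $(v-1)/v = 1/6$; for $\ell = 4$, $v = 26/23$ gives $3/26$; for $5 \leq \ell \leq 8$, writing $v = 1 + \frac{2(\ell-2)}{2\ell^2 - \ell + 2}$ and simplifying gives $(v-1)/v = \frac{2(\ell-2)}{2\ell^2 + \ell - 2}$, with completely analogous algebraic manipulations in the ranges $\ell = 9, 10$ and $\ell \geq 11$.

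The main technical point, which is routine but worth flagging, is the optimality of the Fej\'er bound $\widehat{\phi}(0)/\phi(0) \geq 1/v$ across all non-negative Schwartz $\phi$ whose Fourier transform is supported in $[-v, v]$. Writing $\phi = h * h$ with $\widehat{h}$ supported in $[-v/2, v/2]$ and applying Cauchy--Schwarz yields this lower bound, with equality only as $\widehat{h} \to v^{-1/2} \mathbf{1}_{[-v/2, v/2]}$; the smoothing described above produces an admissible Schwartz function whose ratio is arbitrarily close to $1/v$, completing the extraction.
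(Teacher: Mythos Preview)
Your proposal is correct and follows essentially the same approach as the paper: both use the Fej\'er-type test function $\phi_v(x) = (\sin(\pi v x)/(\pi v x))^2$, exploit its nonnegativity together with $\phi_v(0)=1$ and $\widehat{\phi_v}(0)=1/v$, and deduce that the nonvanishing proportion is at least $1-1/v+o(1)$. You are in fact slightly more careful than the paper, which applies $\phi_v$ directly without commenting on the Schwartz hypothesis, whereas you correctly flag the $1/x^2$ decay and indicate how to smooth; your added remark on the optimality of $1/v$ via $\phi = h*h$ and Cauchy--Schwarz is a nice bonus not present in the paper.
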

\begin{remark}
We note that when $\ell \to \infty$, the proportion of non-vanishing provided by Corollary \ref{cor_nonvanishing} approaches $0$.
\end{remark}

The following are crucial ingredients in the proof of Theorem \ref{main_thm}.

\begin{theorem}(Large Sieve for characters of order $\ell$)
\label{largesieve}
Let $\lambda(N)$ be a sequence of complex numbers defined on monic polynomials in $\mathbb{F}_q[t]$. Then we have 
$$
\sum_{M \in \cH_m} \left| \sum_{N \in \cH_n} \lambda(N) \legl{M}{N} \right|^2 \ll q^{\varepsilon(m+n)} \left( q^m + q^n + q^{2(m+n)/3} \right) \sum_{N \in \cH_n} |\lambda(N)|^2,
$$
where $\mathcal{H}_d$ denotes the set of monic, square-free polynomials of degree $d$ over $\mathbb{F}_q[t]$.
\end{theorem}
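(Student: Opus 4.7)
The plan is to adapt Heath-Brown's cubic large sieve to the setting of $\ell$-th power residues over $\mathbb{F}_q[t]$, exploiting the Kummer hypothesis $q\equiv 1\pmod{2\ell}$, which provides full $\ell$-th power reciprocity. The target bound decomposes naturally into three pieces: a diagonal term producing $q^m$, a ``dual'' diagonal term producing $q^n$, and an off-diagonal term handled by reciprocity plus the Weil bound for function-field character sums, producing the crucial $q^{2(m+n)/3}$ contribution.

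First, I would open the square,
\begin{equation*}
\sum_{M \in \cH_m}\left|\sum_{N \in \cH_n}\lambda(N)\legl{M}{N}\right|^{2} = \sum_{N_1,N_2 \in \cH_n}\lambda(N_1)\overline{\lambda(N_2)}\sum_{M \in \cH_m}\legl{M}{N_1}\overline{\legl{M}{N_2}},
\end{equation*}
and combine the two residue symbols using $\overline{\legl{M}{N_2}}=\legl{M}{N_2}^{\ell-1}$ together with multiplicativity in the lower argument, collapsing the inner factor to $\legl{M}{N_1 N_2^{\ell-1}}$. Factoring $N_1 N_2^{\ell-1}=AB^\ell$ with $A$ being $\ell$-th power free, the inner character is principal precisely when $A=1$. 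Since $N_1,N_2$ are square-free and $\ell\geq 3$, a prime-by-prime comparison of exponents shows $A=1$ if and only if $N_1=N_2$, so the diagonal contributes at most $q^m\sum_N|\lambda(N)|^2$, producing the $q^m$ term.

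For $N_1\neq N_2$, I would invoke $\ell$-th power reciprocity to write $\legl{M}{N_1 N_2^{\ell-1}}=\omega(M,N_1,N_2)\legl{N_1 N_2^{\ell-1}}{M}$ for an explicit root-of-unity factor depending only on leading coefficients and degrees, and then detect the square-free condition on $M$ via $\mu^{2}(M)=\sum_{D^2\mid M}\mu(D)$. The inner $M$-sum becomes an incomplete Dirichlet character sum whose conductor divides $N_1 N_2^{\ell-1}$, to which the Weil bound applies. Summed trivially over $N_1,N_2$, this yields the complementary $q^n$ term. The middle term $q^{2(m+n)/3}$ does not come from a direct Cauchy--Schwarz, since that route gives only the weaker bound $q^{(m+n)/2}\cdot\max(q^m,q^n)^{1/2}$. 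Instead, it arises from iterating the inequality: after the Weil step the residual expression is recognized as a large-sieve quantity for a smaller family, and a Hölder interpolation between this ``dual'' bound and the reciprocity bound produces the geometric-mean exponent $2(m+n)/3$, exactly as in Heath-Brown's cubic case. The main obstacle I anticipate is tracking the divisor-type weights accumulating from the Möbius inversion and the reciprocity factors through this interpolation, and absorbing them into the $q^{\varepsilon(m+n)}$ slack; the clean arithmetic of $\mathbb{F}_q[t]$, with a single place at infinity and no archimedean ramification, should make this bookkeeping considerably cleaner than in the integer analogue.
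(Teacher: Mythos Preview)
Your outline captures the Heath-Brown philosophy, but two concrete steps are wrong or missing, and the argument would not close as written.

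First, the claim that the Weil (or P\'olya--Vinogradov) bound on the inner $M$-sum, summed trivially over $N_1,N_2$, produces the $q^n$ term is incorrect. The conductor of the combined character divides $N_1N_2$, so the character sum bound is $\ll q^n$ \emph{per pair}; after applying $|\lambda(N_1)\overline{\lambda(N_2)}|\le \tfrac12(|\lambda(N_1)|^2+|\lambda(N_2)|^2)$ and summing over the $q^{2n}$ pairs, you obtain $q^{2n}\sum|\lambda|^2$, not $q^n\sum|\lambda|^2$. This is exactly the paper's initial bound $\mathcal{B}_1(m,n)\ll q^{\varepsilon n}(q^m+q^{2n})$. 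The $q^n$ term in the final statement comes only at the very end, from the duality $\mathcal{B}_1(m,n)=\mathcal{B}_1(n,m)$ applied \emph{after} the recursion has already produced $q^m+q^{4n/3}+q^{2(m+n)/3}$.

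Second, the passage from $q^{2n}$ down to $q^{2(m+n)/3}$ is not a single H\"older interpolation; it is a genuine fixed-point iteration on the exponent, and it requires machinery you have not set up. The paper introduces auxiliary norms $\mathcal{B}_2$ (outer variable $M$ monic, not square-free) and $\mathcal{B}_3$ (inner variables $N_1,N_2$ coprime), and proves a chain of inequalities: $\mathcal{B}_2\ll \mathcal{B}_1$ via the $\ell$-th-power-free factorization $M=M_1\cdots M_{\ell-1}^{\ell-1}M_\ell^\ell$; $\mathcal{B}_2\ll \mathcal{B}_3$ by M\"obius on $(N_1,N_2)$; and, crucially, $\mathcal{B}_3(m,n)\ll q^{m-n}\max_{k\le 2n-m-1}\mathcal{B}_2(k,n)$ via \emph{Poisson summation} in $M$, which converts the $M$-sum into a dual sum of length $2n-m$ weighted by Gauss sums. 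The Weil bound alone does not give this length reduction. Feeding these into one another yields the recursion $\alpha\mapsto 2-\tfrac{2}{3\alpha-1}$ on the exponent of $q^n$, whose fixed point is $\alpha=4/3$; only then does duality convert $q^{4n/3}$ and $q^{4m/3}$ into the symmetric $q^m+q^n$. Your sketch omits both the Poisson step and the $\ell$-th-power-free decomposition, and without them there is no mechanism to shrink the dual length and drive the recursion.
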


We also need to prove the following variant.
\begin{theorem}
\label{largesieve2}
We have
\begin{align*}
\sum_{M \in \mathcal{M}_m} \left| \sum_{N \in \mathcal{H}_n}   \lambda(N) \legl{M}{N} \right|^2 \ll q^{\varepsilon (m+n)} \left( q^m + q^{n+m/3} +  q^{2(m+n)/3} \right) \sum_{N \in \mathcal{H}_n} | \lambda(N)|^2,
\end{align*}
where $\mathcal{M}_m$ denotes the set of monic polynomials of degree $m$ in $\mathbb{F}_q[t]$.
\end{theorem}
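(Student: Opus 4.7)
The plan is to decompose each $M \in \mathcal{M}_m$ uniquely as $M = AB^2$ with $A \in \mathcal{H}_a$ square-free and $B \in \mathcal{M}_b$ monic, where $a + 2b = m$. No coprimality between $A$ and $B$ is required, since $A$ is determined as the product of primes of $M$ appearing with odd multiplicity, and $B = \sqrt{M/A}$ is then forced. The $\ell$-th power residue symbol factors multiplicatively as $\legl{M}{N} = \legl{A}{N}\legl{B}{N}^2$, so setting $\lambda_B(N) := \lambda(N)\legl{B}{N}^2$ (which has the same $\ell^2$-norm as $\lambda$), Theorem \ref{largesieve} applied to the outer sum over $A \in \mathcal{H}_a$, followed by a trivial factor $q^b$ from the sum over $B \in \mathcal{M}_b$, yields the baseline
\[
\sum_{M \in \mathcal{M}_m} \Bigl| \sum_N \lambda(N) \legl{M}{N} \Bigr|^2 \ll q^{\varepsilon(m+n)} \sum_{b=0}^{\lfloor m/2 \rfloor} q^b \bigl( q^{m-2b} + q^n + q^{2(m-2b+n)/3} \bigr) \|\lambda\|_2^2.
\]
The three individual maxima over $b$ are $q^m$, $q^{n+m/2}$, and $q^{2(m+n)/3}$: the first and third already match the theorem, but the middle term $q^{n+m/2}$ is weaker than the target $q^{n+m/3}$ by a factor of $q^{m/6}$.

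To close the gap, I would apply Theorem \ref{largesieve} a second time in the reverse direction, via reciprocity. Since $q \equiv 1 \pmod{2\ell}$, the reciprocity law for $\ell$-th power residue symbols in $\mathbb{F}_q[t]$ is sign-free for coprime monic arguments, so $\legl{B}{N}^2 = \legl{N}{B}^2$. For fixed $A$, writing $\lambda_A(N) = \lambda(N)\legl{A}{N}$, we then need to bound $\sum_{B \in \mathcal{M}_b}\bigl|\sum_N \lambda_A(N) \legl{N}{B}^2\bigr|^2$. Since $B$ is not restricted to be square-free, I would further decompose $B = A'(B')^2$ with $A'$ square-free, apply Theorem \ref{largesieve} to the sum over $A'$ (with the character $\legl{\cdot}{\cdot}^2$, which is another $\ell$-th power residue symbol when $\ell$ is odd and a character of order $\ell/\gcd(2,\ell)$ in general), and iterate this decomposition $O(\log m)$ times until the leftover square part is negligible. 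This produces a middle-term contribution of order $q^a \cdot q^n = q^{m-2b+n}$. Taking the pointwise minimum of the two bounds replaces the middle term by
\[
\min\bigl( q^{b+n},\ q^{m-2b+n}\bigr)\ =\ q^{\min(b,\, m-2b) + n},
\]
whose maximum over $b \in [0, m/2]$ is attained at $b = m/3$ and equals exactly $q^{n+m/3}$. The terms $q^m$ and $q^{2(m+n)/3}$, each maximized at $b = 0$ in both bounds, are unaffected by the combination.

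The main obstacle will be the iterated dual large sieve: the accumulated polynomial and geometric factors from the $O(\log m)$ levels of recursion must be tracked carefully and absorbed into $q^{\varepsilon(m+n)}$ without spoiling the final exponent. A secondary technical point is that Theorem \ref{largesieve} is stated for the $\ell$-th power residue symbol itself rather than its square; however, its proof rests solely on the orthogonality of characters of order dividing $\ell$ on square-free support, and adapts with identical exponents to the squared character. Once these two issues are handled, the optimization at $b = m/3$ described above delivers the theorem.
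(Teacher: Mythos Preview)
Your proposal has a genuine gap in the treatment of the middle term. The claim that the ``second bound'' (fix $A$, iterate the square/square-free decomposition inside $B$) produces a middle-term contribution of order $q^{a}\cdot q^{n}=q^{m-2b+n}$ is not correct as stated. After one step $B=A'(B')^{2}$ and an application of Theorem~\ref{largesieve} to the square-free variable $A'$, the middle term for the $B$-sum is $q^{b'+n}$ coming from the trivial sum over $B'\in\mathcal{M}_{b'}$; this can be as large as $q^{b/2+n}$, not $q^{n}$. Iterating the decomposition does not remove this residual $b$-dependence: at each stage the large sieve is spent on one square-free factor and the remaining factors are bounded trivially, so after a full $2$-adic decomposition $M=\prod_{j\ge 0}A_{j}^{2^{j}}$ the middle term one actually obtains from ``large sieve on $A_{1}$, trivial on the rest'' is $q^{\,a_{0}+\sum_{j\ge 2}a_{j}+n}$, not $q^{a_{0}+n}$. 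If instead one interprets your iteration as a recursion on $\mathcal{B}_{2}$ and feeds the output back in, the map on the middle-term exponent is $\alpha\mapsto 1/(3-\alpha)$, whose fixed point is $(3-\sqrt{5})/2\approx 0.382$, strictly worse than $1/3$. Either way the optimisation $\min(b,m-2b)=m/3$ is built on an input that you have not established.

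The paper's proof is both different and shorter. Rather than a $2$-adic decomposition $M=AB^{2}$, it uses the $\ell$-adic decomposition $M=M_{1}M_{2}^{2}\cdots M_{\ell-1}^{\ell-1}M_{\ell}^{\ell}$ with $M_{1},\dots,M_{\ell-1}$ square-free and pairwise coprime (this is Lemma~\ref{lem6}); the factor $M_{\ell}^{\ell}$ contributes trivially to the $\ell$-th residue symbol. One then applies Theorem~\ref{largesieve} to whichever of $M_{1},M_{2}$ has larger degree. The key elementary inequality is that if $m_{1}\ge m_{2}$ then
\[
m_{2}+m_{3}+\cdots+m_{\ell}\ \le\ \tfrac{1}{3}\bigl(m_{1}+2m_{2}+\cdots+\ell m_{\ell}\bigr)=\tfrac{m}{3},
\]
since $3m_{2}\le m_{1}+2m_{2}$ and $3m_{j}\le jm_{j}$ for $j\ge 3$ (and symmetrically when $m_{2}>m_{1}$). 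This single inequality delivers the $q^{n+m/3}$ middle term at once; the companion bound $\sum_{j\ne k}m_{j}\le\tfrac{2}{3}(m-m_{k})$ handles the $q^{m}$ and $q^{2(m+n)/3}$ terms. No reciprocity, no duality, and no iteration are needed.

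Your idea can be repaired, but not by the two-bound min you describe: if after the full $2$-adic decomposition you apply Theorem~\ref{largesieve} to the $A_{j}$ of \emph{largest} degree (rather than to $A_{0}$ or $A_{1}$), then the middle term is $q^{\,n+\sum_{i}a_{i}-\max_{j}a_{j}}$, and one checks directly that $\sum_{i}a_{i}-\max_{j}a_{j}\le m/3$ whenever $\sum_{i}2^{i}a_{i}=m$ (indeed $2a_{0}+a_{1}\le 3\max_{j}a_{j}$ already forces $\sum_i(3-2^i)a_i\le 3\max_j a_j$). This recovers the correct exponent, but the paper's $\ell$-adic route reaches it with a single application of Lemma~\ref{lem6} and no case analysis over $O(\log m)$ levels.
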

In the course of proving Theorem \ref{main_thm}, we will also prove the following result about averages of Gauss sums at prime arguments, which may be of independent interest. Let $G_{\ell}(V,c)$ denote the $\ell^{\text{th}}$ order Gauss sum over function fields, defined in \eqref{def-GS}. 

\begin{theorem}
\label{theorem_cancellation}
Let $\mathcal{P}_n$ denotes the set of monic, irreducible polynomials of degree $n$ in $\mathbb{F}_q[t]$.
For $\ell>8$ and any $\varepsilon>0$, we have that
$$ \sum_{\pi \in \mathcal{P}_n} G_{\ell}(R,\pi) \ll q^{n (\frac{3}{2}-\frac{\ell-2}{\ell^2})+\varepsilon n} |R|^{\frac{\ell^2-7\ell-2}{2 \ell^2}+\varepsilon} .$$For $3<\ell\leq 8$, then
$$ \sum_{\pi \in \mathcal{P}_n} G_{\ell}(R,\pi) \ll q^{n (\frac{3}{2}-\frac{\ell-2}{\ell^2})+\varepsilon n} |R|^{\frac{\ell-2}{2 \ell^2}+\varepsilon} .$$
When $\ell=3$, then
$$ \sum_{\pi \in \mathcal{P}_n} G_{\ell}(R,\pi) \ll q^{\frac{4n}{3}+\varepsilon n} |R|^{\varepsilon}  .$$
\end{theorem}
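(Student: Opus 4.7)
The natural approach is to combine a Heath--Brown identity with the large sieve of Theorems~\ref{largesieve}--\ref{largesieve2}. First, I would replace the sum over primes by a weighted sum over all polynomials of degree $n$ using the von Mangoldt function,
$$\sum_{\pi\in\mathcal{P}_n}G_\ell(R,\pi)=\frac{1}{n}\sum_{\deg F=n}\Lambda(F)\,G_\ell(R,F)+O(q^{n/2+\varepsilon n}),$$
where the error absorbs prime powers via the Weil bound $|G_\ell(R,\pi^k)|\le q^{kn/2}$. Then I would apply a level-$K$ Heath--Brown identity to $\Lambda(F)$ (with the smoothness cut-off $z$ a free parameter), producing $O(1)$ multilinear sums that, after grouping variables, reduce to bilinear forms
$$B(m):=\sum_{\deg M=m}\sum_{\deg N=n-m}\alpha_M\,\beta_N\,G_\ell(R,MN),\qquad \alpha_M,\,\beta_N\ll q^{\varepsilon n}.$$

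The next step is to use the twisted multiplicativity of the $\ell$-th order Gauss sum: for coprime squarefree $M$ and $N$,
$$G_\ell(R,MN)=\legl{M}{N}^{-1}\legl{N}{M}^{-1}G_\ell(R,M)\,G_\ell(R,N),$$
up to an explicit root of unity (cleanly controlled because $q\equiv 1\pmod{2\ell}$). The contribution with $(M,N)\neq 1$ loses at least a $q^{-\min(m,n-m)}$-factor and is admissible. Substituting this identity, applying Cauchy--Schwarz in one of the variables and invoking Theorem~\ref{largesieve} or~\ref{largesieve2} yields
$$|B(m)|^2\ll q^{\varepsilon n}\bigl(q^m+q^{n-m}+q^{2n/3}\bigr)\sum_M|G_\ell(R,M)|^2\sum_N|G_\ell(R,N)|^2,$$
with the inner sums controlled by Weil ($|G_\ell(R,F)|^2\le |F|$ when $(R,F)=1$) together with a Parseval--Plancherel step that tracks divisibility of $F$ by $R$. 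This produces the $|R|^{\kappa}$-factor.

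Finally, one chooses $K$ and the balance degree $m$ to optimize. The main obstacle is this optimization across the three regimes. For $\ell\ge 9$ a higher-level Heath--Brown identity is needed to guarantee that every bilinear piece has both variables long enough for the large sieve to beat the trivial bound, forcing the exponent $\tfrac{3}{2}-\tfrac{\ell-2}{\ell^2}$ through the middle large-sieve term $q^{2(m+n)/3}$; for $4\le\ell\le 8$ a lower-level decomposition suffices and a smaller $|R|$-exponent emerges by avoiding lossy applications of Cauchy--Schwarz at the short-variable step. The case $\ell=3$ is genuinely special: one can exploit the Kubota--Patterson theory of cubic theta series to obtain extra cancellation in the residue-symbol bilinear forms, which both removes all but an $|R|^{\varepsilon}$ factor and sharpens the $q$-exponent from $\tfrac{25}{18}$ to $\tfrac{4}{3}$. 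A secondary difficulty is bookkeeping the root-of-unity factors from twisted multiplicativity; these can be absorbed into the coefficients $\alpha_M,\beta_N$ at the cost of only a $q^{\varepsilon n}$ factor.
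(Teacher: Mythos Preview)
Your proposal has a genuine gap: it treats every piece of the Heath--Brown/Vaughan decomposition as a bilinear form amenable to the large sieve, but this is not possible. Any such identity produces \emph{Type I} terms in which one variable (say $b$, with $\deg b\le U$) is short and the complementary variable is a full sum $\sum_{e}G_\ell(R,be)$ over $e$ of degree $n-\deg b$. After twisted multiplicativity this becomes essentially $\sum_{e}G_\ell(R',e)$ twisted by a fixed character---a single long smooth sum with no second rough variable, so Theorems~\ref{largesieve}--\ref{largesieve2} give nothing. In the paper these Type I sums are handled not by the sieve but by the metaplectic machinery of Sections~\ref{gen-series}--\ref{sec:remove}: one writes $F(n,R,b)=\sum_{b\mid c}G_\ell(R,c)$ via Perron as an integral of $\psi_R^{(i)}(R,b,z)$, reduces to the basic series $\psi^{(i)}$ using Theorem~\ref{thm:gettingridofa}, and then applies the convexity bound of Corollary~\ref{cor_convexity}.

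This omission is not cosmetic, because the entire $\ell$-dependence in the theorem comes from the Type I side. The Type~II bound obtained from the large sieve is \eqref{typeII}, namely $q^{3n/2-U/2+\varepsilon n}$, which is independent of $\ell$; it is the convexity input $|\psi_R^{(i)}(R,b,q^{-s})|\ll |b|^{\ell(3/4-\sigma/2)-1}|R|^{\cdots}$ at $\sigma=1+1/\ell$ that forces the balance $U=U(n,\ell,\deg R)$ and produces the exponents $\tfrac{3}{2}-\tfrac{\ell-2}{\ell^2}$ and the two different $|R|$-powers for $\ell>8$ versus $4\le\ell\le8$. For $\ell=3$ the sharpening to $q^{4n/3}|R|^{\varepsilon}$ again comes from the Type~I side, via the exact residue formula for $\psi^{(i)}$ (Theorem~\ref{residue} and Lemma~\ref{new_lemma}) which allows shifting past the pole; it is not obtained from improved bilinear cancellation as you suggest. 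Your ``Parseval--Plancherel step that tracks divisibility by $R$'' is too vague to recover any of this.
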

By Corollary \ref{size-GS},  $|G_{\ell}(R,\pi)|= |\pi|^{1/2}$ for $(\pi, R)=1$. Therefore, Theorem \ref{theorem_cancellation} provides cancellation in averages of the shifted Gauss sums  $G_\ell(R, \pi)$ at prime arguments when $\deg(R)<\frac{2(\ell-2)}{\ell^2-7\ell-2}n$ for $\ell>8$, when
$\deg(R) <  2 n$ for $3<\ell \leq 8$, and for all shifts $R$ for $\ell = 3$. By Weyl's criterion,  this could potentially lead  to the  equidistribution of the angles $\theta_{\ell, R}$ given by
$\frac{G_{\ell}(R,\pi)}{|\pi|^{1/2}} = e^{i \theta_{\ell,R}}$ in those cases, by considering more general generating functions than the functions $\psi^{(i)}_v(r,a,u)$ from Section \ref{sec:remove} to deal with the powers of $G_{\ell}(R,\pi)$.

Over number fields, the equidistribution of the cubic Gauss sums $G_3(\pi) := G_{3}(1,\pi)$ at prime arguments over the Eisenstein field was proven by Heath-Brown and Patterson \cite{hbp} and refined  in \cite{HB00}, falling just short of determining an asymptotic formula conjectured by Patterson for averages of cubic Gauss sums at prime arguments. This was subsequently proved by Dunn and Radziwi\l\l~ \cite{DR}. For the general Gauss sums $G_\ell(\pi)$, an equidistribution result was proven by Patterson \cite{LMS},  and refined  by \cite{DDHL} for $\ell=4$.
All the number field results hold for $R=1$, while the
result in Theorem \ref{theorem_cancellation} provides uniformity in the shift $R$.

\subsection{Main ideas}

To prove the main result, Theorem \ref{main_thm}, we start in the usual way by employing the explicit formula which relates sums over zeros of $L$--functions to sums over monic irreducible polynomials. After removing the square-free condition on the sum over the moduli $c$ which parametrize the family and using Poisson summation, we obtain a sum of Gauss sums over a dual parameter which ranges over monic polynomials $V$.

By switching the sums over the primes and the sum over the dual parameter, we are led to bound sums of Gauss sums over the primes. Since Gauss sums are not multiplicative for $\ell>2$, we cannot use the standard techniques to bound the sums over the primes (i.e., monic irreducible polynomials). Instead, we
use Vaughan's identity to rewrite the sums over primes to sums over monic polynomials with certain divisibility conditions in some ranges. These terms come in two types, Type I and Type II sums. We note that this circle of ideas
was first used in the work of Heath-Brown and Patterson \cite{hbp} in showing the uniform distribution of the angles of cubic Gauss sums around the unit circle.

We then deal with the Type I and Type II terms differently. To bound  the Type II terms, we use the large sieve inequality for characters of order $\ell$, as proved in Theorems \ref{largesieve} and \ref{largesieve2}. This builds on ideas of Heath-Brown \cite{HB95, HB00}, who obtained large sieve inequalities for quadratic and cubic characters over $\mathbb{Q}$, and work of Blomer, Goldmakher, and Louvel \cite{bgl} on the large sieve for (higher) fixed order characters. We obtain a bound on the Type II sums which does not depend on the order $\ell$ of the characters.

Dealing with the Type I terms is more subtle, and is the main focus of the paper. After using Poisson summation and Vaughan's identity as described above, we roughly need to consider sums which can be written in a simplified form as
$$ \sum_{V \in \mathcal{M}_{n-d}} \sum_{b \in \mathcal{M}_{\leq U}} \mu^2(b) \sum_{\substack{c \in \mathcal{M}_n \\ b|c \\ (c,V)=1}} G_{\ell}(V,c),$$ where recall that $G_{\ell}(V,c)$ is the $\ell^{\text{th}}$ order Gauss sum over function fields as defined in \eqref{def-GS}, and $n \leq v(d-1)$, where $v$ is the support of the Fourier transform of the test function $\phi$. In the above, $\mathcal{M}_{\leq U}$ denotes the set of monic polynomials of degree less than or equal to $U$, where $U$ is a parameter which grows with $n$, and $U<n/3$.
We then need to consider the generating series of the Gauss sums with the coprimality and divisibility conditions as above. After (tedious) combinatorial manipulations (see Theorem \ref{thm:gettingridofa}), we express this generating series in terms of the generating series
\[
 \psi^{(i)}(V, u)  := (1-u^{\ell}  q^{\ell})^{-1}\sum_{\substack{F \in \mathcal{M}\\ \deg(F) \equiv i \pmod{\ell} }} G_\ell(V, F) u^{\deg(F)},\]
for $i \in \{0,1,\ldots, \ell-1\}$, with the coprimality and divisibility conditions omitted. We are then led to bound averages of the form
 \begin{equation} 
\label{sumv2}\sum_{V \in \mathcal{M}_{n-d}} | \psi^{(i)}(V,u)|.\end{equation}
Therefore, one needs to understand the properties of $\psi^{(i)}(V,u)$. This function has been studied by Hoffstein and Patterson in the function field setting \cite{patterson, hoffstein}. In the case of quadratic characters ($\ell=2$), the Gauss sums are multiplicative, so the generating series has an Euler product, and its structure is fully understood. However, for $\ell>2$, the Gauss sums are no longer multiplicative, and the current understanding of $\psi^{(i)}(V,u)$ remains incomplete in the case of higher-order Gauss sums. These functions are known to be related to theta functions on the $\ell$-fold metaplectic covers of $\text{SL}(2)$ \cite{hoffstein}.

It can be shown that $\psi^{(i)}(V,u)$ is a rational function (see also \cite{DFL} in the case $\ell=3$) with possible poles at $u^{\ell} = q^{-\ell-1}$. Crucially, the generating series $\psi^{(i)}(V,u)$ satisfies a functional equation which relates $\psi^{(i)}(V,u)$ to $\psi^{(i)}(V; 1/(q^2u))$. From the functional equation and the Phragm\'en--Lindel\"{o}f principle, one can obtain a convexity bound on the size of $\psi^{(i)}(V,u)$ in the critical strip (Theorem \ref{residue}). However, using the pointwise convexity bound is not optimal for $\ell$ sufficiently large, and that would lead to a weaker proportion than that in Theorem \ref{main_thm}.
To get a better proportion, we use the functional equation again to obtain an ``approximate functional equation'' for $\psi^{(i)}(V,u)$. Since $\psi^{(i)}(V,u)$ is a rational function, the numerator of $\psi^{(i)}(V,u)$ is a polynomial whose degree is roughly equal to $\deg(V)$. The approximate functional equation allows us to express the numerator of $\psi^{(i)}(V,u)$ as a sum of two terms, each of length roughly $\deg(V)/2$ (similar to the case of  $L$--functions over number fields). We are now in a position to use the large sieve inequality again and exploit the fact that $(u^\ell - q^{\ell-1}) \psi^{(i)}(V,u)$ is written in terms of shorter polynomials. This yields a Lindel\"{o}f on average bound for \eqref{sumv2}, which beats the trivial bound obtained from convexity. We note that a similar idea was exploited in \cite{ddds} in the number field setting, in bounding the residual integral after extracting the contribution of the pole (in that instance, since the authors are considering the case $\ell = 3$, the contribution of the poles is exact, see Theorem \ref{residue}). In the present work, we use the average bound when we are close to the residue, and not for the residual integral at the central critical point (which, in the case of $\psi^{(i)}(V,u)$, is at $u=1/q$.)

We also need to consider more general averages of the form
\begin{equation} \sum_{V \in \mathcal{M}_{n-d}} \Big|  \psi_V^{(i)} (V,b,u)\Big|,
\label{intro2}
\end{equation} where  
$$ \psi_V^{(i)} (V,b,u)\ = \sum_{\substack{c \in \mathcal{M} \\ b|c \\ (c,V)=1}} G_{\ell}(V,c) u^{\deg(c)}.$$
Using the combinatorial manipulations in Theorem \ref{thm:gettingridofa}, evaluating  \eqref{intro2} roughly reduces to considering sums which can be written in a simplified form as 
\begin{equation*}
\sum_{V \in \mathcal{H}_{n-d}} \Big|  \psi^{(i)}(b^{\ell-2} V^{\ell-3},u)\Big|,
\end{equation*}
where $\mathcal{H}_{n-d}$ denotes the set of monic, square-free polynomials of degree $n-d$. When $\ell$ grows, the high powers of $b$ and $V$ become problematic, and we prove in Section \ref{section_lov} several versions of Lindel\"{o}f on average to be used in different ranges to optimize the bounds.

Finally, we note that simply using the convexity bound in \eqref{intro2} would also lead to a positive proportion of non-vanishing; however, the proportion would be weaker than what we prove using Lindel\"{o}f on average for \eqref{intro2} (for $\ell\geq 11$). While we believe an appropriate version of the Lindel\"{o}f on average bound should give better final error terms than the convexity bound for small values of $\ell$ as well (i.e., $\ell <11$), we have decided not to optimize that, in order to keep the article at a reasonable length.

\subsection{Overview of this article}
This article is organized as follows. In Section \ref{background}, we gather various facts about order $\ell$ characters and their $L$--functions. We introduce the order $\ell$ Gauss sums in Section \ref{gauss-sums-section}, and prove some of their properties. In Section \ref{gen-series}, we gather some necessary facts from \cite{DFL, hoffstein, patterson} about the generating series of the Gauss sums, and prove the approximate functional equation. The main result in Section \ref{sec:remove} expresses the generating series of Gauss sums with coprimality and divisibility relations in terms of the usual generating series. Section \ref{section_lov} proves several Lindel\"{o}f on average-type bounds for the generating series of the Gauss sums. We begin the proof of Theorem \ref{main_thm} in Section \ref{section_setup}, where we use Vaughan's identity for the sums over primes. We bound the Type II terms  in Section \ref{typeII_section}, and we deal with the Type I terms in Section \ref{typeI}. We finally prove Theorem \ref{main_thm} in Section \ref{proof}, Theorem \ref{theorem_cancellation}
 in Section \ref{proof-thm-cancellation}, and  the large sieve inequalities in Section \ref{ls}.

\subsection*{Acknowledgements.}
The authors would like to thank Crystel Bujold for useful discussions related to this paper, and Mark Shusterman for helpful comments. CD thanks Alex Dunn for helpful discussions, and AF also thanks K.  Soundararajan for many discussions related to the quadratic large sieve over function fields, which helped with the section on the order $\ell$ large sieve in this paper.
This research has been supported by the National Science Foundation (DMS-2101769 and CAREER grant DMS-2339274 to AF), the Natural Sciences and Engineering Research Council of Canada (RGPIN-2019-05536 to CD and  RGPIN-2022-03651 to ML) and the Fonds de recherche du Qu\'ebec - Nature et technologies (Projets de recherche en \'equipe 300951 to CD and ML, and 345672 to CD and ML).

\section{Background and setting}
\label{background}
\subsection{Generalities about characters} For references on fixed order characters, see \cite{ldlw}. Let $q$ be a power of an odd prime $p$. We denote by $\mathcal{M}$ the set of monic polynomials of $\F_q[t]$, by $\mathcal{M}_{n}$ the set of monic polynomials of degree exactly $n$, by
$\mathcal{M}_{\leq n}$ the set of monic polynomials of degree smaller than or equal to $n$,  
 by $\mathcal{H}$ the set of monic square-free polynomials of $\F_q[t]$ and analogously for
 $\mathcal{H}_{n}$ and $\mathcal{H}_{\leq n}$. Note that $| \mathcal{M}_{n}| = q^n$ and for $n \geq 2$, we have that $| \mathcal{H}_{n}| = q^n (1-\tfrac{1}{q}).$
 Similarly define the sets $\mathcal{P}, \mathcal{P}_{n}$ and $\mathcal{P}_{\leq n}$ to be the corresponding sets consisting of monic irreducible polynomials in $\mathbb{F}_q[t]$. For simplicity, whenever we talk about primes in $\F_q[t]$, we refer to monic irreducible polynomials.

We define the norm of a polynomial $f(t) \in \F_q[t]$ over $\F_q[t]$ by
$$ |f| := q^{\deg(f)}.
$$

Let $\ell$ be such that $p\nmid \ell$.  We consider the Kummer case, in other words, $q\equiv 1 \pmod{\ell}$. We fix  an isomorphism $\Omega$ from the $\ell$-th roots of unity in $\F_{q}^*$ to the subgroup $\mu_\ell$ of $\ell$-th roots of unity in $\C^*$.

Let $\pi \in \mathcal{P}$, and let $a \in \F_q[t]$. Then the $\ell^{\text{th}}$ residue symbol modulo $\pi$ is constructed as follows. If $\pi|a$, then we define $(\frac{a}{\pi})_{\ell}=0$. If $\pi \nmid a$, we define
$$\legl{a}{\pi} =\alpha,$$ where $\alpha $ is the unique $\ell^{\text{th}}$ root of unity in $\mu_{\ell}$ such that
\begin{align*} 
 a^{\frac{q^{\deg(\pi)}-1}{\ell}} \equiv \Omega^{-1}(\alpha) \pmod \pi. \end{align*}
We extend the symbol by multiplicativity for $c=\pi_1^{e_1} \dots \pi_k^{e_k}$ by
\begin{align*}
\legl{a}{c} = \legl{a}{\pi_1}^{e_1} \dots \legl{a}{\pi_k}^{e_k}.
\end{align*}
We also denote
\begin{align} \label{chi-c-notation} 
\chi_c(\cdot) := \legl{\cdot}{c}.\end{align}

Notice that if $\alpha \in \F_q$, we have 
\begin{align*} 
\legl{\alpha}{\pi} = \alpha^{\frac{q-1}{\ell} \deg(\pi)}.
\end{align*}
We will often use the $\ell^{\text{th}}$ power reciprocity law, which says the following. For $A, B \in \cM$, 
\begin{align*}
\legl{A}{B} \legl{B}{A}^{-1} = (-1)^{\frac{q-1}{\ell} \deg(A) \deg(B)}.
\end{align*}
Throughout the paper, we will assume that $q \equiv 1 \pmod{2\ell}$, so
that $\legl{A}{B} =\legl{B}{A}$.

Each character $\chi_c$ with $c$ square-free of degree $d$ is associated to a
 cover $C$  over  $\mathbb{P}^1_{\F_q}$ with affine equation
$C: y^\ell = c(t)$.  Denoting by $P_\infty$ the prime at infinity, there are two cases:
\begin{itemize}
 \item[] If $\ell \mid d$, then  $\chi$ is even (that is, $\chi$ is trivial on $\F_q$) and  $\chi_c(P_\infty)=1$.
 \item[] If $\ell \nmid d$, then $\chi$ is odd (that is, $\chi$ is non-trivial on $\F_q$) and $\chi_c(P_\infty)=0$.
  \end{itemize}
By the Riemann--Hurwitz formula, the genus $g$ of $C$ is given by
\[g=-(\ell-1)+\frac{1}{2}\sum_P(e_P-1),\]
where the sum is taken over all the $P$ in the extension that ramify and $e_P$ denotes the ramification index. In the particular case where $\ell$ is prime or when $c$ is square-free (see Corollary  3.7.4 in \cite{Stichtenoth}), the above formula reduces to
 \begin{equation}
 \label{genus}
 g=\frac{\ell-1}{2}(d-1-\chi_c(P_\infty)).
 \end{equation}
  
Let $\chi$ be any of the characters defined by \eqref{chi-c-notation}, and let $\mathcal{L}(u,\chi)$ be the Dirichlet $L$--function defined by
\begin{equation*}
\mathcal{L}(u,\chi) = \sum_{f \in \mathcal{M}} \chi(f) u^{\deg(f)}.
\end{equation*}
The $L$--function also has an Euler product given by
\begin{equation}\label{eq:Eulerprod}\mathcal{L}(u, \chi)=\prod_P (1-\chi(P)u^{\deg(P)})^{-1},\end{equation}
where the product includes the prime at infinity, and the above holds for $|u|<1/q$.

In this work, we consider $\chi \in \mathcal{F}_{\ell}(d)$, where $\mathcal{F}_{\ell}(d)$ is the family given in equation \eqref{family}.
For such a primitive character $\chi$, it follows from the work of Weil \cite{Weil2} that when $\ell$ is prime,  $\cL( u,\chi)$ is a polynomial of degree $d-1-\chi(P_\infty)$ by \eqref{genus}. In the case where $c$ is square-free (and $\ell$ is not necessarily prime), there are $\ell-1$ $L$--functions associated to the characters $\chi_c, \dots, \chi_c^{\ell-1}$ respectively. By Proposition 4.3 in \cite{Rosen}, each of them has degree at most $d-1-\chi(P_\infty)$. By \eqref{genus}, the total degree must be $2g=(\ell-1)(d-1-\chi(P_\infty))$, and therefore each $\cL( u,\chi)$ has degree exactly $d-1-\chi(P_\infty)$ in this case as well. Recall that we are taking $d \not \equiv 0 \pmod{\ell}$ and therefore $\chi$ is odd and $\chi(P_\infty)=0$.

Thus, we can  write
\begin{equation}\label{eq:rootsprod}\cL( u,\chi)=\prod_{j=1}^{d-1} (1-u \sqrt{q} e^{-2\pi i \theta_{\chi,j}}).\end{equation}
Moreover, $\cL( u,\chi)$
 satisfies the functional equation 
 \begin{align*} 
\cL(u,\chi) = \omega(\chi) \;(qu^2)^{\frac{d-1}{2}} \;\cL\left(\frac{1}{qu}, \overline{\chi} \right). \end{align*}
The sign of the functional equation is given by
\begin{equation}\label{eq:signfe}
\omega(\chi) = \begin{cases}-q^{(d-2)/2}\sum_{f\in \mathcal{M}_{d-1}\chi(f)}= \frac{G(\chi)}{|G(\chi)|}  & \text{when $\chi$ is even}, \\     \\
q^{(d-1)/2}\sum_{f\in \mathcal{M}_{d-1}}\chi(f)= \frac{\sqrt{q}}{\tau(\chi)}  \frac{G(\chi)}{|G(\chi)|}  & \text{when $\chi$ is odd},
\end{cases}
\end{equation}
where  if 
$\chi$ odd, 
\begin{equation}
\label{epsilonfactor}
\tau(\chi) = \sum_{a \in \F_q^*} \chi(a) e^{2 \pi i \text{Tr}_{\F_q/\F_p}(a)/p}, \;\;\;\; \varepsilon(\chi) = \frac{\tau(\chi)}{q^{1/2}},
\end{equation}
and if $\chi$ even, $ \varepsilon(\chi) =1$.

For any $\chi$, the Gauss sum is given by
$$
G(\chi) := \sum_{a \pmod{F}} \chi(a) e_q\left( \frac{a}{F} \right).$$
Here $e_q$ is the exponential defined by Hayes \cite{hayes} for any $b \in \mathbb{F}_q((1/t))$ by
\begin{equation}
\label{exp}
e_q(b) = e^{\frac{2 \pi i \text{Tr}_{\F_q/\F_p}(b_1) }{p}},
\end{equation}
where $b_1$ is the coefficient of $1/t$ in the Laurent expansion of $b$.

We have the following Weil bound for sums over primes. If $\chi$ is a primitive character modulo $Q$ and $\deg(Q)=m$, then
\begin{equation}
\label{weil}
\Big| \sum_{\pi \in \mathcal{P}_n} \chi(\pi) \Big| \ll q^{n/2} \frac{m}{n}.
\end{equation}
 (See equation (2.5) in \cite{rudnick}; the bound there is stated for quadratic characters, but it is easily generalized to any primitive characters.)

We will also use the P\'olya--Vinogradov theorem in function fields (see \cite[ Proposition $2.1$]{hsu}). Namely, for $\chi$ a nonprincipal character modulo $Q$, we have that
\begin{equation}
\label{pv}
\Big|  \sum_{f \in \mathcal{M}_n} \chi(f) \Big| \ll \sqrt{|Q|}.
\end{equation}

\section{Order $\ell$ Gauss sums}
\label{gauss-sums-section}

Let $c, r \in \cM$. We define the (shifted) Gauss sum
\begin{align} \label{def-GS}
G_\ell(r,c) &:=\sum_{a \pmod{c}} \Big( \frac{a}{c} \Big)_\ell \; {e_q} \Big( \frac{r a}{c} \Big),
\end{align}
where $e_q$ is defined in \eqref{exp}.
More generally, we also define for any integer $j$,
\begin{align*} 
G_{\ell, j}(r,c) &:=\sum_{a \pmod{c}} \legl{a}{c}^j \; {e_q} \Big( \frac{r a}{c} \Big).
\end{align*}

We will also use the following alternate notation for more general characters $\chi$ modulo $c$, namely,
\[
G(r,\chi)=\sum_{a \pmod{c}}\chi(a) \; {e_q} \Big( \frac{r a}{c} \Big).
\]

If $(b,c)=1$, then it is easy to see that 
\begin{equation}\label{eq:GS}G_{\ell,j}(br,c)= \overline{\legl{b}{c}^j} G_{\ell,j}(r,c).\end{equation}

\begin{lemma} \label{gauss}
Let $r \in \F_q[t]$, and $c_1, c_2 \in \cM$.
\begin{itemize}
 \item[(i)] If $(c_1 ,c_2)=1$, then 
\begin{eqnarray*} G_{\ell,j}(r,{c_1 c_2}) 
&=& \legl{c_2}{c_1}^{2j} G_{\ell,j}(r, c_1 )G_{\ell,j}(r, {c_2}) \\
&=& G_{\ell,j}(rc_2^{\ell-2}, c_1) G_{\ell,j}(r,c_2). \end{eqnarray*}
\item[(ii)] If $r=\tilde{r} \pi^\alpha$ where $\pi \nmid \tilde{r}$, then
\[G_{\ell,j}(r, \pi^i) = \left\{\begin{array}{ll}
0 & \mbox{ if }i \leq \alpha \mbox{ and } ij \not \equiv 0 \pmod{\ell},\\
                      \phi(\pi^{i})& \mbox{ if }i \leq \alpha \mbox{ and } ij \equiv 0 \pmod{\ell},\\
                        -q^{(i-1)\deg(\pi) }&\mbox{ if }i=\alpha+1 \mbox{ and } ij \equiv 0 \pmod{\ell},\\
                       \varepsilon(\chi_\pi^{ij})  \omega(\chi_\pi^{ij}) \legl{\tilde{r}}{\pi^i}^{-1}  q^{(i-\frac{1}{2})\deg(\pi) } &\mbox{ if }i=\alpha+1 \mbox{ and }  ij \not \equiv 0 \pmod{\ell},\\
                                               0 & \mbox{ if }i \geq \alpha+2,
                         \end{array}
 \right.
\]
\end{itemize}
where $\phi$ is the Euler $\phi$-function for polynomials and we recall that $\varepsilon(\chi)=1$ when $\chi$ is even and $\varepsilon(\chi)$ is given in \eqref{epsilonfactor} for $\chi$ odd. We note that $\chi_{\pi}^{ij}$ is even if $\ell \mid \deg(\pi^{ij})$.  \end{lemma}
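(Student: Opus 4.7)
My plan for part (i) is to apply the Chinese Remainder Theorem. Writing $a = c_2 a_1 + c_1 a_2$ with $a_1 \pmod{c_1}$, $a_2 \pmod{c_2}$, the ratio splits as $\frac{ra}{c_1 c_2} = \frac{r a_1}{c_1} + \frac{r a_2}{c_2}$, so additivity of $e_q$ factors the exponential. Multiplicativity of the residue symbol gives
\[
\legl{a}{c_1 c_2}^j = \legl{c_2 a_1}{c_1}^j \legl{c_1 a_2}{c_2}^j = \legl{c_2}{c_1}^j \legl{c_1}{c_2}^j \legl{a_1}{c_1}^j \legl{a_2}{c_2}^j .
\]
Under the standing hypothesis $q \equiv 1 \pmod{2\ell}$, reciprocity collapses $\legl{c_1}{c_2}^j \legl{c_2}{c_1}^j$ to $\legl{c_2}{c_1}^{2j}$, giving the first equality. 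The second equality then follows from \eqref{eq:GS} applied to $G_{\ell,j}(rc_2^{\ell-2}, c_1)$, together with $\legl{c_2}{c_1}^{-(\ell-2)j} = \legl{c_2}{c_1}^{2j}$ (which holds since $\legl{c_2}{c_1}^\ell = 1$).

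For part (ii), the first two cases are direct evaluations. If $i \leq \alpha$, then $ra/\pi^i = \tilde r \pi^{\alpha-i} a$ is a polynomial for every $a$, so $e_q = 1$ and the sum reduces to $\sum_{a \pmod{\pi^i}} \legl{a}{\pi}^{ij}$. When $ij \equiv 0 \pmod{\ell}$ this counts units, giving $\phi(\pi^i)$; otherwise, partitioning according to $a \pmod{\pi}$ shows each class has $q^{(i-1)\deg\pi}$ lifts, and the induced character sum $\sum_{a_0 \pmod{\pi}} \legl{a_0}{\pi}^{ij}$ vanishes. If $i = \alpha + 1$, write $a = a_0 + b\pi$ with $a_0 \pmod{\pi}$ and $b \pmod{\pi^{i-1}}$; then $e_q(\tilde{r}a/\pi) = e_q(\tilde{r}a_0/\pi)$ (the $b$ term is a polynomial) and the symbol depends only on $a_0$, yielding
\[
G_{\ell,j}(r,\pi^i) = q^{(i-1)\deg\pi} \sum_{a_0 \pmod{\pi}} \legl{a_0}{\pi}^{ij} e_q(\tilde{r}a_0/\pi) .
\]
When $ij \equiv 0 \pmod{\ell}$ the inner sum is the complete nontrivial additive character sum minus the $a_0 = 0$ term, hence equals $-1$. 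Otherwise it is the standard Gauss sum $G(\tilde r, \chi_\pi^{ij}) = \overline{\chi_\pi^{ij}(\tilde r)}\, G(\chi_\pi^{ij})$ by \eqref{eq:GS}, and inserting $G(\chi_\pi^{ij}) = \varepsilon(\chi_\pi^{ij}) \omega(\chi_\pi^{ij}) q^{\deg\pi/2}$ (obtained from \eqref{eq:signfe}, checking the even and odd cases separately, with $\varepsilon = 1$ when even) produces the stated formula.

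The only substantive case is $i \geq \alpha + 2$, where I plan to show vanishing via a multiplicative substitution. For any $c \pmod{\pi}$, let $b = 1 + c\pi^{i-\alpha-1}$; since $b \equiv 1 \pmod{\pi}$, $b$ is a unit modulo $\pi^i$ with $\legl{b}{\pi^i}^j = 1$. Because $a \mapsto ab$ is a bijection of $\F_q[t]/\pi^i$ preserving the symbol factor, substitution together with $\frac{\tilde r a b}{\pi^{i-\alpha}} = \frac{\tilde r a}{\pi^{i-\alpha}} + \frac{\tilde r a c}{\pi}$ gives
\[
G_{\ell,j}(r,\pi^i) = \sum_{a \pmod{\pi^i}} \legl{a}{\pi^i}^j e_q(\tilde{r}a/\pi^{i-\alpha})\, e_q(\tilde{r}ac/\pi)
\]
for every $c$. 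Averaging over $c \pmod{\pi}$ introduces the factor $\sum_c e_q(\tilde{r}ac/\pi)$, which vanishes for every $a$ coprime to $\pi$ (since $\tilde r \not\equiv 0 \pmod{\pi}$); terms with $\pi \mid a$ already vanish via the symbol. Hence $G_{\ell,j}(r,\pi^i) = 0$. The only real point is recognizing the right multiplicative shift: $b$ must be congruent to $1 \pmod \pi$ to neutralize the symbol, and its $\pi$-valuation must be tuned so that the perturbation of the exponential lives exactly at the $1/\pi$ level and triggers cancellation.
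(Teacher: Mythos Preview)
Your proof is correct and follows essentially the same route as the paper's: the CRT decomposition for part~(i), the split $a = a_0 + b\pi$ for the $i=\alpha+1$ case, and direct character orthogonality for $i\le\alpha$ all match. For $i\ge\alpha+2$ the paper simply cites \cite{Florea1}, whereas you supply an explicit and clean multiplicative-shift argument ($a\mapsto a(1+c\pi^{i-\alpha-1})$ followed by averaging over $c$); this is a standard trick and works exactly as you describe. One small remark: your computation in the $i=\alpha+1$, $ij\not\equiv 0\pmod\ell$ case actually yields the factor $\legl{\tilde r}{\pi^i}^{-j}$ rather than $\legl{\tilde r}{\pi^i}^{-1}$; the exponent $-1$ in the lemma statement (and in the paper's displayed conclusion) appears to be a typo carried over from the $j=1$ case, which is the only one used elsewhere in the paper.
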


\begin{proof} 
The proof of (i) follows the same lines as in  \cite{Florea1} and Lemma 2.12 in \cite{DFL}.
After writing $a \pmod {c_1c_2}$ as $a=a_1 c_1 + a_2 c_2$ for $a_1 \pmod {c_2}$ and $a_2 \pmod {c_1}$, we have,
\begin{align*}
G_{\ell,j}(r, c_1c_2)=&\legl{c_1}{c_2}^j  \legl{c_2}{c_1}^j   \sum_{a_1 \pmod{c_2}}\sum_{a_2 \pmod{c_1}}
\legl{a_2}{c_1}^j \legl{a_1}{c_2}^je_q\left(\frac{a_1r}{c_2}\right)e_q\left(\frac{a_2r}{c_1}\right)\\
=& \legl{c_2}{c_1} ^{2j} G_{\ell,j}(r, c_1)G_{\ell,j}(r, c_2),
\end{align*}
by reciprocity. The second line of (i) follows from \eqref{eq:GS}.

To prove (ii), first we assume that $i \leq \alpha$. Then
$$G_{\ell,j}(r,\pi^i)= \sum_{a \pmod {\pi^i}} \legl{a}{\pi^i}^je_q(a\tilde{r} \pi^{\alpha-i}).$$
The exponential above is equal to $1$ since $a\tilde{r}\pi^{\alpha-i} \in \F_q[t]$, and if $ij \equiv 0 \pmod{\ell}$, then $\legl{a}{\pi^i}^j=1$ when $(a,\pi)=1$ and therefore the character is even. The conclusion  follows directly  in this case. If $ij \not \equiv 0 \pmod \ell$, the character is odd and the conclusion follows from orthogonality of characters.

Now assume that $i = \alpha+1$. Write $a \pmod {\pi^i}$ as $a = \pi A+C$, with $A \pmod {\pi^{i-1}}$ and $C \pmod \pi$. Then
$$G_{\ell,j}(r, \pi^i) = \sum_{A \pmod {\pi^{i-1}}} \sum_{C \pmod \pi} \legl{C}{\pi^i}^j e_q\left( \frac{C\tilde{r}}{\pi} \right )= q^{(i-1)\deg(\pi) } \legl{\tilde{r}^{-1}}{\pi^i}^j \sum_{C \pmod \pi}  \legl{C}{\pi^i}^j e_q\left( \frac{C}{\pi}\right ).$$
If $ij \equiv 0 \pmod \ell$, then $ \legl{\tilde{r}^{-1}}{\pi^i}^j = 1$ and
$$  \sum_{C \pmod \pi}  \legl{C}{\pi^i}^j  e_q\left( \frac{C}{\pi}\right ) = \sum_{\substack{C  \pmod \pi \\ C \neq 0}}  e_q\left( \frac{C}{\pi}\right )  = -1,$$ and the conclusion follows. Now assume that $ij \not \equiv 0 \pmod \ell$.
Then
\[ \sum_{C \pmod \pi} \legl{C}{\pi^i}^j  e_q\left( \frac{C}{\pi}\right ) = \left\{\begin{array}{ll}-q \sum_{f \in \mathcal{M}_{\deg(\pi)-1}} \legl{f}{\pi^i}^j  & \ell\mid ij \deg(\pi), \\
           \tau(\chi_\pi^{ij})\sum_{f \in \mathcal{M}_{\deg(\pi)-1}} \legl{f}{\pi^i}^j  &\ell \nmid ij \deg(\pi).
\end{array} \right. \]
(See equation (3.5) in \cite{Florea1}.) Using \eqref{eq:signfe}, we can rewrite this as
\[\sum_{C \pmod \pi} \legl{C}{\pi^i}^j  e_q\left( \frac{C}{\pi}\right ) = \left\{\begin{array}{ll}\omega(\chi_\pi^{ij})q^{\deg(\pi)/2} & \ell\mid ij\deg(\pi), \\
          \varepsilon(\chi_\pi^{ij}) \omega(\chi_\pi^{ij})q^{\deg(\pi)/2} &\ell \nmid ij\deg(\pi). \end{array} \right.\]
(See Lemma 2.3 and Corollary 2.4 in \cite{DFL}.)

Thus, we get 
\[G_{\ell,j}(r, \pi^i) = \left\{\begin{array}{ll}\omega(\chi_\pi^{ij}) \legl{\tilde{r}^{-1}}{\pi^i}  q^{(i-\frac{1}{2})\deg(\pi) }   &\ell \mid ij\deg(\pi),\\ \\ \varepsilon(\chi_\pi^{ij}) \omega(\chi_\pi^{ij}) \legl{\tilde{r}^{-1}}{\pi^i}  q^{(i-\frac{1}{2})\deg(\pi) }   &\ell \nmid ij \deg(\pi). \end{array} \right.\]

If $ i \geq \alpha+2$, then again the proof goes through exactly as in \cite{Florea1}.
\end{proof}

\begin{corollary} \label{size-GS} If $(c, r)=1$, then $|G_\ell(r, c)| = \mu^2(c) |c|^{1/2}$.
\end{corollary}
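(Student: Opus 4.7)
The plan is to reduce to the case of prime power moduli via the multiplicativity statement in Lemma \ref{gauss}(i), and then read off the absolute value from the explicit formulas in Lemma \ref{gauss}(ii).

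First I would handle the square-free case. Write $c = \pi_1 \pi_2 \cdots \pi_k$ with the $\pi_i$ distinct monic irreducibles. Iteratively applying Lemma \ref{gauss}(i) with $j=1$ gives
\[
G_\ell(r,c) = \eta \prod_{i=1}^{k} G_\ell(r,\pi_i),
\]
where $\eta$ is a product of factors of the form $\legl{\cdot}{\cdot}^{2}$, each of modulus $1$, so $|\eta|=1$. For each prime factor $\pi_i$, since $(c,r)=1$ we have $\pi_i \nmid r$, so in the notation of Lemma \ref{gauss}(ii) we have $\alpha = 0$, and we are in the case $i=1=\alpha+1$ with $ij = 1 \not\equiv 0 \pmod{\ell}$. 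The lemma then gives
\[
G_\ell(r,\pi_i) = \varepsilon(\chi_{\pi_i})\,\omega(\chi_{\pi_i})\, \legl{\tilde r}{\pi_i}^{-1} q^{\deg(\pi_i)/2},
\]
and since $|\varepsilon(\chi_{\pi_i})|=|\omega(\chi_{\pi_i})|=|\legl{\tilde r}{\pi_i}^{-1}|=1$, we obtain $|G_\ell(r,\pi_i)| = |\pi_i|^{1/2}$. Multiplying over $i$ yields $|G_\ell(r,c)| = |c|^{1/2} = \mu^2(c)|c|^{1/2}$.

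Next I would show $G_\ell(r,c) = 0$ when $c$ is not square-free, matching the factor $\mu^2(c)=0$. If $\pi^2 \mid c$, factor $c = \pi^i m$ with $i \geq 2$ and $(\pi,m)=1$. By Lemma \ref{gauss}(i),
\[
G_\ell(r,c) = \legl{m}{\pi^i}^{2} G_\ell(r,\pi^i)\, G_\ell(r,m).
\]
Since $(r,c)=1$ implies $\pi \nmid r$, again $\alpha = 0$, so $i \geq 2 = \alpha+2$, and the last case of Lemma \ref{gauss}(ii) forces $G_\ell(r,\pi^i) = 0$. Hence $G_\ell(r,c)=0$, completing the proof.

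No step here should pose a serious obstacle, as everything has been set up by Lemma \ref{gauss}; the only mild care needed is verifying that $ij \not\equiv 0 \pmod{\ell}$ with $i=j=1$ for every $\ell \geq 2$, which is immediate, and tracking that all the unimodular factors introduced by reciprocity and by the evaluation at $i=\alpha+1$ really do have modulus $1$.
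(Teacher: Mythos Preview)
Your proof is correct and is exactly the intended derivation: the paper states this as an immediate corollary of Lemma~\ref{gauss} without further argument, and your use of part~(i) for multiplicativity together with part~(ii) at prime powers (cases $i=\alpha+1$ and $i\geq \alpha+2$ with $\alpha=0$) is precisely how one unpacks that.
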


\begin{lemma} (Poisson Summation)
\label{poisson}Let $\chi$ be a character modulo a monic polynomial $f$, and let $m$ be a positive integer.
If $\deg(f) \equiv 0 \pmod \ell$, then
$$\sum_{h \in \mathcal{M}_m} \chi(h) = \frac{q^m}{|f|} \Big[  G_{\ell}(0,\chi)+ (q-1) \sum_{V \in \mathcal{M}_{\leq \deg(f)-m-2}} G_{\ell}(V,\chi) - \sum_{V \in \mathcal{M}_{\deg(f)-m-1}} G_{\ell}(V,\chi)\Big].$$
If $\deg(f) \not\equiv 0 \pmod \ell$,  then 
$$\sum_{h \in \mathcal{M}_m} \chi(h)= \frac{q^{m+1/2}}{|f|} \overline{\varepsilon(\chi)} \sum_{V \in \mathcal{M}_{\deg(f)-m-1}} G_{\ell}(V,\chi).$$
\end{lemma}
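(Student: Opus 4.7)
The plan is to derive this Poisson-type formula by Fourier-expanding $\chi(h)$ on the additive group $\mathbb{F}_q[t]/f$ and then computing the resulting inner exponential sum over $h$ directly from the definition \eqref{exp} of $e_q$.

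First I would write, using orthogonality of additive characters modulo $f$,
\[
\chi(h) = \frac{1}{|f|} \sum_{V \bmod f} G(V, \chi) \, e_q(-Vh/f),
\]
where $V$ ranges over representatives with $\deg V < \deg f =: n$. Summing over $h \in \mathcal{M}_m$ and swapping the order of summation reduces the problem to evaluating
\[
T(V) := \sum_{h \in \mathcal{M}_m} e_q(-Vh/f).
\]
For $V \neq 0$, write the Laurent expansion $V/f = \sum_{j \geq 1} w_j t^{-j}$, which begins with $w_{n - \deg V} \neq 0$. For $h = t^m + h_{m-1} t^{m-1} + \cdots + h_0$, the coefficient of $t^{-1}$ in $Vh/f$ is $w_{m+1} + \sum_{k=0}^{m-1} w_{k+1} h_k$. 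Then applying orthogonality of additive characters on $\mathbb{F}_q$ in each variable $h_k$, I obtain
\[
T(V) =
\begin{cases}
q^m & \text{if } V = 0 \text{ or } \deg V \leq n - m - 2,\\
q^m \, e^{-2\pi i \, \mathrm{Tr}_{\mathbb{F}_q/\mathbb{F}_p}(\alpha)/p} & \text{if } \deg V = n - m - 1 \text{ with leading coeff } \alpha,\\
0 & \text{if } \deg V \geq n - m.
\end{cases}
\]

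Next, for each degree range I would decompose $V = \alpha V'$ with $\alpha \in \mathbb{F}_q^*$ and $V' \in \mathcal{M}_{\deg V}$, and use the easy identity $G(\alpha V', \chi) = \overline{\chi}(\alpha) G(V', \chi)$ (which follows from \eqref{eq:GS}). Now split on parity. If $\ell \mid \deg f$ then $\chi$ is even so $\sum_{\alpha \in \mathbb{F}_q^*} \overline{\chi}(\alpha) = q-1$, and the middle-degree contribution picks up the factor $\sum_{\alpha \in \mathbb{F}_q^*} e^{-2\pi i \, \mathrm{Tr}(\alpha)/p} = -1$ from the trivial restriction; collecting the three degree ranges yields the first displayed identity. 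If $\ell \nmid \deg f$ then $\chi$ is nontrivial on $\mathbb{F}_q^*$, so $G(0,\chi) = 0$ and the full degree bands $\deg V \leq n - m - 2$ contribute zero; only the $\deg V = n-m-1$ band survives, producing the factor
\[
\sum_{\alpha \in \mathbb{F}_q^*} \overline{\chi}(\alpha) \, e^{-2\pi i \, \mathrm{Tr}(\alpha)/p} \;=\; \overline{\tau(\chi)} \;=\; q^{1/2} \, \overline{\varepsilon(\chi)},
\]
which after insertion gives the claimed formula in the odd case.

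The main technical step is the explicit evaluation of $T(V)$ via the Laurent expansion; the rest is bookkeeping. Tracking which band of degrees survives, together with keeping the sign $\overline{\varepsilon(\chi)}$ correctly identified through the substitution $\alpha \mapsto -\alpha$ relating $\sum \overline{\chi}(\alpha) e^{-2\pi i \mathrm{Tr}(\alpha)/p}$ to the classical Gauss sum $\tau(\chi)$, is the only delicate bit.
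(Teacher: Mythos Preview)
Your argument is correct and is essentially the standard Poisson summation proof that the paper defers to \cite{Florea1} and \cite{DFL}: expand $\chi$ in additive characters modulo $f$, evaluate the inner sum $T(V)$ via the Laurent expansion of $V/f$, and split $V=\alpha V'$ to reduce to monic $V'$. One small remark: for the odd case you do not need the substitution $\alpha\mapsto-\alpha$; the identity $\sum_{\alpha\in\F_q^*}\overline{\chi}(\alpha)e^{-2\pi i\,\mathrm{Tr}(\alpha)/p}=\overline{\tau(\chi)}=q^{1/2}\overline{\varepsilon(\chi)}$ follows directly by complex conjugation of the definition \eqref{epsilonfactor}.
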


\begin{proof}
The proof follows the proof of Proposition $3.1$ in \cite{Florea1}. See also the proof of Proposition $2.13$ in \cite{DFL}.
\end{proof}
\section{The generating series for order $\ell$ Gauss sums}
\label{gen-series}
For $r \in \F_q[t]$, we define
\begin{align*}
 \psi^{(i)}(r,u)  := (1-q^{\ell}u^\ell)^{-1}\sum_{\substack{F \in \cM\\ \deg(F) \equiv i \pmod{\ell}}} G_\ell(r, F) u^{\deg(F)}.
\end{align*}
With the change of variables $u=q^{-s}$ and by abuse of notation, we will also use the alternate definition
\begin{align*}
 \psi^{(i)}(r,s)  := (1-q^{\ell-\ell s})^{-1}\sum_{\substack{F \in \cM\\ \deg(F) \equiv i \pmod{\ell}}} G_\ell(r, F) |F|^{-s}.
\end{align*}

We will gather here some facts from \cite{DFL} about  the generating series $\psi^{(i)}(r; u)$ that will be needed throughout this article.

We have that $\psi^{(i)}(r,u)$ satisfies a functional equation (see Proposition 2.1 in \cite{hoffstein} as well as Proposition $3.2$ and equation $(25)$ in \cite{DFL}). Namely, we have the following:
\begin{align}
\label{funct_eq}
(1-q^{\ell+1} u^{\ell}) \psi^{(i)}(r,u) =(qu)^{\deg(r)}\Big[a_1(u) \psi^{(i)} \Big(r; \frac{1}{q^2u} \Big)+a_2(u) \psi^{([\deg(r)+1-i]_{\ell})}\Big( r;  \frac{1}{q^2u} \Big) \Big],
\end{align}
where $$a_1(u) = -q^2u (qu)^{-[\deg(r)+1-2i]_{\ell}}(1-q^{-1})$$ and $$a_2(u)= -W_{r,i} (qu)^{1-\ell} (1-q^{\ell} u^{\ell}),$$
and $W_{r,i}=\tau(\chi_\ell^{2i-1}\overline{\chi}_r)$, with $\chi_\ell(\alpha):=\Omega^{-1}(\alpha^\frac{q-1}{\ell})$.
From \cite{DFL}, equation $(26)$, we know that
\begin{equation}
\label{psi-formula}
\psi^{(i)}(r,u) = \frac{u^i P(r,i,u^{\ell})}{1-q^{\ell+1} u^{\ell}},
\end{equation} where 
$P(r,i,x)$ is a polynomial of degree at most $[(1+\deg(r)-i)/\ell]$ in $x$.
We let 
\begin{equation}C(r, k) = \sum_{F \in \mathcal{M}_{k}} G_{\ell}(r, F).\label{ccoeff}
\end{equation}
Then, for any $B > [(1+\deg(r)-i)/\ell]$, we have
\begin{eqnarray}
 P(r,  i, x) 
\label{formula-for-P}
&=& \frac{1-q^{\ell+1}x}{1-q^{\ell}x}  \sum_{0 \leq j < B} C(r, i+\ell j) x^j +  \frac{C(r, i + \ell B)}{1-q^{\ell}x} x^{B}.
\end{eqnarray}
(See equation (27) in \cite{DFL}.\footnote{Note that the condition is wrongly stated as $B \geq [(1+\deg(r)-i)/\ell]$ in \cite{DFL}.})

We will now prove the following convexity bound on the size of $\psi^{i)}(r,u)$, for $u=q^{-s}$.

\begin{theorem}
\label{residue}
Let $u=q^{-s}$ and $\sigma=\re(s)$. For $i \in \{0,1,\ldots,\ell-1\}$, $q^{-3/2} \leq |u| \leq q^{-1/2}$ and $|u^\ell-q^{-\ell-1}|>\delta$, we have
$$\psi^{(i)}(r,s) \ll_{\delta,\varepsilon} |r|^{\frac12 \left( \frac{3}{2} - \sigma+ \varepsilon \right)}.$$
Moreover, for the residue at the poles  $u = q^{-1-\frac{1}{\ell}} \xi_{\ell}$ (where $\xi_\ell$ is any $\ell$th root of unity), we have
$$
\mathrm{Res}_{u=q^{-1-\frac{1}{\ell}} \xi_{\ell}
}\psi^{(i)}(r,s) \ll \begin{cases}|r|^{\frac{\ell-2}{4\ell} + \varepsilon} & \ell>3,\\
  |r_1|^{-\frac{1}{6}} & \ell=3,
                                       \end{cases}
$$
where $r_1$ is such that $r=r_1r_2^2r_3^3$ with $r_1,r_2$ square-free and $(r_1,r_2)=1$.
\end{theorem}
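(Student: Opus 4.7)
My plan is to prove both assertions by applying the Phragm\'en--Lindel\"of principle to the entire function
\begin{equation*}
Q(r,i,s) := (1-q^{\ell+1-\ell s})\,\psi^{(i)}(r,s) = u^i\,P(r,i,u^\ell), \qquad u = q^{-s},
\end{equation*}
which by \eqref{psi-formula} is a polynomial in $u$ of degree at most $\deg(r)+1$, and hence bounded on any bounded vertical strip in $s$.

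On the right boundary $\sigma = 3/2+\varepsilon'$ I would establish absolute convergence of the Dirichlet series defining $\psi^{(i)}(r,s)$: factoring each $F$ as $F_0 F_1$ with $F_0$ square-free coprime to $r$ and $F_1 \mid r\,\mathrm{rad}(r)$, the multiplicativity relation of Lemma \ref{gauss}(i) and the prime-power estimates of Lemma \ref{gauss}(ii) yield
$$\sum_{\deg F = k} |G_\ell(r,F)| \ll |r|^{\varepsilon}\,q^{3k/2},$$
so $|\psi^{(i)}(r,s)| \ll |r|^{\varepsilon}$, and hence $|Q(r,i,s)| \ll |r|^{\varepsilon}$, on this line. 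On the left boundary $\sigma = 1/2-\varepsilon'$ I would apply the functional equation \eqref{funct_eq}. The key observation --- which is what separates the correct convexity exponent $(3/2-\sigma)/2$ from the naive $(3/2-\sigma)$ --- is that $\tau$ in \eqref{epsilonfactor} is a \emph{constant-field} Gauss sum summed over $a \in \F_q^*$, so $W_{r,i} = \tau(\chi_\ell^{2i-1}\overline{\chi}_r)$ satisfies $|W_{r,i}| \le q^{1/2}$, which is $O_q(1)$ independently of $|r|$. Combined with $|(qu)^{\deg(r)}| = |r|^{1/2}$, $|a_1(u)|,|a_2(u)| = O_q(1)$ at $\sigma=1/2$, and the already-established $|\psi^{(\cdot)}(r,1/(q^2u))| \ll |r|^{\varepsilon}$ at $\sigma'=3/2$, this yields $|Q(r,i,s)| \ll |r|^{1/2+\varepsilon}$ on the left boundary. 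Phragm\'en--Lindel\"of across the strip of width $1$ then produces $|Q(r,i,s)| \ll |r|^{(3/2-\sigma)/2+\varepsilon}$, and since $|1-q^{\ell+1-\ell s}| \gg_\delta 1$ away from the zero set $\{u^\ell = q^{-\ell-1}\}$, division yields the first assertion.

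For the residue at $u_0 = q^{-1-1/\ell}\xi_\ell$ when $\ell > 3$, I would fix a small $\rho > 0$ and apply
$$\bigl|\mathrm{Res}_{u=u_0}\psi^{(i)}(r,s)\bigr| \le \rho\max_{|u-u_0|=\rho}|\psi^{(i)}(r,s)|.$$
On the circle $|u-u_0| = \rho$, $\sigma$ stays within $O(\rho)$ of $1+1/\ell$ and $|u^\ell - q^{-\ell-1}| \gg \rho$, so the convexity bound with $\delta = \rho$ gives $|\psi^{(i)}(r,s)| \ll_\rho |r|^{(\ell-2)/(4\ell)+\varepsilon}$, using $(3/2-(1+1/\ell))/2 = (\ell-2)/(4\ell)$. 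For $\ell = 3$, convexity only yields $|r|^{1/12+\varepsilon}$, which is much weaker than the claimed decaying bound $|r_1|^{-1/6}$; here I would instead evaluate $P(r,i,q^{-\ell-1})$ explicitly via \eqref{formula-for-P}. Since $1-q^{\ell+1}x$ vanishes at $x=q^{-\ell-1}$, the first sum there disappears and only the single term $C(r,i+\ell B)\,q^{-(\ell+1)B}/(1-q^{-1})$ survives. I would then identify this cubic Gauss sum coefficient with a value of the cubic theta function at $r_1$, via the decomposition $r = r_1 r_2^2 r_3^3$, invoking the Hoffstein--Patterson theory of theta series on the $3$-fold metaplectic cover of $\mathrm{SL}(2)$ \cite{hoffstein, patterson} as carried out in \cite{DFL} over $\F_q[t]$; the bound $|r_1|^{-1/6}$ is then the standard Patterson-type estimate for cubic theta coefficients.

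The main obstacle will be the $\ell = 3$ residue bound: it cannot be reached by pure complex analysis and must be extracted from the metaplectic theta-function machinery. By contrast, the convexity bound itself is a routine Phragm\'en--Lindel\"of computation, and its only subtle point is recognizing that $W_{r,i}$ is a constant-field Gauss sum, which keeps the transfer factor in the functional equation of size $O(|r|^{1/2})$ rather than $O(|r|)$.
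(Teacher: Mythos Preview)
Your proof is correct, and in fact your route to the convexity bound is cleaner than the paper's. You apply Phragm\'en--Lindel\"of (equivalently, Hadamard three circles in the $u$-variable) directly to the polynomial $Q(r,i,s)=(1-q^{\ell+1-\ell s})\psi^{(i)}(r,s)$, which is entire and periodic in $\im(s)$; the boundary bounds at $\sigma=3/2+\varepsilon'$ and $\sigma=1/2-\varepsilon'$ are exactly as you state, and your observation that $W_{r,i}=\tau(\chi_\ell^{2i-1}\overline{\chi}_r)$ is a sum over $\F_q^*$ and hence $O_q(1)$ is the crucial point. The paper instead interpolates the symmetric product $\Phi(r,s)=(1-q^{\ell+1-\ell s})(1-q^{\ell s-\ell+1})\psi^{(i)}(r,s)\psi^{(i)}(r,2-s)$ across the strip, obtaining $\Phi\ll|r|^{1/2+\varepsilon}$, and then unwinds this into a bound on $\psi^{(i)}+\psi^{(j)}$ via a case analysis on whether $(a_1,a_2)$ and $(b_2,b_1)$ are linearly independent, finally recovering the individual $\psi^{(i)}$ by averaging over roots of unity. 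This is the argument imported from \cite{DFL}, and it works, but your direct approach avoids the detour entirely. For the residues, both you and the paper proceed identically: the $\ell>3$ case follows from convexity at $\sigma=1+1/\ell$, and the $\ell=3$ case is the Patterson--Hoffstein cubic theta bound, which the paper cites as \cite[Lemma~3.9]{DFL} and you correctly identify as lying outside pure complex analysis.
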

\begin{remark}
 The improvement on the bound for the residue in the case $\ell=3$ follows from \cite[Lemma 3.9]{DFL}.
\end{remark}
\begin{proof}
The bound for the residue when $\ell>3$ follows directly from the convexity bound that we will now prove, following a very similar argument to the proof of Lemma $3.10$ in \cite{DFL}. We will only briefly sketch it for the sake of completeness.

 Exactly as in the argument leading to equation (38) in \cite{DFL}, we have that for $\sigma \leq 3/2$, 
$$|\psi^{(i)} (r,s)| \ll |r|^{3/2-\sigma +\varepsilon}.$$ Therefore,
$$ |\psi^{(i)} (r,s)| \ll |r|^{\varepsilon}$$ when $\sigma = 3/2$.
Now using the functional equation \eqref{funct_eq}, we have that when $\sigma=1/2$ and $|u^\ell-q^{-\ell-1}|>\delta$,
$$|\psi^{(i)} (r,s)| \ll |r|^{1/2+\varepsilon}.$$
We consider the function $\Phi(r, s) = (1-q^{\ell+1-\ell s}) (1-q^{\ell s-\ell+1}) \psi^{(i)}(r,s) \psi^{(i)}(r, 2-s).$  Then $\Phi(r,s)$ is holomorphic for $1/2 \leq \sigma \leq 3/2$ and $\Phi(r,s) \ll |r|^{1/2+\varepsilon}$ for $\sigma=1/2$ and $\sigma = 3/2$. Using the Phragm\'en--Lindel\"of principle, it follows that
$$\Phi(r,s) \ll |r|^{1/2+\varepsilon},$$ for $1/2 \leq \sigma \leq 3/2$.
For ease of notation, let $j = [\deg(f)+1-i]_{\ell}$. Similarly as in equation (40) in \cite{DFL},  we have that for $1/2\leq \sigma \leq 3/2$ and $|u^{\ell}-q^{-\ell-1}| > \delta$,
\begin{align*}
(1-q^{\ell+1-\ell s}) ( 1-q^{\ell s-\ell+1}) \Big(a_1(s) \psi^{(i)}(r,2-s)^2  +a_2(s) \psi^{(i)}(r,2-s) \psi^{(j)}(r,2-s) \Big) \ll |r|^{\sigma-1/2+\varepsilon}.
\end{align*} 
If $i \equiv j \pmod{\ell}$ (or, equivalently, $\deg(f) +1 \equiv 2i \pmod{\ell}$), then from the above we get that
$$ \psi^{(i)}(r,2-s) + \psi^{(j)}(r,2-s) \ll |r|^{\frac{1}{2}(\sigma-1/2+\varepsilon)}.$$
If $i \not \equiv j \pmod{\ell}$, then we proceed as in equation (43) of \cite{DFL}, and we get that
\begin{align}
(1-q^{\ell+1-\ell s})& ( 1-q^{\ell s-\ell+1}) \Big(  \psi^{(i)}(r,2-s)+ \psi^{(j)}(r,2-s)\Big) \Big( a_1(s) \psi^{(i)}(r,2-s) + a_2(s) \psi^{(j)}(r,2-s)\Big) \nonumber \\
& \ll |r|^{\sigma-1/2+\varepsilon}.\label{bd11}
\end{align}
Switching between $i$ and $j$ (since $i \not \equiv j \pmod{\ell}$), we get, from  the functional equation \eqref{funct_eq}, that there exist  absolutely bounded constants $b_1(s)$ and $b_2(s)$ such that
$$\psi^{(j)}(r,s) = b_1(s) |r|^{1-s} \psi^{(j)}(r,2-s) + b_2(s) |r|^{1-s} \psi^{(j)}(r,2-s).$$
If $(a_1,a_2)$ and $(b_2,b_1)$ are not linearly independent, then from the above and the functional equation \eqref{funct_eq}, we get that there exists some absolutely bounded $\lambda(s)$ such that $\psi^{(j)}(r,s) = \lambda(s) \psi^{(i)}(r,s)$ and hence using equation \eqref{bd11}, we have that
$$\psi^{(i)}(r,2-s) \ll |r|^{1/2(\sigma-1/2+\varepsilon)}.$$
If $(a_1,a_2)$ and $(b_2,b_1)$ are linearly independent, then we proceed similarly as in equation \eqref{bd11}, and we get that
\begin{align*}
(1-q^{\ell+1-\ell s})& ( 1-q^{\ell s-\ell+1}) \Big(  \psi^{(j)}(r,2-s)+ \psi^{(i)}(r,2-s)\Big) \Big( b_1(s) \psi^{(j)}(r,2-s) + b_2(s) \psi^{(i)}(r,2-s)\Big) \\
& \ll |r|^{\sigma-1/2+\varepsilon}.
\end{align*}
Combining this and equation \eqref{bd11}, we get that
$$(\psi^{(i)}(r,2-s)+\psi^{(j)}(r,2-s)) \psi^{(i)}(r,s) \ll |r|^{\sigma-1/2+\varepsilon},$$ and
$$(\psi^{(i)}(r,2-s)+\psi^{(j)}(r,2-s)) \psi^{(j)}(r,s) \ll |r|^{\sigma-1/2+\varepsilon}.$$
From the two equations above we get that 
$$\psi^{(i)}(r,2-s) + \psi^{(j)}(r,2-s) \ll |r|^{1/2(\sigma-1/2+\varepsilon)}.$$
Summing the equation above with $i =0, 1, \ldots, \ell-1$ and replacing $2-s$ by $s$ gives the bound for $\Psi(r,u):=\sum_{i=0}^{\ell-1}\psi^{(i)}(r,u)$. Since
\[\ell \psi^{(i)}(r,u)=\sum_{j=0}^{\ell-1}\xi_\ell^{-ij} \Psi(r,\xi_\ell^ju),\]
where $\xi_\ell$ is any primitive $\ell$th root of unity,
we deduce the bound for each individual $|\psi^{(i)}(r,s)|$, which finishes the proof.
 \end{proof}

We can also prove the following upper bound for $\psi^{(i)}(r,u)$.
\begin{lemma} Let $\ell=3$, and 
 $u=q^{-s}$ with $q^{-3/2} \leq |u| \leq q^{-1/2}$, and $|u^{3}-q^{-4}|>\delta$. Let $\sigma=\re(s)$. Then,
 $$\psi^{(i)}(r,u)\ll |r|^{\varepsilon} + |r|^{4/3-\sigma}|r_1|^{-1/6},$$
where $r_1$ is as in Theorem \ref{residue}.
\label{new_lemma}
\end{lemma}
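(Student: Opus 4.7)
The plan is to adapt the proof of Theorem \ref{residue} using the refined residue bound for $\ell=3$ from \cite[Lemma 3.9]{DFL}. The central idea is that $\psi^{(i)}(r,u) = N(u)/(1-q^4u^3)$ is a rational function with numerator $N(u)=u^iP(r,i,u^3)$ of degree at most $\deg(r)+1$ (by \eqref{psi-formula}), whose three poles on $|u|=q^{-4/3}$ carry small residues; we need to propagate this into a pointwise estimate across the whole strip.

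I would begin by writing a Lagrange-type decomposition
\[
\psi^{(i)}(r,u)=\frac{L(u)}{1-q^4u^3}-\frac{M(u)}{q^4},
\]
where $L$ is the unique polynomial of degree $\le 2$ with $L(u_j)=N(u_j)$ at the three roots $u_j$ of $u^3=q^{-4}$, and $M$ is a polynomial of degree $\le \deg(r)-2$ obtained from $N(u)-L(u)=(u^3-q^{-4})M(u)$. A direct L'H\^opital computation shows that $N(u_j)$ is essentially $3\log q\cdot \mathrm{Res}_{s_j}\psi^{(i)}(r,s)$, so by the improved residue bound for $\ell=3$ we obtain $|N(u_j)|\ll |r_1|^{-1/6}$. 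Lagrange interpolation then gives $|L(u)|\ll |r_1|^{-1/6}$ in the relevant annulus, and a short case analysis of the ratio $|L(u)/(1-q^4u^3)|$ for $|u^3-q^{-4}|>\delta$ (splitting into $\sigma\ge 4/3$ and $\sigma<4/3$) yields a uniform bound of $|r_1|^{-1/6}$ for this polar part.

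The core step is to bound $M(u)$. On the circle $|u|=q^{-3/2}$, the estimate $|\psi^{(i)}(r,u)|\ll |r|^{\varepsilon}$ established in the argument leading to equation~(38) of \cite{DFL} (reproduced in the proof of Theorem \ref{residue}) combined with the polar bound gives $|M(u)|\ll |r|^{\varepsilon}$ there. Since $M$ is a polynomial of degree $\le \deg(r)-2$, Cauchy's integral formula yields coefficient bounds $|M_k|\ll |r|^{\varepsilon} q^{3k/2}$, and for $|u|=q^{-\sigma}$ with $\sigma<3/2$ one recovers $|M(u)|\ll |r|^{\varepsilon}|r|^{3/2-\sigma}$. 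The key improvement comes from noting that, after the subtraction of $L$, the polynomial $M$ inherits the small residue data: bounding $M$ using the Cauchy formula on a smaller circle (of radius $q^{-4/3}$ rather than $q^{-3/2}$) where $M$ is controlled by $|r_1|^{-1/6}$ via the polar bound and boundary values replaces the trivial extension factor $|r|^{3/2-\sigma}$ with the sharper $|r|^{4/3-\sigma}$. Assembling, for $\sigma\ge 4/3$ the term $|r|^{\varepsilon}$ dominates and matches the first summand of the claim, while for $1/2\le \sigma<4/3$ the polynomial-extension argument produces $|r|^{4/3-\sigma}|r_1|^{-1/6}$, the second summand.

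The main obstacle is precisely Step 3: establishing a uniform bound of order $|r_1|^{-1/6}$ on $M$ (or equivalently on $N$) along the full circle $|u|=q^{-4/3}$ from the three pointwise residue bounds, as opposed to only at the poles $u_j$. I would handle this by writing $N(u)-L(u)=(u^3-q^{-4})M(u)$ and estimating $M$ via a Phragm\'en--Lindel\"of argument between $|u|=q^{-3/2}$ (where the $|r|^{\varepsilon}$ bound is available) and $|u|=q^{-1/2}$ (where the functional equation from the proof of Theorem \ref{residue} provides $|r|^{1/2+\varepsilon}$), together with the vanishing of $N-L$ at the $u_j$ to absorb one factor of $(u^3-q^{-4})$ and reduce the effective degree. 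The radius-scaling factor $|r|^{4/3-\sigma}$ then arises naturally as $(|u|/q^{-4/3})^{\deg(r)}$, the outward growth of a degree-$\deg(r)$ polynomial bounded by $|r_1|^{-1/6}$ on the circle of radius $q^{-4/3}$.
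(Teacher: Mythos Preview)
Your Step~3 has a genuine gap that your own diagnosis does not resolve. To get $|r|^{4/3-\sigma}|r_1|^{-1/6}$ out of the Cauchy or maximum-modulus argument, you need $|M(u)|\ll |r_1|^{-1/6}$ on the circle $|u|=q^{-4/3}$. But on (or near) that circle, $M(u)=(N(u)-L(u))/(u^3-q^{-4})$ is only controlled by what you know about $N$ there, and the best available bound on $N$ near $\sigma=4/3$ is the convexity bound $|\psi^{(i)}|\ll |r|^{1/12+\varepsilon}$, giving $|M|\ll |r|^{1/12+\varepsilon}$, not $|r_1|^{-1/6}$. The Phragm\'en--Lindel\"of between $|u|=q^{-3/2}$ and $|u|=q^{-1/2}$ that you propose simply reproduces convexity; subtracting the degree-2 interpolant $L$ removes three zeros but does not change the interpolated exponent. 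In short, your polar subtraction controls the residues, not the full numerator on the critical circle, and that is exactly what you need.

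The paper's argument avoids this entirely by using the explicit formula \eqref{formula-for-P}. That formula writes $(1-q^4u^3)\psi^{(i)}(r,u)$ as a finite sum $\sum_{0\le j<B}C(r,i+3j)u^{i+3j}$ (up to harmless factors) plus a single tail term $C(r,i+3B)$ with $i+3B>\deg(r)$. The crucial point is that this tail coefficient \emph{is} the residue up to normalization (by the equation after (28) in \cite{DFL}), so the $|r_1|^{-1/6}$ bound applies directly to it, and scaling by $|u|^{i+3B}=q^{-\sigma i'}$ produces the $|r|^{4/3-\sigma}|r_1|^{-1/6}$ term. The initial coefficients are handled by the trivial estimate $C(r,k)\ll q^{3k/2}|r|^\varepsilon$ from the proof of convexity, and the resulting sum $\sum_j q^{3j(1/2-\sigma)}|r|^\varepsilon$ is $O(|r|^\varepsilon)$. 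No interpolation, no Phragm\'en--Lindel\"of, no circle of radius $q^{-4/3}$: the decomposition into ``small coefficients + residue'' is already built into \eqref{formula-for-P}, and you should use it directly.
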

\begin{proof}
Using the equation right after equation $(28)$ in \cite{DFL}, we have that 
$$\text{Res}_{u^3=q^{-4}} \psi^{(i)}(r,u) = \frac{C(r,i')}{(1-q^{-1})q^{4/3(i'-i)}},$$ where $i' \equiv i \pmod 3$ and $i'>\deg(r)$, and where recall that $C(r,i')$ is given in \eqref{ccoeff}. Since $$\text{Res}_{u^3=q^{-4}} \psi^{(i)}(r,u) \ll |r_1|^{-1/6},$$ where $r_1$ is chosen as in Theorem \ref{residue}, 
 we have
$$C(r,i') \ll q^{4i'/3}|r_1|^{-1/6},$$ for $i'> \deg(r)$. On the other hand, we also showed in the proof of the convexity bound (\cite[Theorem 3.10]{DFL}) that for any $k$,
$$C(r,k) \ll q^{3k/2}|r|^{\varepsilon}.$$
If $|u|=q^{-\sigma}$ and $1/2\leq \sigma \leq 4/3$, then from equations \eqref{psi-formula}, \eqref{formula-for-P} and the two equations above, we get that
\begin{equation*}
|\psi^{(i)}(r,u)| \ll \sum_{0 \leq j <B} q^{3j/2-3j \sigma} |r|^{\varepsilon} + |r|^{4/3-\sigma}|r_1|^{-1/6} \ll |r|^{\varepsilon}+ |r|^{4/3-\sigma}|r_1|^{-1/6}.
\end{equation*}
\end{proof}

We also have the following approximate functional equation. 
\begin{theorem}[The approximate functional equation]
\label{afe}
For any $n \leq \deg(r)$, we have that
\begin{align*}
\psi^{(i)}(r,u) &= \frac{1}{1-q^{\ell+1} u^{\ell}} \Big[ \sum_{\substack{\deg(F) \leq n \\ \deg(F) \equiv i \pmod{\ell}}} G_\ell(r,F) u^{\deg(F)} b_1(\deg(F),u) \\
&+(qu)^{\deg(r)}  \sum_{\substack{\deg(F) \leq \deg(r)-n-\rho \\ \deg(F) \equiv i \pmod{\ell}}} \frac{G_\ell(r,F)}{(q^2u)^{\deg(F)}} b_2(\deg(F),u) \nonumber  \\
&+ (qu)^{\deg(r)}\sum_{\substack{\deg(F) \leq \deg(r)-n \\ \deg(F) \equiv j \pmod{\ell}}} \frac{G_\ell(r,F)}{(q^2u)^{\deg(F)}} b_3(\deg(F),u) \Big], \nonumber
\end{align*}
where $\rho = [\deg(r)+1-2i]_{\ell}$, $j=[\deg(r)+1-i]_{\ell}$, and
where $b_1(\deg(F),u)$, $b_2(\deg(F),u)$, $b_3(\deg(F),u)$ are such that when $|u|=q^{-\sigma}$ and $\sigma>1$, $b_1(\deg(F),u)= O(1)$, $b_3(\deg(F),u) =O(1)$ and $b_2(\deg(F),u) = O \Big( q^{(\sigma-1)( \deg(r) - n - \deg(F))}\Big)$.
\end{theorem}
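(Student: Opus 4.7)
The plan is to derive the approximate functional equation by splitting the defining series $(1-q^\ell u^\ell)\psi^{(i)}(r,u) = \sum_{F\equiv i}G_\ell(r,F)u^{\deg F}$ at $\deg F = n$ and then using the exact functional equation \eqref{funct_eq} to rewrite the tail in terms of the dual series $\psi^{(i)}(r,1/(q^2u))$ and $\psi^{(j)}(r,1/(q^2u))$ at the mirror point, which are themselves truncated. The first sum in the AFE corresponds to the ``head'' $\deg F \leq n$ of the direct series; the second and third correspond to the ``heads'' of the dual expansions, truncated at $\deg F \leq \deg(r)-n-\rho$ and $\deg F \leq \deg(r)-n$ respectively.

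Concretely, I would introduce a smooth cutoff weight $w(k,u)$ on the degree variable, constructed via a contour representation $w(k,u) = \frac{1}{2\pi i}\oint_{|v|=\delta} H(v) v^{k-n-1}\,dv$ for a suitable holomorphic $H$ with $H(1)=1$. Inserting $1 = w(\deg F,u) + (1-w(\deg F,u))$ into the defining series and multiplying through by $(1-q^\ell u^\ell)/(1-q^{\ell+1}u^\ell)$, the contribution of $w$ produces the first sum of the AFE, with $b_1$ absorbing $w$ together with the polar ratio. For the complementary tail (weight $1-w$), I would apply the functional equation \eqref{funct_eq} to substitute $(qu)^{\deg r}[a_1(u)\psi^{(i)}(r,1/(q^2u)) + a_2(u)\psi^{(j)}(r,1/(q^2u))]$ in place of $(1-q^{\ell+1}u^\ell)\psi^{(i)}(r,u)$, and expand each $\psi^{(k)}(r,1/(q^2u)) = (1-q^{-\ell}u^{-\ell})^{-1}\sum_{F\equiv k}G_\ell(r,F)(q^2u)^{-\deg F}$ with an appropriate dual cutoff $1-\tilde w$. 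Using the algebraic simplification $\tfrac{(qu)^{1-\ell}(1-q^\ell u^\ell)}{1-q^{-\ell}u^{-\ell}} = -qu$, the $a_2$ contribution collapses to a clean expression $W_{r,i}(qu)^{\deg r+1}\sum_{F\equiv j}G_\ell(r,F)(q^2u)^{-\deg F}$ (restricted by $\tilde w$), yielding the third sum with $b_3$. The $a_1$ contribution, which carries the additional factor $(qu)^{-\rho}$ and does not benefit from an analogous cancellation, yields the second sum with $b_2$, and the presence of $(qu)^{-\rho}$ forces the upper limit of the dual cutoff to shift by $\rho=[\deg(r)+1-2i]_\ell$.

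The main technical obstacle is establishing the uniform bound $b_2 = O(q^{(\sigma-1)(\deg(r)-n-\deg F)})$ for $\sigma>1$, which is less clean than the bounds $b_1, b_3 = O(1)$. Since the $a_1$ piece does not enjoy the cancellation with $1-q^{-\ell}u^{-\ell}$, one must expand this denominator as the geometric series $-\sum_{k\geq 1}(q^\ell u^\ell)^k$ (valid precisely for $|u|<q^{-1}$, i.e.\ $\sigma>1$); combining the factor $(qu)^{-\rho}$ with these geometric terms and reindexing by the effective dual degree $\deg(r)-n-\deg F$ yields the claimed growth. The bounds $b_1, b_3 = O(1)$ then follow from the uniform boundedness of the cutoffs $w,\tilde w$ and the $a_2$-simplification, while the truncation ranges $\deg F \leq \deg(r)-n-\rho$ for $b_2$ and $\deg F \leq \deg(r)-n$ for $b_3$ are exactly those forced by the dual cutoff after the shifts induced by $a_1$ and $a_2$.
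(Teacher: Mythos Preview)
Your proposal is essentially the paper's own argument: the paper takes the sharp cutoff $H(v)=1/(1-v)$, i.e.\ works with the integral
\[
I_{-3}(n,u)=\frac{1}{2\pi i}\oint_{|v|=q^{-3}}\frac{(1-q^{\ell+1}(uv)^\ell)\psi^{(i)}(r,uv)}{v^{n+1}(1-v)}\,dv,
\]
integrates term by term to get the head, shifts to $|v|=q^3$ picking up the residue $(1-q^{\ell+1}u^\ell)\psi^{(i)}(r,u)$ at $v=1$, and on the large circle replaces $\psi^{(i)}(r,uv)$ via the functional equation \eqref{funct_eq}; your identification of the $a_1$/$a_2$ roles, the cancellation $\tfrac{(qu)^{1-\ell}(1-q^\ell u^\ell)}{1-q^{-\ell}u^{-\ell}}=-qu$ yielding $b_3=O(1)$, and the residual geometric sum $\sum_{k\le[(\deg(r)-n-\rho-\deg F)/\ell]}(qu)^{-\ell k}$ producing the $b_2$ growth, are exactly right.
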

\begin{proof}
We start by considering the integral
$$I_{-3}(n,u) = \frac{1}{2 \pi i} \oint_{|v|=q^{-3}} \frac{ (1- q^{\ell+1} (uv)^{\ell}) \psi^{(i)}(r, uv)}{v^{n+1}(1-v)} \, dv.$$
Recall that
$$\psi^{(i)}(r,uv) = \frac{1}{1-(quv)^{\ell}} \sum_{\deg(F) \equiv i \pmod{\ell}} G_\ell(r,F) (uv)^{\deg(F)}.$$
Integrating in the expression for $I_{-3}(n,u)$ term by term, we get that
\begin{align*}
I_{-3}(n,u) =& \sum_{\substack{\deg(F) \leq n \\ \deg(F) \equiv i \pmod{\ell}}} G_\ell(r,F) u^{\deg(F)} \sum_{k \leq [\frac{n-\deg(F)}{\ell} ]} (qu)^{\ell k}\\
& - q^{\ell+1} u^{\ell} \sum_{\substack{\deg(F) \leq n-\ell \\ \deg(F) \equiv i \pmod{\ell}}} G_\ell(r,F) u^{\deg(F)} \sum_{k \leq [\frac{n-\deg(F)-\ell}{\ell} ]} (qu)^{\ell k}.\nonumber
\end{align*}
In the expression for $I_{-3}(n,u)$, we shift the contour to $|v|=q^3$, and encounter the pole at $v=1$. It follows that 
$$I_{-3}(n,u) = (1-q^{\ell+1}u^{\ell}) \psi^{(i)}(r,u) + I_3(n,u).$$
In the expression for $I_3(n,u)$, we use the functional equation \eqref{funct_eq} for $\psi$, and we have that
\begin{align*}
I_3(n,u) &= \frac{1}{2 \pi i} \oint_{|v|=q^3} \frac{(quv)^{\deg(r)}}{v^{n+1}(1-v)} \Big[ a_1(uv) \psi^{(i)} \Big(r,\frac{1}{q^2uv} \Big)+ a_2(uv) \psi^{(j)} \Big( r, \frac{1}{q^2uv}\Big)\Big] \, dv,
\end{align*}
where  we have written $j=[\deg(r)+1-i]_{\ell}$. Using the fact that
$$\frac{1}{2 \pi i} \oint_{|v|=q^3} \frac{v^a}{v^n(1-v)} \, dv = 
\begin{cases}
0 & \mbox{ if } n>a,\\
-1 &\mbox{ if } n \leq a,
\end{cases}
$$ 
we get that
\begin{align*}
I_3(n,u) &= (qu)^{\deg(r)} q^{2-\rho} u^{1-\rho} (1-q^{-1}) \sum_{\substack{\deg(F) \leq \deg(r)-n-\rho \\ \deg(F) \equiv i \pmod{\ell}}} \frac{G_\ell(r,F)}{(q^2u)^{\deg(F)}} \sum_{k \leq \Big[ \frac{\deg(r)-n-\rho-\deg(F)}{\ell} \Big]} \frac{1}{(qu)^{\ell k}} \\
&+(qu)^{\deg(r)+1-\ell}  W_{r,i} \sum_{\substack{\deg(F) \leq \deg(r)-n-\ell \\ \deg(F) \equiv j \pmod{\ell}}} \frac{G_\ell(r,F)}{(q^2u)^{\deg(F)}} - (qu)^{\deg(r)+1} W_{r,i} \sum_{\substack{\deg(F) \leq \deg(r)-n \\ \deg(F) \equiv j \pmod{\ell}}} \frac{G_\ell(r,F)}{(q^2u)^{\deg(F)}},
\end{align*}
where we have written $\rho = [\deg(r)+1-2i]_{\ell}$.

 Putting things together, we get that
\begin{align*}
(1-q^{\ell+1} & u ^{\ell}) \psi^{(i)}(r,u) = \sum_{\substack{\deg(F) \leq n \\ \deg(F) \equiv i \pmod{\ell}}} G_\ell(r,F) u^{\deg(F)} \sum_{k \leq [\frac{n-\deg(F)}{\ell} ]} (qu)^{\ell k}\\
& - q^{\ell+1} u^{\ell} \sum_{\substack{\deg(F) \leq n-\ell \\ \deg(F) \equiv i \pmod{\ell}}} G(r,F) u^{\deg(F)} \sum_{k \leq [\frac{n-\deg(F)-\ell}{\ell} ]} (qu)^{\ell k} \\
&-(qu)^{\deg(r)} q^{2-\rho} u^{1-\rho} (1-q^{-1}) \sum_{\substack{\deg(F) \leq \deg(r)-n-\rho \\ \deg(F) \equiv i \pmod{\ell}}} \frac{G(r,F)}{(q^2u)^{\deg(F)}} \sum_{k \leq \Big[ \frac{\deg(r)-n-\rho-\deg(F)}{\ell} \Big]} \frac{1}{(qu)^{\ell k}} \\
&-(qu)^{\deg(r)+1-\ell}  W_{r,i} \sum_{\substack{\deg(F) \leq \deg(r)-n-\ell \\ \deg(F) \equiv j \pmod{\ell}}} \frac{G(r,F)}{(q^2u)^{\deg(F)}} + (qu)^{\deg(r)+1} W_{r,i} \sum_{\substack{\deg(F) \leq \deg(r)-n \\ \deg(F) \equiv j \pmod{\ell}}} \frac{G(r,F)}{(q^2u)^{\deg(F)}}.
\end{align*}
Rearranging the above, the conclusion follows. 
\end{proof}

\section{The generating series for order $\ell$ Gauss sums with coprimality and divisibility conditions}
\label{sec:remove}
Let $v, r, a \in \F_q[t]$, and $i \in \Z$.
We show in this section how to express the more general generating series
\begin{align*}
 \psi_v^{(i)}(r, a, u)  := (1-q^{\ell} u^{\ell})^{-1}\sum_{\substack{F \in \cM\\ \deg(F) \equiv i \pmod{\ell} \\(F, v)=1\\a \mid F}}  G_\ell(r, F) u^{\deg(F)}
 \end{align*}
 in terms of the generating series $\psi^{(i)}(r, u)$ of the previous section. Notice that $\psi^{(i)}(r, u)  =  \psi_1^{(i)}(r, 1, u)$. In addition, we will employ the following notation
 \begin{align*}
\psi_v^{(i)}(r, u) &=  \psi_v^{(i)}(r, 1, u).
\end{align*}
The following generalizes Lemma $3.5$ in \cite{DFL} to general $\ell$. 
\begin{lemma} \label{relations}
Let $\pi$ be a prime such that $\pi \nmid r$. Let $r_0 \mid r$. Then we have
\begin{align}\label{chantal1}
\psi_{r_0\pi}^{(i)}(r ,u) &=  \psi_{r_0}^{(i)}(r, u) -   G_\ell(r, \pi) u^{\deg(\pi)} \psi_{r_0\pi}^{(i-\deg(\pi))}(r \pi^{\ell-2},u), \\
\psi_{r_0 \pi}^{(i)}(r \pi^{\ell-1},u) &=  \left( 1 - q^{(\ell-1) \deg(\pi)} u^{\ell \deg(\pi)} \right)^{-1} \psi_{r_0}^{(i)}(r \pi^{\ell-1}, u),\label{chantal2}
\end{align}
and for $1 \leq j \leq \ell-2$,
\begin{align}
\psi_{r_0 \pi}^{(i)}(r \pi^j ,u) &= \psi_{r_0}^{(i)}(r \pi^j,u)
 - q^{j \deg(\pi)} G_{\ell, j+1}(r, \pi) u^{(j+1) \deg(\pi)} \psi_{r_0 \pi}^{(i-(j+1)\deg(\pi))}(r \pi^{\ell-j-2},u).\label{chantal3}
 \end{align}
 \end{lemma}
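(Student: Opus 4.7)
The plan is to prove each of the three identities by decomposing the defining sum of $\psi_{r_0}^{(i)}$ (at the pertinent argument) according to the $\pi$-adic valuation of $F$. Writing $F = \pi^\alpha \tilde F$ with $\pi \nmid \tilde F$ and $\alpha \geq 0$, the contribution of $\alpha = 0$ is precisely $\psi_{r_0\pi}^{(i)}$ at the same argument, so the analysis reduces to the tail $\alpha \geq 1$. Using the version
\[
G_\ell(s, \pi^\alpha \tilde F) = G_\ell\bigl(s\pi^{\alpha(\ell-2)}, \tilde F\bigr) \, G_\ell(s, \pi^\alpha)
\]
of Lemma \ref{gauss}(i) and the prime-power evaluation in Lemma \ref{gauss}(ii), we can identify the finitely many $\alpha \geq 1$ for which $G_\ell(s, \pi^\alpha)$ is nonzero and evaluate it explicitly, then collect the residual sum over $\tilde F$ into a shifted copy of $\psi_{r_0\pi}^{(\cdot)}$.

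For \eqref{chantal1} the argument is $s = r$ with $\pi \nmid r$, so in the notation of Lemma \ref{gauss}(ii) the $\pi$-exponent of the shift is $0$: hence $G_\ell(r, \pi^\alpha) = 0$ for $\alpha \geq 2$, and the only surviving contribution is $\alpha = 1$, which yields $G_\ell(r, \pi)$. Accounting for the degree shift $\deg(\tilde F) \equiv i - \deg(\pi) \pmod \ell$ and repackaging the tail as $G_\ell(r, \pi)\, u^{\deg\pi} \, \psi_{r_0\pi}^{(i-\deg\pi)}(r\pi^{\ell-2}, u)$ yields \eqref{chantal1}.

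For \eqref{chantal2} the argument is $s = r\pi^{\ell-1}$, so the $\pi$-exponent is $\ell-1$: Lemma \ref{gauss}(ii) then forces $G_\ell(r\pi^{\ell-1}, \pi^\alpha) = 0$ for $1 \leq \alpha \leq \ell-1$ (since $\alpha \not\equiv 0 \pmod \ell$) and for $\alpha \geq \ell+1$, while $\alpha = \ell$ produces the explicit value $-q^{(\ell-1)\deg\pi}$. The key observation is that the shift $r\pi^{\ell-1+\ell(\ell-2)}$ on the residual $\tilde F$-factor collapses back to $r\pi^{\ell-1}$ because $\chi_{\tilde F}(\pi)^{\ell(\ell-2)} = 1$, so the tail becomes $-q^{(\ell-1)\deg\pi}\, u^{\ell\deg\pi} \, \psi_{r_0\pi}^{(i)}(r\pi^{\ell-1}, u)$, and solving the resulting linear equation in $\psi_{r_0\pi}^{(i)}(r\pi^{\ell-1}, u)$ gives \eqref{chantal2}.

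For \eqref{chantal3}, with $s = r\pi^j$ and $1 \leq j \leq \ell-2$, Lemma \ref{gauss}(ii) singles out $\alpha = j+1$ as the unique nonzero contribution, giving the explicit value $\varepsilon(\chi_\pi^{j+1})\, \omega(\chi_\pi^{j+1})\, \overline{\chi_\pi(r)^{j+1}}\, q^{(j+1/2)\deg\pi}$. The main obstacle is to recognize this prefactor as $q^{j\deg\pi}\, G_{\ell, j+1}(r, \pi)$; this follows by combining the standard twist identity $G_{\ell, j+1}(r, \pi) = \overline{\chi_\pi(r)^{j+1}}\, G(\chi_\pi^{j+1})$ (valid for $(r, \pi) = 1$ and $\chi_\pi^{j+1}$ nontrivial) with the uniform relation $G(\chi_\pi^{j+1}) = q^{\deg\pi/2}\, \varepsilon(\chi_\pi^{j+1})\, \omega(\chi_\pi^{j+1})$, which holds for both even and odd characters and is readable from \eqref{eq:signfe} and \eqref{epsilonfactor}. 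A final application of $\chi_{\tilde F}(\pi)^\ell = 1$ reduces the shift $r\pi^{j+(j+1)(\ell-2)}$ on the $\tilde F$-factor to $r\pi^{\ell-j-2}$, and the tail then equals $q^{j\deg\pi}\, G_{\ell, j+1}(r, \pi)\, u^{(j+1)\deg\pi} \, \psi_{r_0\pi}^{(i-(j+1)\deg\pi)}(r\pi^{\ell-j-2}, u)$, yielding \eqref{chantal3}.
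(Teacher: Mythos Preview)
Your proof is correct and follows essentially the same approach as the paper: split the sum defining $\psi_{r_0}^{(i)}$ according to the $\pi$-adic valuation of $F$, use Lemma~\ref{gauss}(i) to separate the $\pi$-power from the coprime part, and apply Lemma~\ref{gauss}(ii) to isolate the unique surviving exponent in each case. The only cosmetic difference is in the computation of $G_\ell(r\pi^j,\pi^{j+1})$ for \eqref{chantal3}: the paper expands the definition directly to obtain $q^{j\deg\pi}G_{\ell,j+1}(r,\pi)$, whereas you first invoke Lemma~\ref{gauss}(ii) and then recognize the prefactor via the relation $G(\chi_\pi^{j+1})=\varepsilon(\chi_\pi^{j+1})\omega(\chi_\pi^{j+1})q^{\deg(\pi)/2}$; both routes are equivalent.
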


\begin{proof}
We consider the first identity. By Lemma \ref{gauss}, we have that for $\pi \nmid rF$,  $G_\ell(r, \pi^k F)=0$ unless, possibly, for $k=0,1$. Writing the sum $\psi_{r_0 \pi}^{(i)}(r,u)$ as the sum over $F$ with no divisibility condition with respect to $\pi$ minus the sum with the condition that $\pi \mid F$, we have
\begin{align}\label{eq:proofidentity1}
\psi_{r_0 \pi}^{(i)}(r,u) = \psi_{r_0}^{(i)}(r, u)
- (1-q^{\ell}u^\ell)^{-1} \sum_{\substack{F \in \cM \\ (F, \pi)=(F,r_0)=1 \\ \deg(F) \equiv i - \deg(\pi) \pmod{\ell}}} G_\ell(r, \pi F) u^{\deg(F)+ \deg(\pi)}.
\end{align}
By Lemma \ref{gauss}(i),
\begin{align*}
G_\ell(r, \pi F) &= G_\ell(\pi^{\ell-2} r , F)  G_\ell(r, \pi).
\end{align*}
Incorporating the above into \eqref{eq:proofidentity1} gives \eqref{chantal1}.

 For the second identity, we have 
\begin{align*}
\psi_{r_0 \pi}^{(i)}(r \pi^{\ell-1},u)
= \psi_{r_0}^{(i)}(r \pi^{\ell-1}, u)
- (1-q^{\ell}u^\ell)^{-1} \sum_{\substack{F \in \cM \\ (F,\pi)=(F, r_0)=1 \\ \deg(F) \equiv i \pmod{\ell}}} G_\ell(r \pi^{\ell-1}, \pi^{\ell} F) u^{\deg(F)+\ell \deg(\pi)},
\end{align*}
where we have used that  for $k\not =0$, $ G_\ell(r \pi^{\ell-1}, \pi^{k} F)$ can only be nonzero for $k=\ell$.

By Lemma \ref{gauss}(i),
\begin{align*}
G_\ell(r \pi^{\ell-1}, \pi^{\ell} F)
&= - G_\ell(r \pi^{\ell-1}, F)  q^{(\ell-1)\deg(\pi)} ,
\end{align*}
and 
$$
\psi_{r_0\pi}^{(i)}(r \pi^{\ell-1},u) =  \psi_{r_0}^{(i)}(r \pi^{\ell-1}, u) +    q^{(\ell-1) \deg(\pi)} u^{\ell \deg(\pi)} \psi_{r_0 \pi}^{(i)}(r \pi^{\ell-1},u),
$$
which can be rewritten as \eqref{chantal2}.

For \eqref{chantal3}, for any $1 \leq j \leq n-2$, we have
\begin{align}\label{eq:psij}
\psi_{r_0\pi}^{(i)}(r \pi^{j},u)
= \psi_{r_0}^{(i)}(r \pi^{j}, u)
-(1-q^{\ell} u^{\ell})^{-1} \sum_{\substack{F \in \cM \\ (F,\pi)=(F,r_0)=1 \\ \deg(F) \equiv i - (j+1) \deg(\pi) \pmod{n}}} G_\ell(r \pi^{j}, \pi^{j+1} F) u^{\deg(F)+(j+1) \deg(\pi)},
\end{align}
where, again, we have used that  for $k \not =0$, $ G_\ell(r \pi^{j}, \pi^{k} F)$ can only be nonzero for $k=j+1$.

By Lemma \ref{gauss}(i),
\begin{align*}
G_\ell(r \pi^{j}, \pi^{j+1} F)
&= G_\ell(r \pi^{\ell-j-2}, F)  G_\ell(r \pi^{j}, \pi^{j+1}),
\end{align*}
where
\begin{align*} 
G_\ell(r \pi^j, \pi^{j+1}) &= \sum_{\alpha \pmod{\pi^{j+1}}} \legl{\alpha}{\pi}^{j+1} e \left( \frac{\alpha r}{\pi} \right)  = q^{j\deg(\pi)} G_{\ell, j+1}(r, \pi).
\end{align*}
Combining with \eqref{eq:psij}, we get the result. 
\end{proof}
\begin{lemma} \label{lem:argh}
Let $r_1, \dots, r_{\ell-1} \in \F_q[t]$ be square-free and pairwise coprime.
We have 
\begin{align*}
\psi_{r_1 \cdots r_{\ell-1}}^{(i)}(r_1 r_2^2 \cdots r_{\ell-1}^{\ell-1},u)
= \prod_{\pi \mid r_{\ell-1}} \left( 1 - q^{(\ell-1)\deg(\pi) } u^{\ell\deg(\pi)} \right)^{-1} \psi^{(i)}_{r_1 \cdots r_{\ell-2}}(r_1 r_2^2 \cdots r_{\ell-1}^{\ell-1},u).
\end{align*}

For $0\leq j\leq \ell-2$, we have 
\begin{align}\label{eq:general}
&\psi^{(i)}_{r_1 r_2 \cdots r_{j}} (r_1 r_2^2 \cdots r_{\ell-1}^{\ell-1}, u) = \prod_{\pi \mid r_j}
\left(1 - q^{(\ell-1)\deg(\pi)} \legl{-1}{\pi}^{j+1}u^{\ell \deg(\pi)}\right)^{-1}\nonumber \\
&\times  \sum_{d \mid r_{j}} \mu(d) q^{j \deg(d)} G_{\ell,j+1} \left( \frac{r_1 r_2^2 \cdots r_{\ell-1}^{\ell-1}}{d^{j}} , d \right) u^{(j+1) \deg(d)}  \psi^{(i-(j+1)\deg(d))}_{r_1 r_2 \cdots r_{j-1}} \left( \frac{r_1 r_2^2 \cdots r_{\ell-1}^{\ell-1}}{d^{2j-\ell+2}}, u \right).
\end{align}

\end{lemma}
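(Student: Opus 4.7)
Let $M_j := r_1 r_2 \cdots r_j$ and $R := r_1 r_2^2 \cdots r_{\ell-1}^{\ell-1}$. The first identity follows by iterating \eqref{chantal2} over the primes of $r_{\ell-1}$. Because the $r_k$ are pairwise coprime and square-free, every prime $\pi \mid r_{\ell-1}$ appears in $R$ with exponent exactly $\ell-1$, so \eqref{chantal2} applies with $r = R/\pi^{\ell-1}$ and strips off a factor $(1 - q^{(\ell-1)\deg\pi} u^{\ell\deg\pi})^{-1}$ per prime while simultaneously removing $\pi$ from the coprimality set. After all primes of $r_{\ell-1}$ have been peeled off, the claimed product formula emerges.

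For the second identity I would induct on the number $m$ of distinct primes of $r_j$. The base case $m = 0$ is trivial, as both sides collapse to $\psi^{(i)}_{M_{j-1}}(R,u)$. For the inductive step, factor $r_j = \pi \tilde r_j$ with $\pi \nmid \tilde r_j$. Since $\mathrm{ord}_\pi R = j$, applying \eqref{chantal3} (or \eqref{chantal1} when $j = 0$) expresses $\psi^{(i)}_{M_j}(R,u)$ in terms of $\psi^{(i)}_{M_{j-1}\tilde r_j}(R,u)$ and a second term whose coprimality set still contains $\pi$ but whose shift is $R\pi^{\ell-2j-2}$, in which $\pi$ has exponent $\ell-j-2$. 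Applying \eqref{chantal3} to this new term, now with the role of $j$ played by $\ell-j-2 \in [0,\ell-2]$, produces a recurrence in which $\psi^{(i)}_{M_j}(R,u)$ reappears on the right, closing the coupled system. Solving for $\psi^{(i)}_{M_j}(R,u)$ and invoking the identity
\[
G_{\ell, j+1}(r,\pi)\, G_{\ell, \ell-j-1}(r,\pi) \;=\; \legl{-1}{\pi}^{j+1} |\pi| \qquad \text{for } (r,\pi) = 1,
\]
which follows from $\overline{G_{\ell,k}(r,\pi)} = \legl{-1}{\pi}^{\ell-k} G_{\ell,\ell-k}(r,\pi)$ (obtained by conjugating the Gauss sum and substituting $a \to -a$) combined with $|G_{\ell,k}(r,\pi)|^2 = |\pi|$, yields the prefactor $1 - q^{(\ell-1)\deg\pi}\legl{-1}{\pi}^{j+1} u^{\ell\deg\pi}$.

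One then applies the inductive hypothesis to the two $\psi^{(\cdot)}_{M_{j-1}\tilde r_j}$ terms on the right, whose coprimality sets involve only $m-1$ primes from $\tilde r_j$. Combining the sums indexed by $d' \mid \tilde r_j$ with the dichotomy $d = d'$ or $d = d'\pi$ recovers the full sum $\sum_{d \mid r_j}$ in the claim, with the multiplicativity from Lemma \ref{gauss}(i) assembling the single Gauss sum $G_{\ell, j+1}(R/d^j, d)$ and the new prefactor extending $\tilde P_j$ to $P_j$. The main technical obstacle is reconciling the shifts $R$ and $R\pi^{\ell-2j-2}$ appearing on the right with the IH's natural shift $R/\pi^j$ (the "$R$" for the data with $r_j$ replaced by $\tilde r_j$). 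This requires decomposing the $\pi$-content of the summation variables $F$ (which may carry $\pi$ since $\pi \notin M_{j-1}\tilde r_j$) using the vanishing conditions from Lemma \ref{gauss}(ii), and then carefully tracking the extra $\ell$th power residue symbols introduced by \eqref{eq:GS}. The assumption $q \equiv 1 \pmod{2\ell}$ is essential throughout: it yields the symmetric reciprocity law $\legl{A}{B} = \legl{B}{A}$ and $\legl{-1}{\pi} = 1$, and the Möbius weights $\mu(d)$ in the target formula are precisely calibrated so that the stray character contributions cancel cleanly, leaving the stated expression.
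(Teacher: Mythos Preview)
Your overall strategy matches the paper's: iterate \eqref{chantal2} over the primes of $r_{\ell-1}$ for the first identity; for the second, derive a single-prime relation by applying \eqref{chantal3} twice (at exponents $j$ and $\ell-j-2$) and solving the resulting coupled system, using the identity $G_{\ell,j+1}(r,\pi)\,G_{\ell,\ell-j-1}(r,\pi)=\legl{-1}{\pi}^{j+1}|\pi|$ to produce the denominator, and then induct on the number of primes of $r_j$.

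Where you diverge is in the last paragraph. You anticipate having to reconcile the shifts $R$ and $R\pi_0^{\ell-2j-2}$ with the inductive hypothesis by decomposing the $\pi_0$-content of the summation variable $F$ and arranging a cancellation of stray characters against the M\"obius weights. That detour is unnecessary, and as written it is not a complete argument. The paper instead applies the inductive hypothesis \emph{directly} to both $\psi^{(\cdot)}_{r_1\cdots r_{j-1}\tilde r_j}(R,u)$ and $\psi^{(\cdot)}_{r_1\cdots r_{j-1}\tilde r_j}(R\pi_0^{\ell-2j-2},u)$: although \eqref{eq:general} is stated for the particular shift $r_1 r_2^2\cdots r_{\ell-1}^{\ell-1}$, its proof only uses that each prime of $r_j$ has valuation exactly $j$ in the shift, and since $\pi_0\nmid\tilde r_j$ this holds for both shifts above. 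After applying the hypothesis, the $\pi_0$-content is pulled out of the Gauss sums via \eqref{eq:GS}, giving two character factors which combine as
\[
\legl{\pi_0}{d}^{(j+1)(j+2)}\cdot\overline{\legl{\pi_0}{d}^{j(j+1)}}=\legl{\pi_0}{d}^{2(j+1)},
\]
precisely the twist in Lemma~\ref{gauss}(i) needed to merge $G_{\ell,j+1}(\cdot,d)\,G_{\ell,j+1}(\cdot,\pi_0)$ into $G_{\ell,j+1}(\cdot,d\pi_0)$. This recovers the full sum over $d\mid r_j\pi_0$ and closes the induction without any analysis of the $F$-sum. Finally, your observation that $\legl{-1}{\pi}=1$ under $q\equiv 1\pmod{2\ell}$ is correct but is not used in the paper's proof; the symbol is carried symbolically throughout.
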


\begin{proof}

We first deal with the primes dividing $r_{\ell-1}$, which is a simpler case.
Let $\pi \mid r_{\ell-1}$, and write $r_1 \cdots r_{\ell-1}^{\ell-1} =  R \pi^{\ell-1}$ with $(R, \pi)=1$. Let $R_0$ be a divisor of $R$.
Then, by equation \eqref{chantal2},
\begin{align*}
\psi_{R_0\pi}^{(i)}(R \pi^{\ell-1},u)
&= \left( 1 - q^{(\ell-1)\deg(\pi) } u^{\ell\deg(\pi)} \right)^{-1} \psi_{R_0}^{(i)}(R \pi^{\ell-1},u),
\end{align*}
and working by induction on the prime factors of $r_{\ell-1}$, we get
\begin{align*}
\psi_{r_1 \cdots r_{\ell-1}}^{(i)}(r_1 r_2^2 \cdots r_{\ell-1}^{\ell-1},u)
= \prod_{\pi \mid r_{\ell-1}} \left( 1 - q^{(\ell-1)\deg(\pi) } u^{\ell\deg(\pi)} \right)^{-1} \psi^{(i)}_{r_1 \cdots r_{\ell-2}}(r_1 r_2^2 \cdots r_{\ell-1}^{\ell-1},u).
\end{align*}

Let $0 \leq j \leq \ell-2$ and remark that  $0 \leq \ell-j-2 \leq \ell-2$ as well. Assume that $\pi$ is a prime such that $\pi \nmid r$ and $r_0\mid r$.  Then, using equation \eqref{chantal3} from Lemma \ref{relations} twice, we obtain,
\begin{align*}
\psi_{r_0 \pi}^{(i)}(r \pi^j ,u) &= \psi_{r_0}^{(i)}(r \pi^j,u)
 -  q^{j \deg(\pi)} G_{\ell, j+1}(r, \pi)
 u^{(j+1) \deg(\pi)} \psi_{r_0 \pi}^{(i-(j+1)\deg(\pi))}(r \pi^{\ell-j-2},u),    \\
 \psi_{r_0 \pi}^{(i-(j+1)\deg(\pi))}(r \pi^{\ell-j-2},u)  &=  \psi_{r_0}^{(i-(j+1) \deg(\pi))}(r \pi^{\ell-j-2},u) \\
&\quad - q^{(\ell-j-2) \deg(\pi)} G_{\ell, \ell-j-1}(r , \pi) u^{(\ell-j-1) \deg(\pi)}  \psi_{r_0 \pi}^{(i)}(r \pi^j,u).
 \end{align*}
Replacing the second equation above in the first one, we find
 \begin{align*}
 \psi_{r_0 \pi}^{(i)}(r \pi^j ,u) &= \psi_{r_0}^{(i)}(r \pi^j,u)
 -   q^{j \deg(\pi)} G_{\ell, j+1}(r, \pi)
 u^{(j+1) \deg(\pi)}   \psi_{r_0}^{(i-(j+1) \deg(\pi))}(r \pi^{\ell-j-2},u) \\
& + q^{(\ell-2)\deg(\pi)}G_{\ell, j+1} (r, \pi)  G_{\ell, \ell-j-1}(r, \pi) u^{\ell \deg(\pi)}  \psi_{r_0 \pi}^{(i)}(r \pi^j,u).
 \end{align*}
 Notice that  
 \[G_{\ell, \ell-j-1}(r, \pi)=\sum_{a\pmod{\pi}}\overline{\legl{a}{\pi}^{j+1}} \; {e_q} \Big( \frac{r a}{\pi} \Big) =\overline{G_{\ell, j+1}(-r, \pi)}= \legl{-1}{\pi}^{j+1}\overline{G_{\ell, j+1}(r, \pi)}.\]
 Therefore, by Lemma \ref{gauss} (ii),  
 \[G_{\ell, j+1} (r, \pi)  G_{\ell, \ell-j-1}(r, \pi)=\legl{-1}{\pi}^{j+1}|G_{\ell, j+1} (r, \pi) |^2=\legl{-1}{\pi}^{j+1} q^{\deg(\pi)}.\]
 Thus, we finally have 
 \begin{align} \label{eq:induction}
 \psi_{r_0 \pi}^{(i)}(r \pi^j ,u) = \frac{\psi_{r_0}^{(i)}(r \pi^j,u)
 - q^{j \deg(\pi)} G_{\ell, j+1}(r, \pi)
u^{(j+1) \deg(\pi)}   \psi_{r_0}^{(i-(j+1) \deg(\pi))}(r \pi^{\ell-j-2},u)}{1 - q^{(\ell-1)\deg(\pi)}
\legl{-1}{\pi}^{j+1}u^{\ell \deg(\pi)} }.
 \end{align}
 Notice that if we replace $j$ by $\ell-2-j$, then the denominator in the above formula is the same, since 
 \[\legl{-1}{\pi}^{j+1}=\legl{-1}{\pi}^{\ell-1-j}\]
 because the above symbol is $\pm 1$, and is therefore invariant under conjugation.

We now prove \eqref{eq:general}  by induction on the number of primes dividing $r_j$, as in \cite{hbp}, page 125. (See also \cite{DFL} for the case $\ell=3$.) If $r_j=\pi$, the result follows directly by \eqref{eq:induction}. Now assume that \eqref{eq:general} is true for $r_j$ being the product of up to $k$ primes, and replace $r_j$ by $r_j\pi_0$ (with $\pi_0\nmid r_j$). By \eqref{eq:induction} we have
\begin{align*}
&\psi^{(i)}_{r_1 r_2 \cdots r_{j}\pi_0 } (r \pi_0^j , u)
\left(1 - q^{(\ell-1)\deg(\pi_0)} \legl{-1}{\pi_0}^{j+1}u^{\ell \deg(\pi_0)}\right)\nonumber \\
&=\psi^{(i)}_{r_1 r_2 \cdots r_{j}} (r\pi_0^j, u)  - q^{j \deg(\pi_0)} G_{\ell, j+1}(r, \pi_0) u^{(j+1) \deg(\pi_0)}   \psi_{r_1\cdots r_j}^{(i-(j+1) \deg(\pi_0))}(r \pi_0^{\ell-j-2},u).
\end{align*}
Multiplying by $ \prod_{\pi \mid r_j} \left(1 - q^{(\ell-1)\deg(\pi)} \legl{-1}{\pi}^{j+1}u^{\ell \deg(\pi)}\right)
$ and applying the induction hypothesis \eqref{eq:general}, we obtain
\begin{align}\label{eq:crazyinduction}
&\psi^{(i)}_{r_1 r_2 \cdots r_{j}\pi_0 } (r\pi_0^j , u) \left(1 - q^{(\ell-1)\deg(\pi_0)} \legl{-1}{\pi_0}^{j+1}u^{\ell \deg(\pi_0)}\right)
\prod_{\pi \mid r_j} \left(1 - q^{(\ell-1)\deg(\pi)} \legl{-1}{\pi}^{j+1}u^{\ell \deg(\pi)}\right)\nonumber \\
 &=\psi^{(i)}_{r_1 r_2 \cdots r_{j}} (r\pi_0^j, u) \prod_{\pi \mid r_j} \left(1 - q^{(\ell-1)\deg(\pi)} \legl{-1}{\pi}^{j+1}u^{\ell \deg(\pi)}\right)\nonumber\\& - q^{j \deg(\pi_0)} G_{\ell, j+1}(r, \pi_0) u^{(j+1) \deg(\pi_0)}   \psi_{r_1\cdots r_j}^{(i-(j+1) \deg(\pi_0))}(r \pi_0^{\ell-j-2},u)\prod_{\pi \mid r_j} \left(1 - q^{(\ell-1)\deg(\pi)} \legl{-1}{\pi}^{j+1}u^{\ell \deg(\pi)}\right)\nonumber\\
 &= \sum_{d \mid r_{j}} \mu(d) q^{j \deg(d)} G_{\ell,j+1} \left( \frac{r\pi_0^j}{d^{j}} , d \right) u^{(j+1) \deg(d)}  \psi^{(i-(j+1)\deg(d))}_{r_1 r_2 \cdots r_{j-1}} \left( \frac{r\pi_0^j}{d^{2j-\ell+2}}, u \right)\nonumber\\
 & - q^{j \deg(\pi_0)} G_{\ell, j+1}(r, \pi_0) u^{(j+1) \deg(\pi_0)} \nonumber \\
&\times \sum_{d \mid r_{j}} \mu(d) q^{j \deg(d)} G_{\ell,j+1} \left( \frac{r\pi_0^{\ell-j-2}}{d^{j}} , d \right) u^{(j+1) \deg(d)}  \psi^{(i-(j+1)(\deg(\pi_0)+\deg(d))}_{r_1 r_2 \cdots r_{j-1}} \left( \frac{r\pi_0^{\ell-j-2}}{d^{2j-\ell+2}}, u \right)\nonumber\\
&= \sum_{d \mid r_{j}} \mu(d) q^{j \deg(d)} G_{\ell,j+1} \left( \frac{r\pi_0^j}{d^{j}} , d \right) u^{(j+1) \deg(d)}  \psi^{(i-(j+1)\deg(d))}_{r_1 r_2 \cdots r_{j-1}} \left( \frac{r\pi_0^j}{d^{2j-\ell+2}}, u \right)\nonumber\\
 & +G_{\ell, j+1}(r, \pi_0) \nonumber  \\
&\times \sum_{d \mid r_{j}} \mu(d\pi_0) q^{j \deg(d\pi_0)} G_{\ell,j+1} \left( \frac{r\pi_0^{\ell-j-2}}{d^{j}} , d \right) u^{(j+1) \deg(d\pi_0)}  \psi^{(i-(j+1)\deg(d\pi_0))}_{r_1 r_2 \cdots r_{j-1}} \left( \frac{r\pi_0^j}{(d\pi_0)^{2j-\ell+2}}, u \right).
\end{align}

Now we have 
\begin{equation}\label{eq:G1}
G_{\ell,j+1} \left( \frac{r\pi_0^{\ell-j-2}}{d^{j}} , d \right)=\overline{\legl{\pi_0^{\ell-j-2}}{d}^{j+1}} G_{\ell,j+1} \left( \frac{r}{d^{j}} , d \right)=\legl{\pi_0}{d}^{(j+2)(j+1)} G_{\ell,j+1} \left( \frac{r}{d^{j}} , d \right).
\end{equation}
and 
\begin{equation}\label{eq:G2}
G_{\ell, j+1}(r, \pi_0) =\overline{ \legl{d^j}{\pi_0}^{j+1}}
G_{\ell,j+1} \left( \frac{r}{d^{j}} , \pi_0 \right)=\overline{ \legl{\pi_0}{d}^{j(j+1)}}
G_{\ell,j+1} \left( \frac{r}{d^{j}} , \pi_0 \right).
\end{equation}

Combining \eqref{eq:G1} and \eqref{eq:G2}, we get 
\begin{align*}
 G_{\ell, j+1}(r, \pi_0) G_{\ell,j+1} \left( \frac{r\pi_0^{\ell-j-2}}{d^{j}} , d \right)=& \legl{\pi_0}{d}^{(j+2)(j+1)} \overline{ \legl{\pi_0}{d}^{j(j+1)}}
G_{\ell,j+1} \left( \frac{r}{d^{j}} , \pi_0 \right)G_{\ell,j+1} \left( \frac{r}{d^{j}} , d \right)\\
=&\legl{\pi_0}{d}^{2(j+1)} 
G_{\ell,j+1} \left( \frac{r}{d^{j}} , \pi_0 \right)G_{\ell,j+1} \left( \frac{r}{d^{j}} , d \right)\\
=&G_{\ell,j+1} \left( \frac{r}{d^{j}} , d\pi_0 \right)
\end{align*}
by Lemma \ref{gauss}(ii). Replacing this in \eqref{eq:crazyinduction}, we complete the induction step.

\end{proof}

We are now ready to prove the main result of the section. 
\begin{theorem} \label{thm:gettingridofa} Let $a, r \in \F_q[t]$, where $a$ is monic  square-free and $(r,a)=1$. We write $r = r_1 r_2^2 \cdots r_{\ell-1}^{\ell-1}r_\ell^\ell$, where the $r_j$ are square-free and pairwise coprime
for $j=1,\dots,\ell-1$, and let $r_\ell^*$ be the product of the primes dividing $r_\ell$, but not $r_1\cdots r_{\ell-1}$.
Then 
\begin{align*}
  \psi_r^{(i)}(r, a,u) =&
 G_\ell(r,a) u^{\deg(a)}\sum_{\substack{E\in \mathcal{M}\\ E \mid r_\ell^* }}\mu(E)G_\ell(a^{\ell-2}r_1\cdots r_{\ell-1}^{\ell-1},E)u^{\deg(E)}\\&\times
  \prod_{\pi \mid aEr_1\cdots r_{\ell-1}}
\left(1 - q^{(\ell-1)\deg(\pi)} \legl{-1}{\pi}^{\nu_\pi(aEr_1\cdots r_{\ell-2}r_{\ell-1})+1}u^{\ell \deg(\pi)}\right)^{-1}\nonumber \\
&\times  \sum_{\substack{d_{\ell-2} \mid aEr_{\ell-2}\\ d_1\mid r_1, \dots, d_{\ell-3}\mid r_{\ell-3}}} \mu(d_1d_2\cdots d_{\ell-2} ) q^{\deg(d_1d_2^2\cdots d_{\ell-2}^{\ell-2})}  u^{\deg(d_1^2d_2^3\cdots d_{\ell-2}^{\ell-1}  )} \prod_{1\leq k< j \leq \ell-2} \overline{\legl{d_k}{d_j}^{k(j+1)}}\\
& \times \prod_{j=1}^{\ell-2} G_{\ell,j+1} \left( \frac{a^{\ell-2}E^{\ell-2}r_1 r_2^2 \cdots r_{\ell-1}^{\ell-1}}{d_{1}d_2\cdots d_{\ell-2} ^{\ell-2}} , d_{j} \right)\\
&  \times \psi^{(i-\deg(a)-\deg(E)- \deg(d_1^2d_2^3\cdots d_{\ell-2}^{\ell-1} ))} \left( \frac{a^{\ell-2}E^{\ell-2}r_1 r_2^2 \cdots r_{\ell-1}^{\ell-1}}{\prod_{j=1}^{\ell-2}d_j^{2j+2-\ell}}, u \right).
\end{align*}
\end{theorem}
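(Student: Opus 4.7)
My plan is to prove the identity in three stages, progressively removing the divisibility condition $a\mid F$, the coprimality with the high‑power part $r_\ell^*$, and finally the coprimality with $ar_1\cdots r_{\ell-1}$. First, since $a$ is square-free with $(a,r)=1$, Lemma \ref{gauss}(ii) (with $\alpha=0$) shows that $G_\ell(r,\pi^k)=0$ for every $\pi\mid a$ and $k\geq 2$, so that $a\mid F$ together with nonvanishing of $G_\ell(r,F)$ forces $F=aG$ with $(G,a)=1$. Applying the multiplicativity in Lemma \ref{gauss}(i) gives $G_\ell(r,aG)=G_\ell(r,a)\,G_\ell(ra^{\ell-2},G)$, and therefore $\psi_r^{(i)}(r,a,u)=G_\ell(r,a)\,u^{\deg(a)}\,\psi_{ar}^{(i-\deg(a))}(ra^{\ell-2},u)$, where the new coprimality condition is $(G,ar_1\cdots r_{\ell-1}r_\ell^*)=1$.

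Next, because $(G,r_\ell)=1$ in this sum and $\chi_G$ has order $\ell$, the substitution rule $G_\ell(br,F)=\overline{\chi_F(b)}\,G_\ell(r,F)$ for $(b,F)=1$, applied with $b=r_\ell^\ell$, lets me drop the $r_\ell^\ell$ factor from the shift: $G_\ell(ra^{\ell-2},G)=G_\ell(a^{\ell-2}r_1\cdots r_{\ell-1}^{\ell-1},G)$. I then apply M\"obius inversion $[(G,r_\ell^*)=1]=\sum_{E\mid r_\ell^*}\mu(E)[E\mid G]$, write $G=EH$, and note that nonvanishing of the Gauss sum at each $\pi\mid E$ forces $\nu_\pi(EH)\in\{0,1\}$ (since the shift has $\pi$-valuation zero, Lemma \ref{gauss}(ii) leaves only these cases), and in particular $(E,H)=1$. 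Combining Lemma \ref{gauss}(i) (with $c_1=H,c_2=E$) and the substitution identity $G_\ell(E^{\ell-2}s,H)=\overline{\chi_H(E)^{\ell-2}}G_\ell(s,H)=\chi_E(H)^2\,G_\ell(s,H)$ --- where reciprocity $\chi_H(E)=\chi_E(H)$ holds because $q\equiv 1\pmod{2\ell}$ --- yields the factorization $G_\ell(a^{\ell-2}r_1\cdots r_{\ell-1}^{\ell-1},EH)=G_\ell(a^{\ell-2}r_1\cdots r_{\ell-1}^{\ell-1},E)\cdot G_\ell(a^{\ell-2}E^{\ell-2}r_1\cdots r_{\ell-1}^{\ell-1},H)$. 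Collecting terms gives
\[
\psi_r^{(i)}(r,a,u)=G_\ell(r,a)u^{\deg(a)}\!\sum_{E\mid r_\ell^*}\mu(E)\,G_\ell(a^{\ell-2}r_1\cdots r_{\ell-1}^{\ell-1},E)\,u^{\deg(E)}\,\psi^{(i-\deg(a)-\deg(E))}_{aEr_1\cdots r_{\ell-1}}\!\bigl(a^{\ell-2}E^{\ell-2}r_1\cdots r_{\ell-1}^{\ell-1},u\bigr),
\]
so that $r_\ell^\ell$ has been absorbed into $E^{\ell-2}$.

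Finally, the inner $\psi$ fits the hypotheses of Lemma \ref{lem:argh} with $\tilde r_j=r_j$ for $j\neq \ell-2$ and $\tilde r_{\ell-2}=aEr_{\ell-2}$ (square-free and pairwise coprime, since $(a,r)=1$ and $E\mid r_\ell^*$). The first identity of Lemma \ref{lem:argh} peels off the coprimality with $r_{\ell-1}$ and produces $\prod_{\pi\mid r_{\ell-1}}(1-q^{(\ell-1)\deg(\pi)}u^{\ell\deg(\pi)})^{-1}$. Iterating \eqref{eq:general} for $j=\ell-2,\ell-3,\ldots,1$ strips off each $\tilde r_j$-coprimality in turn; at step $j$ one gains the factor $\prod_{\pi\mid\tilde r_j}(1-q^{(\ell-1)\deg(\pi)}\legl{-1}{\pi}^{j+1}u^{\ell\deg(\pi)})^{-1}$, a M\"obius sum over $d_j\mid\tilde r_j$ (hence $d_{\ell-2}\mid aEr_{\ell-2}$) weighted by $\mu(d_j)q^{j\deg(d_j)}u^{(j+1)\deg(d_j)}G_{\ell,j+1}(\,\cdot\,/d_j^j,d_j)$, a division of the shift by $d_j^{2j-\ell+2}$, and an index shift by $-(j+1)\deg(d_j)$. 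Accumulating these across all $j$ produces the full product $\prod_{\pi\mid aEr_1\cdots r_{\ell-1}}(\cdots)^{-1}$, the combined divisor sum with weights $\mu(d_1\cdots d_{\ell-2})q^{\deg(d_1d_2^2\cdots d_{\ell-2}^{\ell-2})}u^{\deg(d_1^2d_2^3\cdots d_{\ell-2}^{\ell-1})}$, and the final $\psi^{(\cdots)}\!\bigl(\tfrac{a^{\ell-2}E^{\ell-2}r_1\cdots r_{\ell-1}^{\ell-1}}{\prod_j d_j^{2j+2-\ell}},u\bigr)$ with the correct residue class.

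The main technical obstacle lies in this last stage: at step $j$, $G_{\ell,j+1}$ comes with a shift that depends on the previously chosen $d_{j'}$ for $j'>j$, whereas the theorem states all these Gauss sums with the \emph{common} argument $\tfrac{a^{\ell-2}E^{\ell-2}r_1\cdots r_{\ell-1}^{\ell-1}}{d_1d_2\cdots d_{\ell-2}^{\ell-2}}$. Harmonizing them requires repeated use of $G_{\ell,j+1}(br,F)=\overline{\chi_F(b)^{j+1}}G_{\ell,j+1}(r,F)$ with $b$ built from other $d_{j'}$, and this is exactly what generates the cross-reciprocity factors $\prod_{1\leq k<j\leq\ell-2}\overline{\legl{d_k}{d_j}^{k(j+1)}}$ in the theorem. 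The conceptual crux, however, is stage 2: the absorption $r_\ell^\ell\rightsquigarrow E^{\ell-2}$ into the shift (via the substitution property combined with reciprocity, and hence requiring $q\equiv 1\pmod{2\ell}$) is what makes the entire formula collapse into standard $\psi^{(\cdot)}$ series with no coprimality subscript.
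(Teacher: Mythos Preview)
Your proposal is correct and follows essentially the same three-stage strategy as the paper: factor out $a$ via Lemma~\ref{gauss}(i), strip $r_\ell^\ell$ from the shift and reinstate it as $E^{\ell-2}$ through M\"obius inversion over $E\mid r_\ell^*$ combined with Lemma~\ref{gauss}(i), and then iterate Lemma~\ref{lem:argh} from $j=\ell-2$ down to $j=1$, harmonizing the Gauss-sum shifts at the end to produce the cross-reciprocity factors. Your exposition is in fact slightly more explicit than the paper's at a couple of points (e.g.\ why $(E,H)=1$ is forced, and the role of reciprocity in the $E^{\ell-2}$ absorption), but the argument is the same.
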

\begin{proof}
Writing $F=aF'$, we have $(a, F')=1$ since $G_\ell(r,F)=0$ when $F$ is not square-free and $(r,F)=1$. Then, using Lemma \ref{gauss}(i), we have
\begin{align*}
  \psi_r^{(i)}(r, a,u)
&= G_\ell(r,a) u^{\deg(a)} \sum_{\substack{F\in \mathcal{M}\\ \deg(F)\equiv i - \deg(a) \pmod{\ell}\\ (F,ar)=1 }}G_\ell(a^{\ell-2}r, F)u^{\deg(F)}.
\end{align*} Since $(F,r)=1$ in the above sum, we have $$G_\ell(a^{\ell-2}r,F)=\overline{\chi_F(r_{\ell}^\ell)}G_\ell(a^{\ell-2}r_1\cdots r_{\ell-1}^{\ell-1},F)=G_\ell(a^{\ell-2}r_1\cdots r_{\ell-1}^{\ell-1},F),$$
and using $(a,F)=1$, 
\begin{align*}
 &\sum_{\substack{F\in \mathcal{M}\\ \deg(F)\equiv i - \deg(a) \pmod{\ell}\\ (F,ar)=1 }}G_\ell(a^{\ell-2}r,F) u^{\deg(F)}=\sum_{\substack{F\in \mathcal{M}\\ \deg(F)\equiv i - \deg(a)\pmod{\ell}\\ (F,ar)=1 }}G_\ell(a^{\ell-2}r_1\cdots r_{\ell-1}^{\ell-1},F) u^{\deg(F)}\\
 &=\sum_{\substack{F\in \mathcal{M}\\ \deg(F)\equiv i -\deg(a) \pmod{\ell}\\ (F,ar_1\cdots r_{\ell-1})=1 }}G_\ell(a^{\ell-2}r_1\cdots r_{\ell-1}^{\ell-1},F)u^{\deg(F)} \sum_{\substack{E\in \mathcal{M}\\ E\mid (F,r_\ell^*) }}\mu(E)\nonumber\\
  &=\sum_{\substack{E\in \mathcal{M}\\ E \mid r_\ell^* }}\mu(E) u^{\deg(E)} \sum_{\substack{F\in \mathcal{M}\\ \deg(F)\equiv i-\deg(a) -\deg(E) \pmod{\ell}\\ (FE,ar_1\cdots r_{\ell-1})=1\\ (F, E)=1 }}G_\ell(a^{\ell-2}r_1\cdots r_{\ell-1}^{\ell-1},FE)u^{\deg(F)}\nonumber \\
    &=\sum_{\substack{E\in \mathcal{M}\\ E \mid r_\ell^* }}\mu(E)G_\ell(a^{\ell-2}r_1\cdots r_{\ell-1}^{\ell-1},E)u^{\deg(E)} \sum_{\substack{F\in \mathcal{M}\\ \deg(F)\equiv i-\deg(a)-\deg(E) \pmod{\ell}\\ (F,aEr_1\cdots r_{\ell-1})=1 }}G_\ell(E^{\ell-2}a^{\ell-2}r_1\cdots r_{\ell-1}^{\ell-1},F)u^{\deg(F)}, \nonumber
\end{align*}
where we used Lemma \ref{gauss}(i) again since $(F, E)=1$.
We removed the condition $(E, a r_1 \cdots r_{\ell-1})=1$  in the last line since it is guaranteed by $E \mid r_\ell^*$ and $(a,r)=1$.\\

Hence we have that 
\begin{align*}
  \psi_r^{(i)}(r, a,u)
   =& G_\ell(r,a) u^{\deg(a)}\sum_{\substack{E\in \mathcal{M}\\ E \mid r_\ell^* }}\mu(E)G_\ell(a^{\ell-2}r_1\cdots r_{\ell-1}^{\ell-1},E)u^{\deg(E)}\\&\times \psi^{(i-\deg(a)-\deg(E) )}_{aEr_1\cdots r_{\ell-1}}(E^{\ell-2}a^{\ell-2}r_1\cdots r_{\ell-1}^{\ell-1};u).
\end{align*}
Applying Lemma \ref{lem:argh}, we have 
\begin{align*}
  \psi_r^{(i)}(r, a,u) =& G_\ell(r,a) u^{\deg(a)}\sum_{\substack{E\in \mathcal{M}\\ E \mid r_\ell^* }}\mu(E)G_\ell(a^{\ell-2}r_1\cdots r_{\ell-1}^{\ell-1},E)u^{\deg(E)}\\&\times
  \prod_{\pi \mid r_{\ell-1}} \left(1 - q^{(\ell-1)\deg(\pi)}u^{\ell \deg(\pi)}\right)^{-1}\psi^{(i-\deg(a)-\deg(E) )}_{aEr_1\cdots r_{\ell-2}}(E^{\ell-2}a^{\ell-2}r_1\cdots r_{\ell-1}^{\ell-1};u)\\
   =& G_\ell(r,a) u^{\deg(a)}\sum_{\substack{E\in \mathcal{M}\\ E \mid r_\ell^* }}\mu(E)G_\ell(a^{\ell-2}r_1\cdots r_{\ell-1}^{\ell-1},E)u^{\deg(E)}\\&\times
  \prod_{\pi \mid r_{\ell-1}} \left(1 - q^{(\ell-1)\deg(\pi)}u^{\ell \deg(\pi)}\right)^{-1}\prod_{\pi \mid aEr_{\ell-2}}
\left(1 - q^{(\ell-1)\deg(\pi)} \legl{-1}{\pi}^{\ell-1}u^{\ell \deg(\pi)}\right)^{-1}\nonumber \\
&\times  \sum_{d_{\ell-2} \mid aEr_{\ell-2}} \mu(d_{\ell-2} ) q^{(\ell-2) \deg(d)} G_{\ell,\ell-1} \left( \frac{a^{\ell-2}E^{\ell-2}r_1 r_2^2 \cdots r_{\ell-1}^{\ell-1}}{d_{\ell-2} ^{\ell-2}} , d \right) u^{(\ell-1) \deg(d_{\ell-2})}\\&  \times \psi^{(i-\deg(a)-\deg(E)-(\ell-1)\deg(d_{\ell-2})}_{r_1 r_2 \cdots r_{\ell-3}} \left( \frac{a^{\ell-2}E^{\ell-2}r_1 r_2^2 \cdots r_{\ell-1}^{\ell-1}}{d_{\ell-2} ^{\ell-2}}, u \right)\\
  =& G_\ell(r,a) u^{\deg(a)}\sum_{\substack{E\in \mathcal{M}\\ E \mid r_\ell^* }}\mu(E)G_\ell(a^{\ell-2}r_1\cdots r_{\ell-1}^{\ell-1},E)u^{\deg(E)}\\&\times
  \prod_{\pi \mid r_{\ell-1}} \left(1 - q^{(\ell-1)\deg(\pi)}u^{\ell \deg(\pi)}\right)^{-1}\prod_{\pi \mid aEr_{\ell-2}}
\left(1 - q^{(\ell-1)\deg(\pi)} \legl{-1}{\pi}^{\ell-1}u^{\ell \deg(\pi)}\right)^{-1}\nonumber \\
&\times \prod_{\pi \mid r_{\ell-3}} 
\left(1 - q^{(\ell-1)\deg(\pi)} \legl{-1}{\pi}^{\ell-2}u^{\ell\deg(\pi)}\right)^{-1}\nonumber \\
&\times  \sum_{d_{\ell-2} \mid aEr_{\ell-2}} \mu(d_{\ell-2} ) q^{(\ell-2) \deg(d_{\ell-2})} G_{\ell,\ell-1} \left( \frac{a^{\ell-2}E^{\ell-2}r_1 r_2^2 \cdots r_{\ell-1}^{\ell-1}}{d_{\ell-2} ^{\ell-2}} , d_{\ell-2} \right) u^{(\ell-1) \deg(d_{\ell-2})}\\
& \times  \sum_{d_{\ell-3} \mid r_{\ell-3}} \mu(d_{\ell-3} ) q^{(\ell-3) \deg(d_{\ell-3})} G_{\ell,\ell-2} \left( \frac{a^{\ell-2}E^{\ell-2}r_1 r_2^2 \cdots r_{\ell-1}^{\ell-1}}{d_{\ell-3} ^{\ell-3}d_{\ell-2} ^{\ell-2}} , d_{\ell-3} \right) u^{(\ell-2) \deg(d_{\ell-3})}\\
&  \times \psi^{(i-\deg(a)-\deg(E)-(\ell-1)\deg(d_{\ell-2})-(\ell-2)\deg(d_{\ell-3})}_{r_1 r_2 \cdots r_{\ell-4}} \left( \frac{a^{\ell-2}E^{\ell-2}r_1 r_2^2 \cdots r_{\ell-1}^{\ell-1}}{d_{\ell-3} ^{\ell-4}d_{\ell-2} ^{\ell-2}}, u \right).
\end{align*}
Continuing in this way, we get 
\begin{align*}
  \psi_r^{(i)}(r, a,u) =&
 G_\ell(r,a) u^{\deg(a)}\sum_{\substack{E\in \mathcal{M}\\ E \mid r_\ell^* }}\mu(E)G_\ell(a^{\ell-2}r_1\cdots r_{\ell-1}^{\ell-1},E)u^{\deg(E)}\\&\times
  \prod_{\pi \mid aEr_1\cdots r_{\ell-1}}
\left(1 - q^{(\ell-1)\deg(\pi)} \legl{-1}{\pi}^{\nu_\pi(aEr_1\cdots r_{\ell-2}r_{\ell-1})+1}u^{\ell \deg(\pi)}\right)^{-1}\nonumber \\
&\times  \sum_{d_{\ell-2} \mid aEr_{\ell-2}} \mu(d_{\ell-2} ) q^{(\ell-2) \deg(d_{\ell-2})} G_{\ell,\ell-1} \left( \frac{a^{\ell-2}E^{\ell-2}r_1 r_2^2 \cdots r_{\ell-1}^{\ell-1}}{d_{\ell-2} ^{\ell-2}} , d_{\ell-2} \right) u^{(\ell-1) \deg(d_{\ell-2})}\\
& \times  \sum_{d_{\ell-3} \mid r_{\ell-3}} \mu(d_{\ell-3} ) q^{(\ell-3) \deg(d_{\ell-3})} G_{\ell,\ell-2} \left( \frac{a^{\ell-2}E^{\ell-2}r_1 r_2^2 \cdots r_{\ell-1}^{\ell-1}}{d_{\ell-3} ^{\ell-3}d_{\ell-2} ^{\ell-2}} , d_{\ell-3} \right) u^{(\ell-2) \deg(d_{\ell-3})}\\
& \times \dots \\
& \times  \sum_{d_{1} \mid r_{1}} \mu(d_{1} ) q^{\deg(d_1)} G_{\ell,2} \left( \frac{a^{\ell-2}E^{\ell-2}r_1 r_2^2 \cdots r_{\ell-1}^{\ell-1}}{d_{1}\cdots d_{\ell-2} ^{\ell-2}} , d_{1} \right) u^{2 \deg(d_1)}\\
&  \times \psi^{(i-\deg(a)-\deg(E)-(\ell-1)\deg(d_{\ell-2})-(\ell-2)\deg(d_{\ell-3})-\cdots -2d_1)} \left( \frac{a^{\ell-2}E^{\ell-2}r_1 r_2^2 \cdots r_{\ell-1}^{\ell-1}}{\prod_{j=1}^{\ell-2}d_j^{2j+2-\ell}}, u \right).
\end{align*}
Writing 
\[G_{\ell,j+1} \left( \frac{a^{\ell-2}E^{\ell-2}r_1 r_2^2 \cdots r_{\ell-1}^{\ell-1}}{d_{j}^j\cdots d_{\ell-2} ^{\ell-2}} , d_{j} \right)
=\overline{\legl{d_1\cdots d_{j-1}^{j-1}}{d_j}^{j+1}}G_{\ell,j+1} \left( \frac{a^{\ell-2}E^{\ell-2}r_1 r_2^2 \cdots r_{\ell-1}^{\ell-1}}{d_{1}d_2^2\cdots d_{\ell-2} ^{\ell-2}} , d_{j} \right),\]
we can further simplify
\begin{align*}
  \psi_r^{(i)}(r, a,u) =&
 G_\ell(r,a) u^{\deg(a)}\sum_{\substack{E\in \mathcal{M}\\ E \mid r_\ell^* }}\mu(E)G_\ell(a^{\ell-2}r_1\cdots r_{\ell-1}^{\ell-1},E)u^{\deg(E)}\\&\times
  \prod_{\pi \mid aEr_1\cdots r_{\ell-1}}
\left(1 - q^{(\ell-1)\deg(\pi)} \legl{-1}{\pi}^{\nu_\pi(aEr_1\cdots r_{\ell-2}r_{\ell-1})+1}u^{\ell \deg(\pi)}\right)^{-1}\nonumber \\
&\times  \sum_{\substack{d_{\ell-2} \mid aEr_{\ell-2}\\ d_1\mid r_1, \dots, d_{\ell-3}\mid r_{\ell-3}}} \mu(d_1d_2\cdots d_{\ell-2} ) q^{\deg(d_1d_2^2\cdots d_{\ell-2}^{\ell-2})}  u^{\deg(d_1^2d_2^3\cdots d_{\ell-2}^{\ell-1}  )} \prod_{1\leq \ell< j \leq n-2} \overline{\legl{d_\ell}{d_j}^{\ell(j+1)}}\\
& \times \prod_{j=1}^{\ell-2} G_{\ell,j+1} \left( \frac{a^{\ell-2}E^{\ell-2}r_1 r_2^2 \cdots r_{\ell-1}^{\ell-1}}{d_{1}d_2\cdots d_{\ell-2} ^{\ell-2}} , d_{j} \right)\\
&  \times \psi^{(i-\deg(a)-\deg(E)-\deg(d_1^2d_2^3\cdots d_{\ell-2}^{\ell-1} ))} \left( \frac{a^{\ell-2}E^{\ell-2}r_1 r_2^2 \cdots r_{\ell-1}^{\ell-1}}{\prod_{j=1}^{\ell-2}d_j^{2j+2-\ell}}, u \right).
\end{align*}
  \end{proof}

\begin{corollary}
\label{cor_convexity}
 Let $a, r \in \F_q[t]$, where $a$ is monic  square-free and $(r,a)=1$. We write $r = r_1 r_2^2 \cdots r_{\ell-1}^{\ell-1}r_\ell^\ell$, where the $r_j$ are square-free and pairwise coprime for $j=1,\dots,\ell-1$.
Let $u=q^{-s}$ and $\sigma=\re(s)$, where $1 \leq \sigma < \frac32$.
Then,
$$
 |  \psi_r^{(i)}(r, a,q^{-s}) |  \ll \begin{cases}  |a r|^{\varepsilon} |a|^{\ell \left( \frac34 - \frac{\sigma}{2} \right) - 1} |r|^{(\ell+1) \left( \frac34 - \frac{\sigma}{2} \right)  - \frac{3}{2}} &
 \mbox{for $ \ell  > \frac{6}{3 - 2 \sigma},$}\\
  |a r|^{\varepsilon} |a|^{\ell \left( \frac34 - \frac{\sigma}{2} \right) - 1} |r|^{ \left( \frac34 - \frac{\sigma}{2} \right)}  & \mbox{otherwise.} \end{cases}
$$
\end{corollary}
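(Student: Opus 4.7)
The plan is to expand $\psi_r^{(i)}(r,a,u)$ using Theorem \ref{thm:gettingridofa}, bound each factor pointwise, and apply the convexity estimate of Theorem \ref{residue} to the innermost $\psi^{(i')}$ term. Write $\alpha := \tfrac{3}{4} - \tfrac{\sigma}{2} \in [0,\tfrac14]$ for $1 \leq \sigma < \tfrac32$, and $R := r_1 r_2^2 \cdots r_{\ell-1}^{\ell-1}$, so $|R|\cdot|r_\ell|^\ell = |r|$. The target is $|ar|^\varepsilon |a|^{\ell\alpha - 1}|r|^{(\ell+1)\alpha - 3/2}$ when $\ell\alpha > 3/2$ (equivalently $\ell > 6/(3-2\sigma)$), and $|ar|^\varepsilon |a|^{\ell\alpha - 1}|r|^\alpha$ otherwise.

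The key pointwise bounds are $|G_\ell(r,a)u^{\deg(a)}| = |a|^{1/2-\sigma}$ from Corollary \ref{size-GS}, $|G_\ell(\cdot,E)u^{\deg(E)}| \leq |E|^{1/2-\sigma}$, and $|G_{\ell,j+1}(\cdot,d_j)| = |d_j|^{1/2}$ from Lemma \ref{gauss}; the coprimality of the first argument to $d_j$ in each Gauss sum follows because the factor $d_j \mid r_j$ (or its parts dividing $a$ or $E$, for $j=\ell-2$) cancels the corresponding power in the numerator of the argument. Since $\sigma \geq 1$, we have $|q^{(\ell-1)\deg(\pi)}u^{\ell\deg(\pi)}| \leq q^{-\deg(\pi)}$, so the local factor product is $\ll |ar|^\varepsilon$. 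Finally, Theorem \ref{residue} bounds the inner $\psi^{(i')}$ term by $\bigl(|a|^{\ell-2}|E|^{\ell-2}|R|\prod_j |d_j|^{\ell-2j-2}\bigr)^{\alpha+\varepsilon}$. A routine exponent computation (the exponent of $|a|$ is $(1/2-\sigma) + (\ell-2)\alpha = \ell\alpha - 1$ and of $|d_j|$ is $(j+1/2-(j+1)\sigma) + (\ell-2j-2)\alpha = \ell\alpha - j/2 - 1$) shows that each term in the expansion of Theorem \ref{thm:gettingridofa} is bounded by
\[
|ar|^\varepsilon \, |a|^{\ell\alpha - 1} \, |R|^\alpha \, |E|^{\ell\alpha - 1} \, \prod_{j=1}^{\ell-2}|d_j|^{\ell\alpha - j/2 - 1}.
\]

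Since the divisor counts are $\ll |ar|^\varepsilon$, it remains to maximize this expression over the choices of $E$ and the $d_j$'s. Using $|R|^\alpha |E|^{\ell\alpha-1} = (|R||E|^\ell)^\alpha |E|^{-1}$ together with $|R||E|^\ell \leq |R||r_\ell|^\ell = |r|$ gives $|R|^\alpha |E|^{\ell\alpha-1} \leq |r|^\alpha$. For $d_{\ell-2}$ (whose range includes divisors of $aE r_{\ell-2}$), the exponent $\ell\alpha - \ell/2$ is at most $-\ell/4 < 0$ for $\sigma \geq 1$, so its contribution is $\leq 1$. For the remaining $d_j$ with $j \leq \ell-3$, a weighted inequality (using $\deg(d_j) \leq \deg(r_j)$ together with $\sum_j j \deg(r_j) = \deg(R)$) gives $\prod_{j=1}^{\ell-3}|d_j|^{\max(0, \ell\alpha - j/2 - 1)} \leq |R|^{\max_j (\ell\alpha - j/2 - 1)/j}$. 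Since $j \mapsto (\ell\alpha - j/2 - 1)/j$ is decreasing for $\ell\alpha > 1$, its maximum is at $j = 1$, with value $\ell\alpha - 3/2$. When $\ell\alpha > 3/2$ this produces an extra factor $|r|^{\ell\alpha - 3/2}$ and yields the claimed bound; when $\ell\alpha \leq 3/2$, every exponent is non-positive and only $|a|^{\ell\alpha - 1}|r|^\alpha$ remains.

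The main obstacle is the combinatorial bookkeeping: verifying coprimality for every Gauss-sum absolute-value estimate arising in the formula from Theorem \ref{thm:gettingridofa}, tracking the contributions of $a, R, E$, and each $d_j$ to the final exponents, correctly handling the extended range of $d_{\ell-2}$ over divisors of $aEr_{\ell-2}$, and pinpointing $j=1$ as the dominant divisor sum in the regime $\ell\alpha > 3/2$.
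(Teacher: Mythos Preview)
Your proposal is correct and follows essentially the same approach as the paper's own proof: expand via Theorem~\ref{thm:gettingridofa}, bound each Gauss sum by Corollary~\ref{size-GS}/Lemma~\ref{gauss}, apply the convexity bound of Theorem~\ref{residue} to the inner $\psi^{(i')}$, and maximize over the divisor parameters $E,d_1,\dots,d_{\ell-2}$. The only cosmetic difference is in the last optimization step: the paper observes directly that $-j/2-1\le -3/2$ for all $j\ge 1$, hence $\prod_j|d_j|^{\ell\alpha-j/2-1}\le |d_1\cdots d_{\ell-2}|^{\ell\alpha-3/2}\le |r|^{\ell\alpha-3/2}$, whereas you phrase this as a weighted inequality with $\sum_j j\deg(r_j)=\deg(R)$ and identify $j=1$ as the extremal index; both routes are equivalent and yield the same exponent.
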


\begin{proof} Recall that $r_\ell^*$ denotes the product of the primes dividing $r_\ell$ but not $r_1\cdots r_{\ell-1}$. Since $a, r_1, \dots, r_{\ell-1}, r_\ell^*$ are square-free, we can use Corollary \ref{size-GS} to bound the Gauss sums in the formula of Theorem \ref{thm:gettingridofa}, and we get
\begin{align*}
|  \psi_r^{(i)}(r, a,q^{-s}) | &\ll |a|^{\frac12 - \sigma}
\sum_{\substack{E\in \mathcal{M}\\ E \mid r_\ell^* }}  |E|^{\frac12 - \sigma}
 \sum_{\substack{d_{\ell-2} \mid aEr_{\ell-2}\\ d_1\mid r_1, \dots, d_{\ell-3}\mid r_{\ell-3}}}
\prod_{j=1}^{\ell-2} |d_j|^{\frac12+j-(j+1) \sigma} \\
& \quad\quad \times \max_{\substack{0\leq i \leq \ell-1}} \left| \psi^{(i)} \left( \frac{a^{\ell-2} E^{\ell-2}r_1 r_2^2 \cdots r_{\ell-1}^{\ell-1}}{\prod_{j=1}^{\ell-2}d_j^{2j+2-\ell}}, q^{-s} \right) \right|.
\end{align*}
The number of terms in the $E$-sum is bounded by $|r_\ell|^\varepsilon$, and the number of terms in the $(d_1, \dots, d_{\ell-2})$-sum is bounded by $|a r_1 \cdots r_{\ell-2} r_\ell|^\varepsilon.$ Bounding each sum by the number of terms times the maximal value of the terms, we get
\begin{align*}
|  \psi_r^{(i)}(r, a,q^{-s}) | &\ll  |ar|^\varepsilon |a|^{\frac12 - \sigma}
 \max_{\substack{d_{\ell-2} \mid a r_\ell^* r_{\ell-2}\\ d_1\mid r_1, \dots, d_{\ell-3}\mid r_{\ell-3}}}
\prod_{j=1}^{\ell-2} |d_j|^{\frac12+j-(j+1) \sigma} \\
 & \quad \quad \times \max_{\substack{E \mid r_\ell^* \\0\leq i \leq \ell-1}} \left| \psi^{(i)} \left( \frac{a^{\ell-2}E^{\ell-2}r_1 r_2^2 \cdots r_{\ell-1}^{\ell-1}}{\prod_{j=1}^{\ell-2}d_j^{2j+2-\ell}}, q^{-s} \right) \right|.
 \end{align*}
Applying Theorem \ref{residue},
 \begin{align*}
 |  \psi_r^{(i)}(r, a,q^{-s}) | 
 &\ll  |ar|^\varepsilon |a|^{\frac12 - \sigma}
 \max_{\substack{d_{\ell-2} \mid a r_\ell^* r_{\ell-2}\\ d_1\mid r_1, \dots, d_{\ell-3}\mid r_{\ell-3}\\ E \mid r_\ell^*}}
 \prod_{j=1}^{\ell-2} |d_j|^{\frac12+j-(j+1) \sigma}  \left| \frac{a^{\ell-2}E^{\ell-2}r_1 r_2^2 \cdots r_{\ell-1}^{\ell-1}}{\prod_{j=1}^{\ell-2}d_j^{2j+2-\ell}} \right|^{\frac12 (\frac32 - \sigma)}\\
 &\ll  |ar|^\varepsilon |a|^{\ell \left( \frac34 - \frac{\sigma}{2} \right) -1 } \left|r_1 r_2^2 \cdots r_{\ell-1}^{\ell-1} r_\ell^{\ell-2}\right|^{\frac12 (\frac32 - \sigma)} \\
 & \;\;\;\; \times  \max_{\substack{d_{\ell-2} \mid a r_\ell^* r_{\ell-2}\\ d_1\mid r_1, \dots, d_{\ell-3}\mid r_{\ell-3}}}
|d_1 \cdots d_{\ell-2}|^{\ell \left( \frac34 - \frac{\sigma}{2} \right) }  \; {\prod_{j=1}^{\ell-2} |d_j|^{-\frac{j}{2}-1}} .
\end{align*}
We have
$$
\max_{\substack{d_{\ell-2} \mid a r_\ell^* r_{\ell-2}\\ d_1\mid r_1, \dots, d_{\ell-3}\mid r_{\ell-3}}}
|d_1 \cdots d_{\ell-2}|^{\ell \left( \frac34 - \frac{\sigma}{2} \right) }  \; {\prod_{j=1}^{\ell-2} |d_j|^{-\frac{j}{2}-1}}  \leq \begin{cases} |r|^{\ell \left( \frac34 - \frac{\sigma}{2} \right) - \frac32} & \ell \left( \tfrac34 - \tfrac{\sigma}{2} \right) - \tfrac32 > 0, \\ 1 & \ell \left( \tfrac34 - \tfrac{\sigma}{2} \right) - \tfrac32 \leq 0. \end{cases}
$$
Replacing in the last equation, this completes the proof.
\end{proof}

\section{Lindel\"{o}f on average-type bounds}
\label{section_lov}

We will prove the following Lindel\"{o}f on average bound for $\psi^{(i)}$.

\begin{proposition}
\label{lov}
Let $|z|=q^{-\sigma}$ with $1 < \sigma \leq 4/3$ and $i \in \{0,1,\ldots, \ell-1\}$. Further assume that  $|z^{\ell}-q^{-\ell-1}| > \delta$ for some $\delta>0$. For $h \in \mathbb{F}_q[t]$ and $j \geq 1$, we have
\begin{align*}
  \sum_{V \in \mathcal{M}_n} \Big| \psi^{(i)} (hV^j, z ) \Big|^2 & \ll |h|^{\varepsilon} q^{\varepsilon n} \Bigg( q^n + \Big(q^{n(\frac{1}{3}+\frac{3j}{2}-j\sigma)} |h|^{3/2-\sigma} +  q^{n(\frac{2}{3}+\frac{4j}{3}-j\sigma)} |h|^{4/3-\sigma}\Big) \\
  & \times \max \Big\{1, q^{jn(\sigma-7/6)} |h|^{\sigma-7/6} \Big\}\Bigg) .
  \end{align*}
\end{proposition}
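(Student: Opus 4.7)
The plan is to replace $\psi^{(i)}(hV^j, z)$ by short sums of Gauss sums via the approximate functional equation (Theorem~\ref{afe}), and then to estimate the resulting mean square in $V$ using the large sieve inequality of Theorem~\ref{largesieve2}. This yields a Lindel\"of-on-average estimate that improves on the square of the pointwise convexity bound from Theorem~\ref{residue}.

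First, I would apply Theorem~\ref{afe} with $r = hV^j$ (so $\deg r = \deg h + jn$) and symmetric truncation parameter $n_0 = (\deg h + jn)/2$, which gives
\begin{equation*}
(1 - q^{\ell+1} z^\ell)\,\psi^{(i)}(hV^j, z) = T_1(V) + T_2(V) + T_3(V),
\end{equation*}
where $T_1(V)$ is a sum of $G_\ell(hV^j, F)\,z^{\deg F}$ of length at most $n_0$ with $O(1)$-coefficients, and $T_2(V), T_3(V)$ are dual sums of length at most $\deg r - n_0$ weighted by $(qz)^{\deg r}(q^2z)^{-\deg F}$, with $T_2$ carrying an additional $b_2 = O(q^{(\sigma-1)(\deg r - n_0 - \deg F)})$.

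Second, for each $T_k$ I would express the sum in large-sieve form. By Lemma~\ref{gauss}(ii), $G_\ell(hV^j, F)$ vanishes unless the part of $F$ coprime to $hV$ is square-free; and for such $F$ with $(F, hV)=1$, Corollary~\ref{size-GS} together with $G_\ell(hV^j, F) = \overline{(hV^j/F)_\ell}\,G_\ell(F) = \overline{(h/F)_\ell^j (V/F)_\ell^j}\,G_\ell(F)$ isolates the $V$-dependence in the character $\overline{(V/F)_\ell^j}$. Since $q \equiv 1 \pmod{2\ell}$, $\ell$-th power reciprocity gives $(V/F)_\ell = (F/V)_\ell$, placing us in the hypothesis of Theorem~\ref{largesieve2}. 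Setting $\lambda_F = \overline{(h/F)_\ell^j}\,G_\ell(F)\,z^{\deg F}\,b_1\cdot \mathbf{1}_{F \in \mathcal{H},\,(F,h)=1}$ and applying Theorem~\ref{largesieve2} with outer variable $V \in \mathcal{M}_n$ gives
\begin{equation*}
\sum_{V \in \mathcal{M}_n} |T_1(V)|^2 \ll q^{\varepsilon n}|h|^{\varepsilon}\bigl(q^n + q^{n_0 + n/3} + q^{2(n+n_0)/3}\bigr)\sum_{F} |\lambda_F|^2,
\end{equation*}
and since $\sum_{F \in \mathcal{H}_d} |G_\ell(F)|^2 q^{-2\sigma d} = q^{d(2-2\sigma)}$ is geometrically decreasing for $\sigma > 1$, the inner sum is $O(1)$. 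Analogous bounds hold for $T_2, T_3$ with $n_0$ replaced by $\deg r - n_0$; the weight $|qz|^{2\deg r}|q^2z|^{-2\deg F}$ telescopes (using $2(1-\sigma)\deg r + (\deg r - n_0)(2\sigma-2) = n_0(2-2\sigma) \leq 0$), and the $b_2$ weight contributes the extra factor $(q^{jn}|h|)^{\sigma - 7/6}$ exactly when $\sigma > 7/6$. The non-generic cases ($F$ not square-free, or $(F,hV)>1$) are handled by splitting $F$ multiplicatively via Lemma~\ref{gauss}(i), and contribute at most a further $q^{\varepsilon n}|h|^\varepsilon$.

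Third, substituting $n_0 = (\deg h + jn)/2$ produces the two nontrivial exponents
\[
n/3 + (\deg h + jn)(3/2 - \sigma) \quad \text{and} \quad 2n/3 + (\deg h + jn)(4/3 - \sigma),
\]
which are precisely $q^{n/3 + 3jn/2 - jn\sigma}|h|^{3/2 - \sigma}$ and $q^{2n/3 + 4jn/3 - jn\sigma}|h|^{4/3 - \sigma}$ in the statement; the baseline $q^n$ absorbs both the diagonal contribution and the trivial estimate.

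The main obstacle will be the behaviour of the effective character $(V/F)_\ell^j$: when $\gcd(j,\ell) > 1$ its order drops to $\ell/\gcd(j,\ell)$, and when $\ell \mid j$ it becomes principal. Since the bounds of Theorem~\ref{largesieve2} are uniform in $\ell$, the same estimates apply in the first case, and in the degenerate second case $\psi^{(i)}(hV^j, z)$ depends on $V$ only through the coprimality condition, making the trivial bound $|T_1(V)|^2 \ll |h|^\varepsilon q^{\varepsilon n}$ suffice and contributing only to the $q^n$ term. A subsidiary difficulty is the bookkeeping of the multiplicative splitting of $F$ and of the weight $b_2$ in $T_2$ across the regime change at $\sigma = 7/6$, which explains the piecewise structure of the $\max$ factor in the statement.
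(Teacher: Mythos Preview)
Your overall strategy—approximate functional equation plus the large sieve of Theorem~\ref{largesieve2}—is exactly the paper's, and your computation of the two main exponents $n(1/3+3j/2-j\sigma)$ and $n(2/3+4j/3-j\sigma)$ via $n_0=(\deg h+jn)/2$ is correct for the part of $T_1(V)$ coming from $F$ square-free with $(F,hV)=1$. The gap is in what you call the ``non-generic cases''.

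You cannot dismiss the terms with $(F,hV)>1$ as contributing only a factor $q^{\varepsilon n}|h|^\varepsilon$. The obstruction is that your sieve coefficients $\lambda_F$ must be independent of $V$, but the set $\{F:(F,V)>1\}$ depends on $V$; you cannot simply delete those $F$ from the inner sum and bound the remainder pointwise, because pointwise the remainder can be as large as $T_1(V)$ itself. The paper handles this as follows: write $F=WF'$ with $W\mid (hV)^\infty$ and $(F',hV)=1$, apply Cauchy--Schwarz in $W$ (the sum $\sum_W |G_\ell(hV^j,W)|^2|W|^{-2\sigma}\ll |hV|^\varepsilon$ is easy), then parametrise $W=ab^2$ with $a$ square-free and set $Q=ab/(ab,h)$, so that $Q\mid V^j$ forces $\mathrm{rad}(Q)\mid V$; writing $V=\mathrm{rad}(Q)\,v$ one now applies Theorem~\ref{largesieve2} to the $v$-sum of length $n-\deg\mathrm{rad}(Q)$ against $F'$ of length $\leq n_0-\deg(ab^2)$, with coefficients now genuinely independent of $v$. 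This produces the three large-sieve terms with extra factors of $|\mathrm{rad}(Q)|$ and $|ab^2|$, and one must then sum over $a,b$. That sum is the most delicate step in the proof; it is carried out via generating series and yields, for the second and third large-sieve terms, an extra factor of precisely $\max\{1,q^{(jn+\deg h)(\sigma-7/6)}\}$.

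In particular, the $\max$ factor in the statement does \emph{not} come from the weight $b_2$ in $T_2$. The paper's bounds for $\sum_V|T_2|^2$ and $\sum_V|T_3|^2$ do not carry this factor at all; it arises solely from the $a,b$-sum in the $T_1$ analysis (and would vanish if your claim about the non-generic cases were correct). Your attribution of the regime change at $\sigma=7/6$ to $b_2$ is therefore mistaken; the transition occurs in the estimate $\sum_{\deg(ab^2)\leq n_0} |\mathrm{rad}(Q)|^{-\Delta}|ab^2|^{-\delta}$ according to the sign of $\Delta+2\delta-1$, with $(\Delta,\delta)=(1/3,3-2\sigma)$ and $(2/3,8/3-2\sigma)$ both giving $(1-\Delta-\delta)/2=\sigma-7/6$.
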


\begin{proof} The first steps are as in the proof from \cite[Lemma 6.5]{ddds}.
In Theorem \ref{afe}, let $T_1$ denote the first term, $T_2$ the second, and $T_3$ the third. Note that it is enough to show the bound for
$$\sum_{V \in \mathcal{M}_n} |T_k(hV^j,z)|^2,$$
for $k=1,2,3$. We will bound the term corresponding to $k=1$ and only sketch the proof for $k=2,3$, since they are similar. Using the approximate functional equation from Theorem \ref{afe}, we have to bound
\begin{align}
\label{tobound}
 \sum_{V \in \mathcal{M}_n} \Bigg| \sum_{\deg(F) \leq [\frac{\deg(hV^j)}{2}]} G_{\ell}(hV^j,F)b_1(\deg(F),z) z^{\deg(F)} \Bigg|^2,
 \end{align} where we interpret the coefficient $b_1(\deg(F),z)$ as being $0$ if $\deg(F) \not \equiv i \pmod \ell$. We write $F=W F'$ where $W|(hV)^{\infty}$ and $(F', hV)=1$. We have
$$ G_{\ell} (hV^j, WF') = G_{\ell} (hV^j,W) G_{\ell} (hV^j,F') \Big( \frac{F'}{W} \Big)_{\ell}^2.$$
Note that in order for $G_{\ell}(hV^j,W)$ to be non-zero, we need $W|(hV^j)^{2}$. Using the Cauchy--Schwarz inequality and equation \eqref{eq:GS}, we have
\begin{align}
\Bigg|& \sum_{\deg(F)  \leq [\frac{\deg(hV^j)}{2}]} G_{\ell}(hV^j,F)b_1(\deg(F),z) z^{\deg(F)} \Bigg|^2 \leq \sum_{W|(hV^j)^2} \frac{|G_{\ell}(hV^j,W)|^2}{|W|^{2\sigma}} \label{sumw} \\
& \times  \sum_{W|(hV^j)^2} \Bigg| \sum_{\substack{\deg(F') \leq [\deg(hV^j)/2]-\deg(W) \\ (F', hV)=1}} G_{\ell}(1,F') \Big(  \frac{F'}{W}\Big)_{\ell}^2 \overline{\Big( \frac{F'}{hV^j}\Big)_{\ell} } z^{\deg(F')} b_1(\deg(WF'),z) \Bigg|^2. \nonumber
\end{align}
We note that the first sum over $W$ is bounded by $|hV|^{\varepsilon}$, which follows from the fact that $\sigma > 1$ and $|G_{\ell}(hV^j, W)| \leq |W|$. We then get that
\begin{align*}
\eqref{tobound} \ll |h|^{\varepsilon} q^{\varepsilon n} \sum_{V \in \mathcal{M}_n}  \sum_{W|(hV^j)^2} \Bigg| \sum_{\substack{\deg(F') \leq [\deg(hV^j)/2]-\deg(W) \\ (F', hV)=1}} G_{\ell}(1,F') \Big(  \frac{F'}{W}\Big)_{\ell}^2 \overline{\Big( \frac{F'}{hV^j}\Big)_{\ell} } z^{\deg(F')} b_1(\deg(WF'),z) \Bigg|^2.
\end{align*}
 We write $W=ab^2$ with $a$ square-free. Then it follows that $ab|hV^j$, and $\deg(ab^2) \leq [(jn+\deg(h))/2]$ since $W | F$.
 Let $Q=ab/(h,ab)$. It follows that $Q|V^{j}$ and we write $V=\text{rad}(Q)v$, where $\text{rad}(Q)$ denotes the product of the primes dividing $Q$. Then from the previous equation, we get that
 \begin{align*}
 \eqref{tobound} & \ll |h|^{\varepsilon} q^{\varepsilon n} \sum_{\substack{a \in \mathcal{H} \\ \deg(ab^2) \leq [(jn+\deg(h))/2]}} \sum_{v \in \mathcal{M}_n - \deg(\text{rad}(Q))} \\
 & \times  \Bigg| \sum_{\substack{F'\in \mathcal{H} \\ \deg(F') \leq [(\deg(h)+jn)/2]-\deg(ab^2) \\ (F',hQv)=1}} G_{\ell}(1,F') \Big(  \frac{F'}{ab^2}\Big)_{\ell}^2 \overline{\Big( \frac{F'}{h \text{rad}(Q)v}\Big)_{\ell} } z^{\deg(F')} b_1(\deg(ab^2F'),z) \Bigg|^2.
 \end{align*}
 In the sum over $F'$, note that we can restrict to $F'$ square-free, otherwise, $G_{\ell}(1,F') =0$. Let 
 $$a_{T_1}(F') = G_{\ell}(1,F') \Big(  \frac{F'}{ab^2}\Big)_{\ell}^2 \overline{\Big( \frac{F'}{h\text{rad}(Q)}\Big)_{\ell} } z^{\deg(F')} b_1(\deg(ab^2F'),z).$$
 We get that there exists some $k \leq [ (\deg(h)+jn)/2]-\deg(ab^2)$ such that
 \begin{align*}
 \sum_{v \in \mathcal{M}_n - \deg(\text{rad}(Q))}& \Bigg|  \sum_{\substack{F' \in \mathcal{H} \\ \deg(F') \leq [(\deg(h)+jn)/2]-\deg(ab^2) \\ (F',hQv)=1}} a_{T_1}(F') \overline{\Big( \frac{F'}{v}\Big)_{\ell}} \Bigg|^2\\ &\ll (\deg(h)+jn)^2 \sum_{v \in \mathcal{M}_n - \deg(\text{rad}(Q))} \Bigg| \sum_{\substack{F' \in \mathcal{H}_k \\ (F',hQv)=1}} a_{T_1}(F') \overline{\Big( \frac{F'}{v}\Big)_{\ell}} \Bigg|^2.
 \end{align*}
 Using the Large Sieve Inequality as in Theorem \ref{largesieve2} and the facts that $|a_{T_1}(F')| \ll |F'|^{1/2-\sigma}$ and $k \leq \deg(h)/2+jn/2-\deg(ab^2)$, we get that
 \begin{align}
& \sum_{v \in \mathcal{M}_{n - \deg(\text{rad}(Q))}}  \Bigg| \sum_{\substack{F' \in \mathcal{H} \\  \deg(F') \leq [(\deg(h)+jn)/2]-\deg(ab^2) \nonumber  \\ (F',hQv)=1}} a_{T_1}(F') \overline{\Big( \frac{F'}{v}\Big)_{\ell}} \Bigg|^2\\
 &  \ll |h|^{\varepsilon} q^{\varepsilon n} \Bigg(q^{n-\deg(\text{rad}(Q))}+q^{n(1/3+j/2)} \frac{|h|^{1/2}}{|\text{rad}(Q)|^{1/3} |ab^2|}+q^{n(2/3+j/3)} \frac{|h|^{1/3}}{|\text{rad}(Q)|^{2/3}|ab^2|^{2/3}} \Bigg)   \nonumber \\
 & \times q^{jn(1-\sigma)} |h|^{1-\sigma} |ab^2|^{2\sigma-2} \nonumber \\
 & \ll |h|^{\varepsilon} q^{\varepsilon n} \Bigg(q^{n(1+j-j\sigma)-\deg(\text{rad}(Q))} |h|^{1-\sigma} |ab^2|^{2\sigma-2} + q^{n (1/3+3j/2-j \sigma)} \frac{|h|^{3/2-\sigma}}{|\text{rad}(Q)|^{1/3} |ab^2|^{3-2\sigma}} \nonumber\\
 & +q^{n(2/3+4j/3-j \sigma)} \frac{|h|^{4/3-\sigma}}{|\text{rad}(Q)|^{2/3}|ab^2|^{8/3-2\sigma}}\Bigg). \label{sumv}
\end{align} 
 Now we introduce the sums over $a,b$ and keep in mind that $Q=ab/(ab,h)$. We have that
 $$\text{rad}(Q) = \frac{\text{rad}(b) a }{(a,b) \text{rad}(ab,h)}.$$
  To bound the contribution of first term in \eqref{sumv}, we write
  \begin{align*} \sum_{\deg(ab^2) \leq [(jn+\deg(h))/2]}& \frac{1}{|\text{rad}(Q)| |ab^2|^{2-2\sigma}} =  \sum_{\deg(a) \leq  [(jn+\deg(h))/2]} \frac{1}{|a|^{3-2\sigma}}\\
 & \times \sum_{\deg(b) \leq (jn+\deg(h))/4-\deg(a)/2} \frac{|(a,b)| |\text{rad}(ab,h)|}{|\text{rad}(b)| |b|^{4-4\sigma}}.
 \end{align*}
 The generating series of the sum over $b$ is
 \begin{align*}
& \sum_{b \in \mathcal{M}}  \frac{|(a,b)| |\text{rad}(ab,h)|}{|\text{rad}(b)| |b|^{4-4\sigma}} u^{\deg(b)} = \prod_{P \nmid ah} \Big(  1+ \frac{u^{\deg(P)}}{|P|^{5-4\sigma} ( 1- \frac{u^{\deg(P)}}{|P|^{4-4\sigma}})}\Big) \\
 & \times \prod_{\substack{P|h \\ P \nmid a}} \Big( 1-\frac{u^{\deg(P)}}{|P|^{4-4\sigma}}\Big)^{-1} \prod_{P|(a,h)} \Big(|P|\sum_{j=0}^{\infty} \frac{ u^{j\deg(P)} }{|P|^{j(4-4\sigma)}} \Big)  \prod_{\substack{P | a \\ P \nmid h}}  \Big( 1-\frac{u^{\deg(P)}}{|P|^{4-4\sigma}}\Big)^{-1},
 \end{align*}
which converges for $|u|<q^{4-4\sigma}$. By using Perron's formula for the sum over $b$ and choosing the contour of integration $|u|=q^{4-4\sigma-\varepsilon}$, we get that
 $$  \sum_{\deg(b) \leq (jn+\deg(h))/4-\deg(a)/2} \frac{|(a,b)| |\text{rad}(ab,h)|}{|\text{rad}(b)| |b|^{4-4\sigma}} \ll q^{(4\sigma-4)( \frac{jn+\deg(h)}{4}-\frac{\deg(a)}{2})} |(a,h)| |ah|^{\varepsilon}.$$
 Introducing the sum over $a$, we have that 
 \begin{align*}
  \sum_{\deg(a) \leq  [(jn+\deg(h))/2]} \frac{|(a,h)|}{|a|}
  \ll \sum_{d|h} |d| \sum_{\substack{\deg(a)  [(jn+\deg(h))/2] \\ (a,h)=d}} \frac{1}{|a|}
  \ll |h|^{\varepsilon} q^{jn \varepsilon}.
 \end{align*}
 Combining the equations above, it follows that the contribution of the first term of  \eqref{sumv} is bounded by
 \begin{equation} \sum_{\substack{a \in \mathcal{H} \\ \deg(ab^2) \leq [(jn+\deg(h))/2]}}q^{n(1+j-j\sigma)-\deg(\text{rad}(Q))} |h|^{1-\sigma} |ab^2|^{2\sigma-2} \ll q^{n+\varepsilon n} |h|^{\varepsilon}  .
   \label{first_term}
 \end{equation}
To bound the contribution of second and third terms in \eqref{sumv}, we need to estimate sums of the form
\begin{align}
\sum_{\deg(ab^2)\leq (jn+\deg(h))/2} & \frac{|(a,b)|^{\Delta}|\text{rad}(ab,h)|^{\Delta}}{| \text{rad}(b) a|^{\Delta}|ab^2|^\delta}= \sum_{\deg(a) \leq (jn+\deg(h))/2} \frac{1}{|a|^{\Delta+\delta}} \nonumber \\
& \times \sum_{\deg(b) \leq (jn+\deg(h))/4-\deg(a)/2} \frac{|(a,b)|^{\Delta}|\text{rad}(ab,h)|^{\Delta}}{|\text{rad}(b)|^{\Delta} |b|^{2\delta}},
\label{toestimate}
\end{align}
where $\delta, \Delta\geq 0$ and $\Delta<1$. The generating series of the sum over $b$ becomes
\begin{align*}
\sum_{b \in \mathcal{M}} & \frac{|(a,b)|^{\Delta}|\text{rad}(ab,h)|^{\Delta}}{|\text{rad}(b)|^{\Delta} |b|^{2\delta}} u^{\deg(b)} = \prod_{P \nmid ah} \Big(  1+ \frac{u^{\deg(P)}}{|P|^{\Delta+2\delta} (1- \frac{u^{\deg(P)}}{|P|^{2\delta}})}\Big) \prod_{\substack{P | h \\ P \nmid a}} \Big(1 - \frac{u^{\deg(P)}}{|P|^{2\delta}} \Big)^{-1} \\
& \times \prod_{P|(a,h)} |P|^{\Delta} \Big(1 - \frac{u^{\deg(P)}}{|P|^{2\delta}} \Big)^{-1} \prod_{\substack{P |a \\ P\nmid h}} \Big(1 - \frac{u^{\deg(P)}}{|P|^{2\delta}} \Big)^{-1} \\
&= |(a,h)|^{\Delta} \prod_{P \nmid ah} \Big(  1+ \frac{u^{\deg(P)}}{|P|^{\Delta+2\delta} (1- \frac{u^{\deg(P)}}{|P|^{2\delta}})}\Big) \prod_{P|ah} \Big(1 - \frac{u^{\deg(P)}}{|P|^{2\delta}} \Big)^{-1}  \\
& := |(a,h)|^{\Delta} \mathcal{Z}\Big(  \frac{u}{q^{\Delta+2\delta}}\Big) \mathcal{F}(u), 
\end{align*}
where $\mathcal{F}(u)$ converges absolutely for $|u|<q^{2\delta}$.
If we denote by $M = (jn+\deg(h))/4-\deg(a)/2$, then Perron's formula gives
\begin{align*}
\sum_{\deg(b) \leq (jn+\deg(h))/4-\deg(a)/2} \frac{|(a,b)|^{\Delta}|\text{rad}(ab,h)|^{\Delta}}{|\text{rad}(b)|^{\Delta} |b|^{2\delta}} = \frac{1}{2 \pi i} \oint \frac{ |(a,h)|^{\Delta} \mathcal{F}(u)}{(1-u)  (1-q^{1-\Delta-2\delta}u)u^{M+1}} \, du,
\end{align*}
where we are integrating along a small circle around the origin. If $\Delta+2\delta-1<0$, then we get that
\begin{equation}
\label{sumb}
\sum_{\deg(b) \leq (jn+\deg(h))/4-\deg(a)/2} \frac{|(a,b)|^{\Delta}|\text{rad}(ab,h)|^{\Delta}}{|\text{rad}(b)|^{\Delta} |b|^{2\delta}}  \ll q^{(1-\Delta-2\delta)M} |(a,h)|^{\Delta} |ah|^{\varepsilon},
\end{equation} while if $\Delta+2\delta-1>0$, then
$$\sum_{\deg(b) \leq (jn+\deg(h))/4-\deg(a)/2} \frac{|(a,b)|^{\Delta}|\text{rad}(ab,h)|^{\Delta}}{|\text{rad}(b)|^{\Delta} |b|^{2\delta}}  \ll q^{\varepsilon n} |(a,h)|^{\Delta} |ah|^{\varepsilon}. $$
Introducing the sum over $a$, if $\Delta+2\delta-1>0$, we have
\begin{align*}
\sum_{\deg(a) \leq (jn+\deg(h))/2} \frac{|(a,h)^{\Delta}|}{|a|^{\Delta+\delta}} \ll 
\begin{cases}
|h|^{\varepsilon} q^{\varepsilon n} & \mbox{ if } \Delta+\delta \geq 1, \\
|h|^{\varepsilon} q^{(jn+\deg(h))(1-\Delta-\delta)/2} & \mbox{ if } \Delta+\delta <1,
\end{cases}
\end{align*}
and hence in this case,
$$ \eqref{toestimate} \ll |h|^\varepsilon q^{\varepsilon n}   \max \Big\{1, q^{(1-\Delta-\delta)jn/2} |h|^{(1-\Delta-\delta)/2} \Big\}.$$
If $\Delta+2\delta-1<0$, then introducing the sum over $a$, we have
\begin{align*}
\sum_{\deg(a) \leq (jn+\deg(h))/2} \frac{|(a,h)^{\Delta}|}{|a|^{1/2+\Delta/2}} \ll q^{(jn+\deg(h))(1-\Delta)/4} |h|^{\varepsilon}.
\end{align*}
Using this and with \eqref{sumb}, we get that in the case $\Delta+2\delta-1<0$, we have
$$ \eqref{toestimate} \ll |h|^{\varepsilon}  q^{(jn+\deg(h))(1-\Delta-\delta)/2} .$$
Combining this with the case $\Delta+2\delta-1>0$, it follows that
\begin{align}
\eqref{toestimate} \ll |h|^\varepsilon q^{\varepsilon n}   \max \Big\{1, q^{(1-\Delta-\delta)jn/2} |h|^{(1-\Delta-\delta)/2} \Big\}.\label{Deltadelta}
\end{align}
Using this to bound the second and third terms in \eqref{sumv} (with $\Delta=\frac{1}{3}$ and $\delta={3-2\sigma}$ for the second term and $\Delta=\frac{2}{3}$ and $\delta=\frac{8}{3}-2\sigma$ for the third term), together with \eqref{first_term}, we get that
 \begin{align*}
 \eqref{tobound} \ll |h|^{\varepsilon} q^{\varepsilon n} \Big( q^n +\Big(q^{n(1/3+3j/2-j\sigma)}|h|^{3/2-\sigma}+q^{n(2/3+4j/3-j\sigma)}|h|^{4/3-\sigma}\Big)  \max \Big\{1, q^{(jn+\deg(h))(\sigma-7/6)} \Big\} \Big).
 \end{align*}

 To bound the contributions from $T_2$ and $T_3$, the argument we used to bound the contribution from $T_1$ goes through similarly. For $T_2$, we replace $z \mapsto 1/(q^2z)$, and we multiply the final bound by $|qz|^{\deg(hV^j)}$. The only slight variation is in bounding the sum over $W$ in \eqref{sumw}. Namely, using Lemma \ref{gauss} (in particular, the fact that $|G_{\ell}(h,W)|$ is multiplicative as a function of $W$), we have
 \begin{align*}
 \sum_{W|(hV^j)^2} \frac{|G_{\ell}(hV^j,W)|^2}{|W|^{2(2-\sigma)}}  = \prod_{P|hV} \Big( 1+  O \Big(\sum_{i \geq 2} \frac{ |P|^{2i}}{|P|^{2i(2-\sigma)}} \Big)\Big) \ll |hV|^{\varepsilon}.
 \end{align*}
 Proceeding as in the case of $T_1$, we get that
 \begin{align*}
\sum_{V \in \mathcal{M}_n} |T_2(hV^j,z)|^2 \ll& |h|^{\varepsilon} q^{\varepsilon n} \Big( q^n +q^{n(1/3+3j/2-j\sigma)}|h|^{3/2-\sigma}+q^{n(2/3+4j/3-j\sigma)}|h|^{4/3-\sigma}\Big),
 \end{align*}
 where again we used \eqref{first_term} and \eqref{Deltadelta}.
 For $T_3$, we again proceed similarly as before, but now we replace $a_{T_1}(F')$ by $a_{T_3}(F')$, where
 $$a_{T_3}(F') = G_{\ell}(1,F') \Big(  \frac{F'}{ab^2}\Big)_{\ell}^2 \overline{\Big( \frac{F'}{h \text{rad}(Q)}\Big)_{\ell} } (q^2z)^{-\deg(F')} b_3(\deg(ab^2F'),z).$$
 Note that \[|a_{T_3}(F')| \ll |F'|^{\sigma -3/2}\] and $k  \leq \deg(h)/2+jn/2-\deg(ab^2)$. We then get that
 \begin{align*}
\sum_{V \in \mathcal{M}_n} |T_3(hV,z)|^2 &\ll |h|^{\varepsilon} q^{\varepsilon n} \Big( q^n +q^{n(1/3+3j/2-j\sigma)}|h|^{3/2-\sigma}+q^{n(2/3+4j/3-j\sigma)}|h|^{4/3-\sigma}\Big).
 \end{align*}
 Combining the bounds for the contributions of $T_1, T_2,$ and $T_3$ finishes the proof.
 \end{proof}
The following Corollary follows.
\begin{corollary}
\label{cor_lov}
For $i \in \{0,\ldots,\ell-1\}$, $j \geq 2$ and $z$ as in Proposition \ref{lov}, we have
\begin{align*}
 \sum_{V \in \mathcal{M}_n} \Big|  \psi^{(i)}(hV^j,z)\Big| \ll |h|^{\varepsilon} q^{\varepsilon n} \Big(q^n + \max \{ q^{n(\frac{2}{3} + \frac{3j}{4} - \frac{j \sigma}{2})} |h|^{\frac{1}{2} (\frac{3}{2}-\sigma)}, q^{n (\frac{2}{3}+\frac{j}{6})} |h|^{\frac{1}{6}}\} \Big).
 \end{align*}
\end{corollary}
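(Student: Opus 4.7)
The plan is to deduce the $L^1$ bound from the $L^2$ bound of Proposition \ref{lov} via the Cauchy--Schwarz inequality, and then compare the resulting terms against the two candidates inside the $\max$.

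First I would apply Cauchy--Schwarz to get
\[
\sum_{V \in \mathcal{M}_n} \bigl| \psi^{(i)}(hV^j, z)\bigr|
\;\leq\; q^{n/2} \Bigl( \sum_{V \in \mathcal{M}_n} \bigl| \psi^{(i)}(hV^j, z)\bigr|^2 \Bigr)^{1/2}.
\]
Inserting Proposition \ref{lov}, the square root of the $q^n$ term multiplied by $q^{n/2}$ produces the $q^n$ in the statement. The nontrivial contribution then splits, up to the $|h|^{\varepsilon} q^{\varepsilon n}$ factors, into two square-roots
\[
q^{n/2}(AM)^{1/2} \quad \text{and} \quad q^{n/2}(BM)^{1/2},
\]
where $A = q^{n(\frac{1}{3}+\frac{3j}{2}-j\sigma)}|h|^{3/2-\sigma}$, $B = q^{n(\frac{2}{3}+\frac{4j}{3}-j\sigma)}|h|^{4/3-\sigma}$ and $M = \max\{1, q^{jn(\sigma-7/6)}|h|^{\sigma-7/6}\}$.

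The key computation is that the factor $M$ is designed so that, when $\sigma \geq 7/6$, the exponents in $\sigma$ cancel. One checks directly that for $\sigma \leq 7/6$ we have $M=1$, so $q^{n/2}A^{1/2} = q^{n(2/3+3j/4-j\sigma/2)}|h|^{(3/2-\sigma)/2}$ is exactly the first argument of the $\max$, while for $\sigma \geq 7/6$ the $\sigma$-dependence in $AM$ cancels and $q^{n/2}(AM)^{1/2} = q^{n(2/3 + j/6)}|h|^{1/6}$, the second argument of the $\max$. In either case the contribution from $A$ is bounded by the stated maximum.

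It remains to show that the $B$ contribution is absorbed by these two terms, and this is where the hypothesis $j\geq 2$ is used. A direct comparison shows
\[
\frac{q^{n/2}B^{1/2}}{q^{n/2}A^{1/2}} \;=\; q^{n(1/6 - j/12)}|h|^{-1/12},
\]
which is $\leq 1$ exactly because $j \geq 2$; an analogous computation, applied after multiplying numerator and denominator by $M^{1/2}$, handles the regime $\sigma \geq 7/6$. Combining both estimates and taking the $\max$ yields the claimed bound. The only step that is more than formal is checking these two ratios are $\leq 1$ under $j\geq 2$, which isolates precisely why the corollary is stated for $j\geq 2$ rather than all $j\geq 1$.
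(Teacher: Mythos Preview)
Your proposal is correct and follows exactly the approach indicated in the paper, which simply states that the result follows from Cauchy--Schwarz and Proposition \ref{lov}. Your detailed verification that the $B$-term is dominated by the $A$-term precisely when $j\geq 2$ (since the ratio $(B/A)^{1/2}=q^{n(1/6-j/12)}|h|^{-1/12}$ is independent of $\sigma$ and of $M$) correctly pinpoints why the hypothesis $j\geq 2$ is needed.
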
  
 \begin{proof}
 The proof easily follows from Cauchy--Schwarz and Proposition \ref{lov}.
 \end{proof}
 We will also prove the following Proposition.
 
 \begin{proposition}
Let $\ell \geq 5$, $i \in \{0,\ldots, \ell-1\}$, and $z$ as in Proposition  \ref{lov}. We have
\begin{align*}
\sum_{V \in \mathcal{M}_n} & |\psi^{(i)}_V(V,b,z)|   \ll |b|^{1/2-\sigma+\varepsilon} q^{\varepsilon n} \Big(q^n +q^{ n \max\{ \frac{\ell(9-6\sigma)-6\sigma-1}{12} , \frac{7-2\sigma}{4} \}} |b|^{(\ell-2)\frac{1}{2} (\frac{3}{2}-\sigma)}\\
& +q^{n \max\{ \frac{\ell}{6}-2\sigma+\frac{5}{3}, \frac{7}{6} \}} |b|^{\frac{\ell-2}{6}} \sum_{n_1+\ldots+\ell n_{\ell}=n}  \prod_{i \geq \max\{2, \frac{ \frac{\ell+1}{6} -\sigma}{\sigma-\frac{2}{3}} \}} q^{n_i (1-\frac{i}{2} ( \frac{\ell}{6} +\frac{15}{6}-3\sigma))}  \Big).
\end{align*}

 \end{proposition}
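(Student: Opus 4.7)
The plan is to apply Theorem \ref{thm:gettingridofa} to express $\psi_V^{(i)}(V,b,z)$ in terms of the unadorned series $\psi^{(i')}$, bound all Gauss-sum weights trivially using Corollary \ref{size-GS}, and then sum over $V$ by combining Corollary \ref{cor_lov} (applied to a well-chosen ``dominant'' factor of $V$) with the convexity bound of Theorem \ref{residue} (applied to the remaining factors). A preliminary step is to reduce to the case $(V,b)=1$ and $b$ square-free: since $G_\ell(V,F)$ vanishes on non square-free $F$ coprime to $V$, a standard argument lets us peel off the common factor of $(V,b)$ at the cost of an acceptable $q^{\varepsilon n}|b|^{\varepsilon}$.

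Under this reduction, write $V=V_1V_2^2\cdots V_{\ell-1}^{\ell-1}V_\ell^\ell$ with the $V_j$ square-free and pairwise coprime, so $n=\sum_{j=1}^\ell j\,n_j$ with $n_j:=\deg V_j$. Theorem \ref{thm:gettingridofa} then expresses $\psi_V^{(i)}(V,b,z)$ as a finite sum (indexed by $E\mid V_\ell^*$ and $d_j\mid V_j$, plus $d_{\ell-2}\mid bEV_{\ell-2}$) of products of Gauss sums $G_\ell(V,b),\,G_\ell(\cdot,E),\,G_{\ell,j+1}(\cdot,d_j)$ times one term $\psi^{(i')}(A,z)$ with
\[
A=\frac{b^{\ell-2}E^{\ell-2}V_1V_2^2\cdots V_{\ell-1}^{\ell-1}}{\prod_{j=1}^{\ell-2}d_j^{2j+2-\ell}}.
\]
All Gauss sums are supported on square-free arguments, so Corollary \ref{size-GS} bounds them by the square root of the modulus; combined with $|z^{\deg b}|=|b|^{-\sigma}$, this contributes the overall prefactor $|b|^{1/2-\sigma+\varepsilon}q^{\varepsilon n}$, while the divisor-like sums over $E$ and the $d_j$ are absorbed into the $q^{\varepsilon n}$.

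The heart of the argument is bounding $\sum_{V\in\mathcal{M}_n}|\psi^{(i')}(A,z)|$ summed over the partition $n=\sum jn_j$. For each partition I would pick a distinguished index $j^\star$ and apply Corollary \ref{cor_lov} to the sum over $V_{j^\star}\in\mathcal{M}_{n_{j^\star}}$ with exponent $j=j^\star$ and $h=b^{\ell-2}\prod_{j'\neq j^\star}V_{j'}^{j'}$ (absorbing the $d_j$'s into $h$ since they divide the $V_j$'s). For the remaining factors $V_{j'}$, $j'\neq j^\star$, I would employ the convexity estimate of Theorem \ref{residue}, which after summing $V_{j'}$ of degree $n_{j'}$ yields a factor $q^{n_{j'}(1+j'(3/2-\sigma)/2)}|b|^{\varepsilon}$. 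The three terms of the claimed bound then correspond precisely to the three alternatives inside the parenthesis of Corollary \ref{cor_lov}:
\begin{itemize}
\item The trivial piece $q^{n_{j^\star}}$, which, combined with the convexity bound on the other factors, ultimately collapses (after optimization over $j^\star$) into the clean $q^n$ term.
\item The first branch $q^{n_{j^\star}(2/3+3j^\star/4-j^\star\sigma/2)}|h|^{(3/2-\sigma)/2}$, which unfolds the $|h|$-factor across $b^{\ell-2}$ and the other $V_{j'}^{j'}$; optimizing $j^\star$ (taking $j^\star=\ell-1$ or the worst of the boundary $j^\star$'s) produces the max in the second term together with the exponent $|b|^{(\ell-2)(3/2-\sigma)/2}$.
\item The second branch $q^{n_{j^\star}(2/3+j^\star/6)}|h|^{1/6}$, which yields the third term with $|b|^{(\ell-2)/6}$; the product $\prod_{i\ge\max\{2,\dots\}}q^{n_i(\cdots)}$ arises because for indices $i$ below the threshold the convexity bound beats the Lindelöf-on-average bound, while above the threshold the latter is preferable, and the retained partition sum $\sum_{n_1+\cdots+\ell n_\ell=n}$ records this split.
\end{itemize}

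The main obstacle I foresee is the combinatorial optimization: for each partition $(n_1,\ldots,n_\ell)$ one must choose $j^\star$ so as to align with the worst exponent in $\sigma$ and in $\ell$, and verify that the convexity contributions from the remaining $V_{j'}$ recombine with the Lindelöf-on-average contribution from $V_{j^\star}$ to give the uniform max appearing in the statement. The crossover between the two branches of Corollary \ref{cor_lov}'s max---controlled by the inequality $i(\ell/6+15/6-3\sigma)/2\gtrsim 1$---determines the threshold $i\ge((\ell+1)/6-\sigma)/(\sigma-2/3)$ in the product, and verifying these thresholds carefully across all partitions is where the bookkeeping becomes delicate.
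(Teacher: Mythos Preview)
Your overall strategy---apply Theorem \ref{thm:gettingridofa}, bound the Gauss-sum weights via Corollary \ref{size-GS}, and then invoke Corollary \ref{cor_lov}---matches the paper's. But there is a genuine gap in \emph{where} you apply the Lindel\"of-on-average bound, and it is not a bookkeeping issue.

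You propose to choose, for each partition $(n_1,\dots,n_\ell)$, an index $j^\star$ and apply Corollary \ref{cor_lov} to the sum over $V_{j^\star}$ with exponent $j^\star$. This fails in the dominant case $n_1=n$ (i.e., $V$ square-free): there you are forced to take $j^\star=1$, but Corollary \ref{cor_lov} requires $j\ge 2$. More fundamentally, you cannot ``absorb the $d_j$'s into $h$'', because $d_{j^\star}\mid V_{j^\star}$, so your putative $h$ would depend on the summation variable. The paper's device is different and specific: in the argument of $\psi$ produced by Theorem \ref{thm:gettingridofa}, $V_1$ appears to the first power but $d_1$ enters with exponent $-(2\cdot 1+2-\ell)=\ell-4$; writing $V_1=d_1 a$ turns the combined factor into $a\,d_1^{\ell-3}$. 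One then applies Corollary \ref{cor_lov} to the \emph{$d_1$-sum} with exponent $\ell-3\ge 2$ (this is where $\ell\ge 5$ is used) and $h=\alpha a$, where $\alpha$ packages $b^{\ell-2}$, $E^{\ell-2}$, the $V_i^i$ for $i\ge 2$, and the remaining $d_i$'s. This works uniformly over all partitions; there is no optimization of $j^\star$.

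After that single application of Corollary \ref{cor_lov}, there is no further appeal to convexity. The remaining variables $a$, $E$, and each pair $(V_i,d_i)$ for $2\le i\le \ell-2$ are summed explicitly against the weight $|h|^{\theta}$ coming from Corollary \ref{cor_lov}, where $\theta\in\{\tfrac12(\tfrac32-\sigma),\tfrac16\}$. The sums over $(V_i,d_i)$ have the shape
\[
\sum_{\deg V_i=n_i}\ \sum_{d_i\mid V_i}|d_i|^{\,i+\frac12-(i+1)\sigma}\,\bigl|V_i^{\,i}/d_i^{\,2i+2-\ell}\bigr|^{\theta},
\]
and it is the sign of the exponent $\theta(\ell-2-2i)+i+\tfrac12-(i+1)\sigma$ that decides whether the $d_i$-sum is dominated by $d_i=1$ or $d_i=V_i$. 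For $\theta=\tfrac16$ this sign change occurs exactly at $i=\bigl(\tfrac{\ell+1}{6}-\sigma\bigr)/(\sigma-\tfrac23)$, which is the threshold in the product of the third term---not a ``convexity vs.\ Lindel\"of'' alternative. Your proposal would need to incorporate the $d_1$-trick and replace the per-partition optimization by this uniform treatment to recover the stated bound.
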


 \begin{proof}
 Using Theorem \ref{thm:gettingridofa}, we have 
 \begin{align*}
  \psi_V^{(i)}(V, b,z) =&
 G_\ell(r,b) z^{\deg(b)}\sum_{\substack{E\in \mathcal{M}\\ E \mid V_\ell^* }}\mu(E)G_\ell(b^{\ell-2}V_1\cdots V_{\ell-1}^{\ell-1},E)z^{\deg(E)}\\&\times
  \prod_{\pi \mid bEV_1\cdots V_{\ell-1}}
\left(1 - q^{(\ell-1)\deg(\pi)} \legl{-1}{\pi}^{\nu_\pi(aEr_1\cdots V_{\ell-2}V_{\ell-1})+1}z^{\ell \deg(\pi)}\right)^{-1}\nonumber \\
&\times  \sum_{\substack{d_{\ell-2} \mid bEV_{\ell-2}\\ d_1\mid V_1, \dots d_{\ell-3}\mid V_{\ell-3}}} \mu(d_1d_2\cdots d_{\ell-2} ) q^{\deg(d_1d_2^2\cdots d_{\ell-2}^{\ell-2})}  z^{\deg(d_1^2d_2^3\cdots d_{\ell-2}^{\ell-1}  )} \prod_{1\leq k< j \leq \ell-2} \overline{\legl{d_k}{d_j}^{k(j+1)}}\\
& \times \prod_{j=1}^{\ell-2} G_{\ell,j+1} \left( \frac{b^{\ell-2}E^{\ell-2}V_1 V_2^2 \cdots V_{\ell-1}^{\ell-1}}{d_{1}d_2\cdots d_{\ell-2} ^{\ell-2}} , d_{j} \right)\\
&  \times \psi^{(i-\deg(b)-\deg(E)- \deg(d_1^2d_2^3\cdots d_{\ell-2}^{\ell-1} ))} \left( \frac{b^{\ell-2}E^{\ell-2}V_1 V_2^2 \cdots V_{\ell-1}^{\ell-1}}{\prod_{j=1}^{\ell-2}d_j^{2j+2-\ell}}, z \right),
\end{align*}
where we recall that $V=V_1 \ldots V_{\ell-1}^{\ell-1} V_{\ell}^{\ell}$ with $V_1, \ldots, V_{\ell-1}$ square-free and pairwise coprime, and where $V_{\ell}^{*}$ denotes the product of the primes dividing $V_{\ell}$, but not $V_1 \ldots V_{\ell-1}$. It then follows that for any $\varepsilon>0$
\begin{align*}
\sum_{V \in \mathcal{M}_n} & |\psi^{(i)}_V(V,b,z)| \ll |b|^{1/2-\sigma+\varepsilon} \sum_{n_1+2n_2+\cdots+\ell n_\ell=n} \sum_{\substack{i=1 \\ \deg(V_i)=n_i} }^{\ell} \sum_{E|V_{\ell}^*} |E|^{1/2-\sigma+\varepsilon}  \nonumber  \\
& \times \sum_{\substack{d_{\ell-2} \mid bEV_{\ell-2}\\ d_i|V_i, i \leq \ell-3}} |d_i|^{i+1/2-(i+1)\sigma} |d_{\ell-2}|^{\ell-3/2-(\ell-1)\sigma} \sum_{j=0}^{\ell-1} \Big| \psi^{(j)}\left( \frac{b^{\ell-2}E^{\ell-2}V_1 V_2^2 \cdots V_{\ell-1}^{\ell-1}}{\prod_{j=1}^{\ell-2}d_j^{2j+2-\ell}}, z \right) \Big|. 
\end{align*}
For simplicity, denote
$$ \alpha = \frac{b^{\ell-2}E^{\ell-2} V_2^2 \cdots V_{\ell-1}^{\ell-1}}{\prod_{j=2}^{\ell-2}d_j^{2j+2-\ell}}.$$
Focusing on the sum over $V_1$ in the expression above and letting $V_1=d_1a$, we have
\begin{align}
\label{sumalov}
\sum_{\deg(V_1)=n_1} \sum_{d_1|V_1}|d_1|^{\frac{3}{2}-2\sigma} |\psi^{(j)}(\alpha V_1 d_1^{\ell-4},z) | = \sum_{\deg(a) \leq n_1} q^{(n_1-\deg(a))(\frac{3}{2}-2\sigma)} \sum_{\deg(d_1)=n_1-\deg(a)} |\psi^{(j)}(\alpha a d_1^{\ell-3} ,z)|.
\end{align}
Now we use Corollary \ref{cor_lov} to get that 
\begin{align*}
\sum_{\deg(d_1)=n_1-\deg(a)}&  |\psi^{(j)}(\alpha a d_1^{\ell-3} ,z)| \ll |\alpha a|^{\varepsilon} q^{\varepsilon n_1} \Big( q^{n_1-\deg(a)}  \\
&+ \max \{ q^{(n_1-\deg(a))(\frac{3\ell}{4} - \frac{19}{12} - \frac{(\ell-3)\sigma}{2})} |\alpha a|^{\frac{1}{2}( \frac{3}{2} -\sigma)} , q^{(n_1-\deg(a))(\frac{\ell}{6}+\frac{1}{6})} |\alpha a|^{\frac{1}{6}}\} \Big) .
\end{align*}
Introducing the sum over $a$, we have that
\begin{align*}
\eqref{sumalov} &\ll q^{n_1(\frac{3}{2}-2\sigma)+\varepsilon n} |\alpha|^{\varepsilon} \sum_{\deg(a) \leq n_1} \frac{1}{|a|^{\frac{3}{2}-2\sigma}} \Big( q^{n_1-\deg(a)}  \\
&+ \max \{ q^{(n_1-\deg(a))(\frac{3\ell}{4} - \frac{19}{12} - \frac{(\ell-3)\sigma}{2})} |\alpha a|^{\frac{1}{2}( \frac{3}{2} -\sigma)} , q^{(n_1-\deg(a))(\frac{\ell}{6}+\frac{1}{6})} |\alpha a|^{\frac{1}{6}}\} \Big) \\
& \ll |\alpha|^{\varepsilon} \Big( q^{n_1} + \max\{ q^{n_1 \max\{ \frac{\ell(9-6\sigma)-6\sigma-1}{12} , \frac{7-2\sigma}{4} \}} |\alpha|^{\frac{1}{2}(\frac{3}{2}-\sigma)}, q^{n_1 \max\{ \frac{\ell}{6}-2\sigma+\frac{5}{3}, \frac{7}{6} \}} |\alpha|^{\frac{1}{6}}\} \Big).
\end{align*}
Now we introduce the sum over $V_i$ for $2 \leq i  \leq \ell-3$. For $\theta \in \{ \frac{1}{2}\left(\frac{3}{2}-\sigma\right), \frac{1}{6}\}$  we have that
$$
\sum_{\deg(V_i) =n_i}   \sum_{d_i|V_i} |d_i|^{i+\frac{1}{2}- (i+1)\sigma} \Big| \frac{V_i^i}{d_i^{2i+2-\ell}} \Big|^{\theta}   \ll
\begin{cases}
q^{n_i(1+i\theta)} & \mbox{ if } \theta(\ell-2-2i)+i+\frac{1}{2} -(i+1)\sigma \leq 0 \\
q^{n_i(\theta(\ell-2-i)+i+\frac{3}{2}-(i+1)\sigma)} & \mbox{ otherwise.}
\end{cases}
$$
We similarly have that
\begin{align*}
\sum_{\deg(V_{\ell-2})=n_{\ell-2}} \sum_{d_{\ell-2}|EbV_{\ell-2}} |d_{\ell-2}|^{\ell-\frac{3}{2}-(\ell-1)\sigma} \Big|\frac{ V_{\ell-2}^{\ell-2}}{d_{\ell-2}^{\ell-2}} \Big|^{\theta} \ll q^{n_{\ell-2}(1+\theta (\ell-2))},
\end{align*}
\begin{align*}
\sum_{\deg(V_{\ell-1})=n_{\ell-1}} |V_{\ell-1}|^{(\ell-1)\theta}\ll q^{n_{\ell-1}(1+\theta (\ell-1))},
\end{align*}
\begin{align*}
\sum_{\deg(V_{\ell}) = n_{\ell}} \sum_{E | V_{\ell}^{*}} |E|^{\frac{1}{2}-\sigma+\varepsilon+(\ell-2)\theta} \ll q^{n_{\ell}(\frac{3}{2}-\sigma+(\ell-2)\theta)}.
\end{align*}
Putting all of these together, we get that
\begin{align}
\eqref{sumv} & \ll |b|^{1/2-\sigma+\varepsilon} \sum_{n_1+2n_2+\cdots+\ell n_{\ell}=n} \Big( q^n +\max\{ q^{n_1 \max\{ \frac{\ell(9-6\sigma)-6\sigma-1}{12} , \frac{7-2\sigma}{4} \}} |b^{\ell-2}|^{\frac{1}{2}\left(\frac{3}{2}-\sigma\right)}  \nonumber \\
& \times  \prod_{\substack{2 \leq i < { \frac{(3-2\sigma)(\ell-2)}{2}+1-2\sigma }}} q^{n_i (\frac{\ell(3-2\sigma)}{4}+\frac{i(1-2\sigma)}{4} )}  \prod_{\substack{i \geq
{ \frac{(3-2\sigma)(\ell-2)}{2}+1-2\sigma }}} q^{n_i (1+\frac{i}{2}\left(\frac{3}{2}-\sigma\right))}  \nonumber \\
&\times q^{n_1 \max\{ \frac{\ell}{6}-2\sigma+\frac{5}{3}, \frac{7}{6} \}} |b^{\ell-2}|^{\frac{1}{6}} \prod_{2 \leq i< \frac{ \frac{\ell+1}{6} -\sigma}{\sigma-\frac{2}{3}}} q^{n_i ( \frac{\ell+7+5i}{6}-(i+1)\sigma)} \prod_{i \geq  \frac{ \frac{\ell+1}{6} -\sigma}{\sigma-\frac{2}{3}}} q^{n_i (1+\frac{i}{6})} \} \Big).\label{mixedbound}
\end{align}
Now note that 
\begin{align*}
 \prod_{\substack{2 \leq i < { \frac{(3-2\sigma)(\ell-2)}{2}+1-2\sigma }}} q^{n_i (\frac{\ell(3-2\sigma)}{4}+\frac{i(1-2\sigma)}{4} )} \leq q^{\left(n-n_1-\sum_{i \geq  \frac{(3-2\sigma)(\ell-2)}{2}+1-2\sigma } in_i\right) \left(  \frac{ \ell(3-2\sigma)}{8} - \frac{2\sigma-1}{4}\right)},
\end{align*}
so \begin{align*}
&  q^{n_1 \max\{ \frac{\ell(9-6\sigma)-6\sigma-1}{12} , \frac{7-2\sigma}{4} \}}   \prod_{\substack{2 \leq i < { \frac{(3-2\sigma)(\ell-2)}{2}+1-2\sigma }}} q^{n_i (\frac{\ell(3-2\sigma)}{4}+\frac{i(1-2\sigma)}{4} )}  \prod_{\substack{i \geq
{ \frac{(3-2\sigma)(\ell-2)}{2}+1-2\sigma }}} q^{n_i (1+\frac{i}{2}\left(\frac{3}{2}-\sigma\right))} \\
& \leq q^{n  (  \frac{ \ell(3-2\sigma)}{8} - \frac{2\sigma-1}{4})} q^{n_1 \Big(  \max\{ \frac{\ell(9-6\sigma)-6\sigma-1}{12} , \frac{7-2\sigma}{4} \} - ( \frac{\ell(3-2\sigma)}{8}-\frac{2\sigma-1}{4})\Big)} \prod_{\substack{i \geq
{ \frac{(3-2\sigma)(\ell-2)}{2}+1-2\sigma }}} q^{n_i \Big(\frac{i\ell(2\sigma-3)}{8}+\frac{i}{2}+1\Big)} \\
& \ll 
q^{ n \max\{ \frac{\ell(9-6\sigma)-6\sigma-1}{12} , \frac{7-2\sigma}{4} \}}.
\end{align*}

We similarly get that
\begin{align*}
& q^{n_1 \max\{ \frac{\ell}{6}-2\sigma+\frac{5}{3}, \frac{7}{6} \}}  \prod_{2 \leq i< \frac{ \frac{\ell+1}{6} -\sigma}{\sigma-\frac{2}{3}}} q^{n_i ( \frac{\ell+7+5i}{6}-(i+1)\sigma)} \prod_{i \geq  \frac{ \frac{\ell+1}{6} -\sigma}{\sigma-\frac{2}{3}}} q^{n_i (1+\frac{i}{6})} \\
& \ll q^{n \max\{ \frac{\ell}{6}-2\sigma+\frac{5}{3}, \frac{7}{6} \}} \prod_{i \geq  \frac{ \frac{\ell+1}{6} -\sigma}{\sigma-\frac{2}{3}}} q^{n_i (1-\frac{i}{2} ( \frac{\ell}{6} +\frac{15}{6}-3\sigma))}.
\end{align*}

Using the two bounds above and \eqref{mixedbound}, we get that
\begin{align*}
\eqref{sumv} & \ll |b|^{1/2-\sigma+\varepsilon} q^{\varepsilon n} \Big(q^n +q^{ n \max\{ \frac{\ell(9-6\sigma)-6\sigma-1}{12} , \frac{7-2\sigma}{4} \}} |b|^{(\ell-2)\frac{1}{2} (\frac{3}{2}-\sigma)}\\
& +q^{n \max\{ \frac{\ell}{6}-2\sigma+\frac{5}{3}, \frac{7}{6} \}} |b|^{\frac{\ell-2}{6}} \sum_{n_1+2n_2+\cdots+\ell n_{\ell}=n}  \prod_{i \geq  \frac{ \frac{\ell+1}{6} -\sigma}{\sigma-\frac{2}{3}}} q^{n_i (1-\frac{i}{2} ( \frac{\ell}{6} +\frac{15}{6}-3\sigma))}  \Big) \Big),
\end{align*}
which finishes the proof.
 \end{proof}

\begin{corollary}
\label{lov_sigma}
When $\sigma=1+1/\ell+\varepsilon$ and $\ell \geq 11$, we have that
\begin{align*}
&\sum_{V \in \mathcal{M}_n}  |\psi^{(i)}_V(V,b,z)| \ll   |b|^{\frac{\ell}{4}-\frac{3}{2}+\varepsilon}   q^{n ( \frac{\ell}{4} - \frac{13}{12} - \frac{1}{2\ell} )+\varepsilon n}.
\end{align*}
\end{corollary}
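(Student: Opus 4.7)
The plan is to specialize the preceding Proposition at $\sigma = 1 + 1/\ell + \varepsilon$ and verify that each of its three terms is bounded by the target $|b|^{\ell/4 - 3/2 + \varepsilon} q^{n(\ell/4 - 13/12 - 1/(2\ell))+\varepsilon n}$ when $\ell \geq 11$. The argument is essentially a book-keeping exercise.

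First I would identify which branches of the various $\max$'s are active. A direct substitution gives
\[
\tfrac{\ell(9-6\sigma) - 6\sigma - 1}{12} = \tfrac{\ell}{4} - \tfrac{13}{12} - \tfrac{1}{2\ell} + O(\varepsilon), \qquad \tfrac{7-2\sigma}{4} = \tfrac{5}{4} - \tfrac{1}{2\ell} + O(\varepsilon),
\]
so for $\ell \geq 11$ the first expression dominates. Similarly $\tfrac{\ell}{6} - 2\sigma + \tfrac{5}{3} = \tfrac{\ell-2}{6} - \tfrac{2}{\ell} + O(\varepsilon)$ beats $\tfrac{7}{6}$ for $\ell \geq 11$, and the threshold in the product becomes $\tfrac{(\ell-6)(\ell+1)}{2(\ell+3)} + O(\varepsilon) \geq 2$. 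The prefactor becomes $|b|^{-1/2 - 1/\ell + \varepsilon}$, and a short calculation shows $(\ell-2)\cdot\tfrac{1}{2}(\tfrac{3}{2}-\sigma) = \tfrac{\ell}{4} - 1 + \tfrac{1}{\ell}$, so that the middle term of the Proposition contributes exactly the claimed target bound.

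Next I would treat the product in the third term. The exponent is
\[
\alpha_i = 1 - \tfrac{i}{2}\Bigl(\tfrac{\ell}{6} + \tfrac{15}{6} - 3\sigma\Bigr) = 1 - \tfrac{i(\ell-6)(\ell+3)}{12\ell} + O(\varepsilon),
\]
which is negative for every $i$ at or above the threshold whenever $\ell \geq 11$ (a one-line check evaluating at $i = \tfrac{(\ell-6)(\ell+1)}{2(\ell+3)}$ gives $\alpha \approx 1 - \tfrac{(\ell-6)^2(\ell+1)}{24\ell} < 0$). Since the threshold exceeds $1$, the partition $n_1 = n$, $n_i = 0$ for $i \geq 2$ is admissible, and the supremum of the product over admissible partitions is therefore $\leq 1$; the sum over partitions contributes an extra factor of at most $p(n) \ll q^{\varepsilon n}$.

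Finally, I would combine everything and verify by elementary inequalities that for $\ell \geq 11$ and $|b| \geq 1$, $q^n \geq 1$, both the first and third terms are absorbed into the target. Concretely: the first-term check reduces to $\tfrac{\ell}{4} - \tfrac{13}{12} - \tfrac{1}{2\ell} \geq 1$, i.e.\ $\ell \geq 25/3 + 2/\ell$; the third-term checks reduce to $\ell \geq 8 - 12/\ell$ (for the $|b|$-exponent $\tfrac{\ell}{6} - \tfrac{5}{6} - \tfrac{1}{\ell} \leq \tfrac{\ell}{4} - \tfrac{3}{2}$) and $\ell \geq 9 - 18/\ell$ (for the $q^n$-exponent). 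All of these are comfortable for $\ell \geq 11$.

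There is no real obstacle in this calculation: it is purely algebraic, and the range $\ell \geq 11$ is dictated precisely by when the middle term in the Proposition dominates the other two after substituting $\sigma = 1 + 1/\ell + \varepsilon$. The only mild subtlety is to check that the threshold controlling the product exceeds $1$, so that concentrating the whole partition on $n_1$ avoids the product entirely; this is where the hypothesis $\ell \geq 11$ is used in bounding the third term.
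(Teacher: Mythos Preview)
Your proof is correct and follows exactly the approach the paper intends: the Corollary is stated immediately after the Proposition with no separate proof, so it is meant to follow by substituting $\sigma = 1 + 1/\ell + \varepsilon$ and checking that the middle term dominates, which is precisely what you do. Your identification of the active branches of the maxima, the computation showing the middle term equals the target, the verification that $\alpha_i < 0$ on the product range (so the product is $\le 1$ and the partition sum is $\ll q^{\varepsilon n}$), and the absorption of the first and third terms are all accurate for $\ell \ge 11$.

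One small comment on exposition: the remark that ``the threshold controlling the product exceeds $1$'' is not really the crux, since $\max\{2, i_0\} \ge 2$ automatically; what matters is that $\alpha_i < 0$ for every integer $i$ in the product range (in fact already $\alpha_2 = -2/33 < 0$ when $\ell = 11$), and this is where $\ell \ge 11$ genuinely enters for the third term, together with the comparisons $\ell \ge 8 - 12/\ell$ and $\ell \ge 9 - 18/\ell$ that you correctly record.
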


 \begin{remark}
 By comparison,  the convexity bound of Corollary \ref{cor_convexity} yields for  $\sigma=1+1/\ell+\varepsilon$  and  $\ell\geq 8$,
 \begin{align*}
\sum_{V \in \mathcal{M}_n} & |\psi^{(i)}_V(V,b,z)| \ll |b|^{\frac{\ell}{4}-\frac{3}{2}+\varepsilon}   q^{n(\frac{\ell}{4}-\frac{3}{4}-\frac{1}{2\ell})+\varepsilon n}.
\end{align*}
                       \end{remark}

\section{One-level density for characters of order $\ell$ - setup}
\label{section_setup}
Recall that we are considering the family $\mathcal{F}_{\ell}(d)$ of order $\ell$ Dirichlet characters, given in equation \eqref{family}.
We also recall the expression \eqref{eq:rootsprod} for the $L$--function associated to $\chi_c$, where $c$ is square-free of degree $d$.

We are interested in evaluating the one-level density of zeros, which is defined in \eqref{1ld_def} and \eqref{sigma}. We rewrite it here as 
\begin{align} \label{OLD}
\left\langle\Sigma(\phi,\chi_c)\right\rangle_{\mathcal{F}_\ell(d)} = \frac{1}{|\mathcal{F}_\ell(d)|}\sum_{\chi \in \mathcal{F}_\ell(d)}\Sigma(\phi,\chi_c).\end{align}

\subsection{The explicit formula}We will follow the computation of the explicit formula in \cite{Bui-Florea}.
By computing the logarithmic derivative $\frac{\cL'}{\cL}$ in two different ways from \eqref{eq:Eulerprod} and \eqref{eq:rootsprod}, we get, for $n>0$, 
\[-q^\frac{n}{2} \sum_{j=1}^{d-1} e\left(-n \theta_{\chi_c,j}\right)=\sum_{f\in \mathcal{M}_n} \Lambda(f) \chi_c(f),\]
where $e(x) = e^{2 \pi i x}$.
The above can be written as 
\[-\sum_{j=1}^{d-1} e\left(n \theta_{\chi_c,j}\right)=\sum_{f\in \mathcal{M}_{|n|}} \frac{\Lambda(f) \overline{\chi_c(f)}}{|f|^\frac{1}{2}},\]
which is valid for $n$ both positive and negative.

We recall that 
\begin{align*}
\Phi(\theta)=\sum_{k\in \Z}\phi((d-1)(k-\theta)) =: \sum_{k\in \Z} f(k)             . \end{align*}
We compute
\begin{align*}
\widehat{f}(n)
=\int_{-\infty}^\infty \phi((d-1)(x-\theta))e(nx)dx
= \frac{\widehat{\phi}\left(\frac{n}{d-1}\right) e\left(n\theta\right)}{d-1}, 
\end{align*}
and by Poisson summation
\begin{align*}
\Phi(\theta)=\frac{1}{d-1} \sum_{n\in \Z} \widehat{\phi}\left(\frac{n}{d-1}\right) e\left(n\theta\right) = \frac{1}{d-1} \sum_{|n| \leq v (d-1) }\widehat{\phi}\left(\frac{n}{d-1}\right) e\left(n\theta\right),
             \end{align*}
             where the second equality follows from the fact that the support of $\widehat{\phi}$ is  in $(-v, v)$.
             Let
             \begin{align*} 
             N := v (d-1). \end{align*}
Summing over all the $\theta_{\chi_c,j}$ for $j=1,\ldots,d-1$,  we have that
 \begin{align*}
\Sigma(\phi, \chi_c) = \sum_{j=1}^{d-1} \Phi(\theta_{\chi_c,j}) =& \widehat{\phi}(0)+ \frac{1}{d-1} \sum_{0<|n|\leq N} \widehat{\phi}\left(\frac{n}{d-1}\right) \sum_{j=1}^{d-1} e(n\theta_{\chi_c,j})\\
=&  \widehat{\phi}(0)- \frac{1}{d-1}\sum_{1 \leq n \leq N} \widehat{\phi}\left(\frac{n}{d-1}\right) \sum_{f\in \mathcal{M}_{n}} \frac{\Lambda(f) (\chi_c(f)+\overline{\chi_c(f)})}{|f|^\frac{1}{2}}, 
\end{align*}
and replacing in \eqref{OLD}, this gives
$$ \left\langle\Sigma(\phi,\chi_c)\right\rangle_{\mathcal{F}_\ell(d)} =  \widehat{\phi}(0) - \frac{1}{(d-1)|\mathcal{H}_d|}\sum_{1 \leq n\leq N} \widehat{\phi}\left(\frac{n}{d-1}\right) \sum_{c \in \mathcal{H}_d} \sum_{f\in \mathcal{M}_{n}} \frac{\Lambda(f) (\chi_c(f)+\overline{\chi_c(f)})}{|f|^\frac{1}{2}}.$$

Now note that the contribution from $f = \pi^k$ with $k \geq 2$  and $k \not \equiv 0 \pmod{\ell}$ is bounded up to a constant by $q^{-d/2+\varepsilon d}$. Indeed, note that using the Lindel\"of hypothesis, after removing the square-free condition on $c$, we have that
$$\sum_{c \in \mathcal{H}_d} \chi_c(f) \ll q^{d/2+\varepsilon d},$$
and then we trivially bound the contribution from the primes.

The contribution from $f=\pi^k$ with $k \equiv 0 \pmod{\ell}$ is bounded up to a constant by $1/d$. 
So it is enough to consider the contribution from $f = \pi$ for $\pi$ a prime. We rewrite
\begin{align}
\label{sigma1avg}
\left\langle\Sigma(\phi,\chi_c)\right\rangle_{\mathcal{F}_\ell(d)} =  \widehat{\phi}(0)- S+O\left(\frac{1}{d}\right),
\end{align}
where $S$ corresponds to the contribution from $f = \pi$, for $\pi$ a prime.
Namely, we have
\begin{align}
\label{expression_s}
S= \frac{1}{(d-1)|\mathcal{H}_d|} & \sum_{1 \leq n\leq N} \widehat{\phi}\left(\frac{n}{d-1}\right) \sum_{ \pi \in \mathcal{P}_{n}} \frac{\deg(\pi)}{\sqrt{|\pi|}} \sum_{c \in \mathcal{H}_d}( \chi_c(\pi)+ \overline{\chi_c(\pi)}) .
\end{align}

\begin{remark} \label{remark-sum-over-n}
 Note that by using the Lindel\"{o}f bound on the sum over $c$, we get the trivial bound
 \begin{equation}
 S \ll q^{\frac{N}{2} - \frac{d}{2}+\varepsilon d}.\label{trivial}
 \end{equation}
Breaking the sum over $n$ in \eqref{expression_s} in the two sums $1 \leq n \leq d(1-3\varepsilon)$ and
$d(1-3\varepsilon) < n \leq N$ for any $\varepsilon >0$, and using \eqref{trivial} to bound the first sum, we have
 $$
 S = \frac{1}{(d-1)|\mathcal{H}_d|}  \sum_{d(1-\varepsilon) \leq n\leq N} \widehat{\phi}\left(\frac{n}{d-1}\right) \sum_{ \pi \in \mathcal{P}_{n}} \frac{\deg(\pi)}{\sqrt{|\pi|}} \sum_{c \in \mathcal{H}_d}( \chi_c(\pi)+ \overline{\chi_c(\pi)})  + O \left(q^{-\varepsilon d} \right),
 $$
 so we can consider that ${d(1-\varepsilon) \leq n\leq N}$ when we need $n$ large (after a relabeling of $\varepsilon$). We will use this in Section \ref{proof} to choose the value of some parameters.
 \end{remark}

We write $S=S_1+\overline{S_1}$, where $S_1$ corresponds to the sum involving the character $\chi_c(\pi)$. We have that 

 \begin{align*}
S_1 &= \frac{1}{(d-1)|\mathcal{H}_d|}  \sum_{1 \leq n\leq N} \widehat{\phi}\left(\frac{n}{d-1}\right) \sum_{ \pi \in \mathcal{P}_{n}} \frac{\deg(\pi)}{\sqrt{|\pi|}} \sum_{c \in \mathcal{H}_d} \chi_c(\pi) \\
 & =  \frac{1}{(d-1)|\mathcal{H}_d|}  \sum_{1 \leq n\leq N} \widehat{\phi}\left(\frac{n}{d-1}\right) \sum_{ \pi \in \mathcal{P}_{n}} \frac{\deg(\pi)}{\sqrt{|\pi|}} \sum_{D \in \mathcal{M}_{\leq d/2}} \mu(D) \chi_{D^2}(\pi) \sum_{c \in \mathcal{M}_{d-2\deg(D)}} \chi_c(\pi).
 \end{align*}

 We further write 
 $$S_1=S_{1, \leq y}+S_{1,>y},$$ where $S_{1,\leq y}$ corresponds to the terms with $\deg(D) \leq y$ and where $S_{1,>y}$ corresponds to the terms with $\deg(D)>y$. 
 For $S_{1,>y}$, we use the Weil bound \eqref{weil} for the sum over primes, and we have that 
 $$ \sum_{\pi \in \mathcal{P}_n}  \chi_{\pi}(cD^2) \ll q^{n/2} \frac{d}{n}.$$
 Trivially bounding the sums over $c$ and $D$, we get that 
 \begin{equation}S_{1,>y} \ll d q^{-y},
 \label{s1>y}
 \end{equation}

We will now bound the term $S_{1, \leq y}$. 
We write 
\begin{equation}
\label{s1<y}
S_{1,\leq y} = S_{11}+S_{12},
\end{equation} where $S_{11}$ corresponds to the sum over $\deg(\pi) \equiv 0 \pmod{\ell}$, and where $S_{12}$ corresponds to the sum over $\deg(\pi) \not\equiv 0\pmod{\ell}$.  We will focus on $S_{12}$,   since $S_{11}$ is similar.  Using Poisson summation (Lemma \ref{poisson}) on the sum over $c$, we have
\begin{align*}
S_{12} &=  \frac{q^{d+1/2}}{(d-1)|\mathcal{H}_d|}  \sum_{\substack{ 1 \leq n\leq N \\ n \not \equiv 0 \pmod \ell}} \widehat{\phi}\left(\frac{n}{d-1}\right) \sum_{ \pi \in \mathcal{P}_{n}} \frac{\deg(\pi) \overline{\varepsilon(\chi_{\pi})}}{|\pi|^{3/2}} \sum_{D \in \mathcal{M}_{\leq y}} \frac{ \mu(D) \chi_{D^2}(\pi)}{|D|^2} \\
& \times \sum_{V \in \mathcal{M}_{n-d+2\deg(D)-1}} G_\ell(V,\pi).
\end{align*} 
We interchange the sums over $\pi$ and $V$. We keep in mind that $\varepsilon(\chi_{\pi})$ only depends on the degree of $\pi$.  For simplicity,  we denote $\varepsilon(\chi_{\pi})=\varepsilon(\deg(\pi))$.  Then we get that
\begin{align*}
S_{12} &= \frac{q^{d+1/2}}{(d-1)|\mathcal{H}_d|}  \sum_{\substack{ 1 \leq n\leq N \\ n \not \equiv 0 \pmod \ell}} \widehat{\phi}\left(\frac{n}{d-1}\right) \frac{n \overline{\varepsilon(n)}}{q^{3n/2}}\sum_{D \in \mathcal{M}_{\leq y}} \frac{ \mu(D)}{|D|^2}   \sum_{V \in \mathcal{M}_{n-d+2\deg(D)-1}} \\
& \times  \sum_{\substack{ \pi \in \mathcal{P}_{n} \\ (\pi, VD)=1}} G_\ell(VD^{2\ell-2},\pi).
\end{align*}
\subsection{Vaughan's identity}
Following Heath-Brown and Patterson \cite{hbp},  we define $\Sigma_j(n,R,U)$ by
$$\Sigma_j(n,R, U) = \sum_{\substack{a,b,c}} \Lambda(a) \mu(b) {G}_\ell (R, abc),$$
where 
\begin{align*}
\text{if   } &j=0 &\deg(abc)=n, (R, abc)=1, \, \, \,  \,  &\deg(bc) \leq U, \\
&j=1 &\deg(abc)=n, (R, abc)=1, \, \, \, \,   &\deg(b) \leq U, \\
&j=2' &\deg(abc)=n, (R, abc)=1,\, \, \, \,   &\deg(ab) \leq U, \\
&j=2'' &\deg(abc)=n, (R, abc)=1,\, \, \, \,   &\deg(a) \leq U, \deg(b) \leq U, \deg(ab) >U \\
&j=3 &\deg(abc)=n, (R, abc)=1,\, \, \, \,   &\deg(a) > U,  \deg(bc)>U, \deg(b) \leq U,\\
&j=4 &\deg(abc)=n, (R, abc)=1,\, \, \, \,   &\deg(bc) \leq U,  \deg(a) \leq U, 
\end{align*} where $U=U(n)$.
Note that we have 
\begin{equation}
\label{sumsigma}
\Sigma_0(n,R,U) + \Sigma_{2'}(n,R,U)+ \Sigma_{2''}(n,R,U)+\Sigma_3(n,R,U) = \Sigma_1(n,R,U)+\Sigma_4(n,R,U).
\end{equation}
If we assume that  $U=U(n) <n/2$, then $\Sigma_4(n,R,U)=0$.

Now similarly as in \cite{hbp}, we define
$$H(n, R) := \sum_{\substack{c \in \mathcal{M}_n\\ (c, R)=1}} \Lambda(c) G_\ell(R, c) = \sum_{\substack{\pi \in \mathcal{P}_n\\ (\pi, R)=1}} G_\ell(R, \pi) \deg(\pi)$$
by Lemma \ref{gauss}(i).
Note that we have
\begin{equation}
H(n, R) = \Sigma_0(n,R, U).
\label{hn}
\end{equation}
Indeed,  if we denote $d=bc$, then we rewrite
$$\Sigma_0(n,R,U) = \sum_{\substack{\deg(ad)= n \\ (ad, R)=1 \\\deg(d) \leq U}} \Lambda(a) G_\ell(R, ad) \sum_{b|d} \mu(b).$$
The sum over $b$ above is $0$ unless $d=1$,  which proves \eqref{hn}.

Now for $\alpha \in \mathbb{F}_q[t]$ monic,  let 
\begin{equation}
\label{fn}
F(n, R, \alpha)= \sum_{\substack{c \in \mathcal{M}_n \\ (c,R)=1 \\ \alpha|c}} G_\ell(R, c).
\end{equation}
We have the following lemma, which is the analogue of Proposition 1 in \cite{hbp}.
\begin{lemma}
\label{lemma_sigma}
We have the following.
\begin{enumerate}
\item $ \displaystyle \Sigma_1(n,R,U)= \sum_{\substack{\deg(b) \leq U\\ (b,R)=1}} \mu(b)  (n-\deg(b)) F(n,R,b)$,
\item $ \displaystyle \Sigma_{2'}(n,R, U) =  \sum_{\substack{\deg(b) \leq U \\ (b,R)=1}} \mu^2(b) h(b) F(n,R, b)$,
\end{enumerate}
where $|h(b)| \leq \deg(b)$.
\end{lemma}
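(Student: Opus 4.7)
\textbf{Proof plan for Lemma \ref{lemma_sigma}.}

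My plan is to prove both parts by reparametrizing the triple sum in the definition of $\Sigma_j(n,R,U)$: for part (1) I group by the variable $b$, and for part (2) I group by the product $e = ab$, then compute an inner Dirichlet-type convolution.

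For part (1), I would fix $b$ with $(b,R)=1$ and $\deg(b)\leq U$, and for each such $b$, let $m = abc$. The remaining conditions are then: $m \in \mathcal{M}_n$, $(m,R)=1$, $b\mid m$. For a fixed such $m$ and $b$, the number of ways to write $m/b = ac$ weighted by $\Lambda(a)$ is
\[
\sum_{\substack{a,c\in \mathcal{M}\\ ac = m/b}} \Lambda(a) = \sum_{a \mid m/b} \Lambda(a) = \deg(m/b) = n - \deg(b),
\]
by the analogue of $\sum_{d\mid N}\Lambda(d) = \log N$ in $\mathbb{F}_q[t]$. Pulling the factor $n-\deg(b)$ out of the sum over $m$ and recognizing the remaining sum as $F(n,R,b)$ (see \eqref{fn}) gives
\[
\Sigma_1(n,R,U) = \sum_{\substack{\deg(b)\leq U\\ (b,R)=1}} \mu(b)\,(n-\deg(b))\,F(n,R,b),
\]
which is (1).

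For part (2), I would first group by $e = ab$. For fixed $e$ with $(e,R)=1$ and $\deg(e)\leq U$, and for fixed $m$ with $e\mid m$ and $(m,R)=1$, the inner sum becomes
\[
\sum_{\substack{a,b\in \mathcal{M}\\ ab = e}} \Lambda(a)\mu(b).
\]
The key computation is that the Dirichlet convolution $\Lambda \ast \mu$ equals $-\mu \cdot \deg$ in $\mathbb{F}_q[t]$, which follows from the identity of generating series
\[
\left(-\tfrac{\zeta'_q}{\zeta_q}\right)\cdot \tfrac{1}{\zeta_q} \;=\; \tfrac{d}{ds}\!\left(\tfrac{1}{\zeta_q}\right),
\]
(or equivalently from M\"obius inversion of $\sum_{d\mid e} \Lambda(d) = \deg(e)$). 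Therefore $\sum_{ab=e}\Lambda(a)\mu(b) = -\mu(e)\deg(e)$, which vanishes unless $e$ is square-free. Writing $-\mu(e)\deg(e) = \mu^2(e) h(e)$ with $h(e) := -\mu(e)\deg(e)$, the bound $|h(e)|\leq \deg(e)$ is immediate, and summing over $m$ yields
\[
\Sigma_{2'}(n,R,U) = \sum_{\substack{\deg(b)\leq U\\ (b,R)=1}} \mu^2(b)\, h(b)\, F(n,R,b).
\]

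There is no serious obstacle here; the lemma is essentially bookkeeping combined with the standard convolution identities $\mathbf{1}\ast\Lambda = \deg$ and $\Lambda\ast\mu = -\mu\deg$ in $\mathbb{F}_q[t]$. The only point requiring a little care is ensuring that the coprimality condition $(abc,R)=1$ is equivalent, once $b$ (resp.\ $e=ab$) is fixed with $(b,R)=1$ (resp.\ $(e,R)=1$), to the condition $(m,R)=1$ on $m=abc$, which is clear because every prime divisor of $abc$ divides $m$ and conversely.
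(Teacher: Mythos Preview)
Your proof is correct and follows essentially the same approach as the paper: both group by $b$ (using $\Lambda * \mathbf{1} = \deg$) for part~(1) and by $e = ab$ for part~(2). The one small difference is that you compute the convolution $\Lambda * \mu = -\mu\cdot\deg$ explicitly, which makes the $\mu^2$ factor automatic; the paper instead leaves $h(d) = (\Lambda*\mu)(d)$ unevaluated, records only $|h(d)|\le\deg(d)$, and inserts $\mu^2(d)$ by appealing to the vanishing of $G_\ell(R,cd)$ for non-square-free $cd$.
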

\begin{proof}
Let $d = ac$.  Then
$$\Sigma_1(n,R,U) = \sum_{\substack{\deg(bd) = n \\ (bd, R)=1\\ \deg(b) \leq U}}\mu(b) G_\ell(R, bd) \sum_{a|d} \Lambda(a).$$
Note that $\sum_{a|d} \Lambda(a) = \deg(d)$. Now let $e = bd$.  Then
\begin{align*}
\Sigma_1(n,U)& =\sum_{\substack{\deg(b) \leq U\\ (b,R)=1}} \mu(b) \sum_{\substack{ \deg(e) = n \\ (e, R)=1 \\ b|e}} G_\ell(R, e) (\deg(e)-\deg(b))=\sum_{\substack{\deg(b) \leq U\\ (b,R)=1}} \mu(b)  (n-\deg(b)) F(n,R,b).
\end{align*}
To prove the second identity,  let $d=ab$,  and let
$$h(d) = \sum_{\substack{a|d}} \Lambda(a) \mu(d/a).$$
Note that $|h(d)| \leq \deg(d)$.

We have that
$$\Sigma_{2'}(n,R, U) = \sum_{\substack{\deg(cd) =n\\(cd,R)=1 \\ \deg(d) \leq U}} G_\ell(R, cd) h(d) = \sum_{\substack{\deg(d) \leq U \\ (d,R)=1}}\mu^2(d)  h(d) F(n,R,d),$$
where we have used one more time that if $(cd,R)=1$, then $G_\ell(R, cd)=0$ unless $cd$ is square-free. This finishes the proof.
\end{proof}

Now using \eqref{sumsigma}, with $U<n/2$, we rewrite 
\begin{align}
\label{S12}
S_{12} & =  \frac{q^{d+1/2}}{|\mathcal{H}_d|}  \sum_{\substack{ 1 \leq n\leq N \\ n \not \equiv 0 \pmod \ell}} \widehat{\phi}\left(n\right) \frac{ \overline{\varepsilon(n)}}{q^{3n/2}}\sum_{D \in \mathcal{M}_{\leq y}} \frac{ \mu(D)}{|D|^2}   \sum_{V \in \mathcal{M}_{n-d+2\deg(D)-1}}\Sigma_0(n,VD^{2\ell-2},U) \\
&= \frac{q^{d+1/2}}{|\mathcal{H}_d|}  \sum_{\substack{ 1 \leq n\leq N \\ n \not \equiv 0 \pmod \ell}} \widehat{\phi}\left(n\right) \frac{ \overline{\varepsilon(n)}}{q^{3n/2}}\sum_{D \in \mathcal{M}_{\leq y}} \frac{ \mu(D)}{|D|^2}   \sum_{V \in \mathcal{M}_{n-d+2\deg(D)-1}} \Big( \Sigma_1(n,VD^{2\ell-2},U) \nonumber \\
&-\Sigma_{2'}(n,VD^{2\ell-2},U)-\Sigma_{2''}(n,VD^{2\ell-2},U)-\Sigma_3(n,VD^{2\ell-2},U)\Big) .\nonumber 
\end{align}
We will denote the contribution from $\Sigma_1$ and $\Sigma_{2'}$ by Type I terms, and the contribution from $\Sigma_{2''}$ and $\Sigma_3$ by Type II terms. To bound the Type I terms, we will use our results on the generating series of Gauss sums, and to bound the contribution from Type II sums, we use the large sieve inequality in Theorem \ref{largesieve}.
\section{Type II sums}
\label{typeII_section}
Here, we will bound the contribution from $\Sigma_{2''}$ in \eqref{S12}. Bounding the contribution from $\Sigma_3$ is similar. We denote the contribution from $\Sigma_{2''}$ by $S_{12}(\Sigma_{2''})$ and we rewrite it as
\begin{align}
\label{tb3}
S_{12}(\Sigma_{2''}) &= \frac{q^{d+1/2}}{ (d-1)|\mathcal{H}_d|}  \sum_{\substack{ 1 \leq n\leq N \\ n \not \equiv 0 \pmod \ell}} \widehat{\phi}\left( \frac{n}{d-1}\right) \frac{ \overline{\varepsilon(n)}}{q^{3n/2}}\sum_{D \in \mathcal{M}_{\leq y}} \frac{ \mu(D)}{|D|^2}   \sum_{V \in \mathcal{M}_{n-d+2\deg(D)-1}} \\
& \times \sum_{\substack{\deg(abc)=n \\ (abc, VD)=1 \\ \deg(a), \deg(b) \leq U \\ \deg(ab)>U}} \Lambda(a) \mu(b) G(VD^{2\ell-2}, abc). \nonumber
\end{align}
We recall that $U=U(n)$ is a function of $n$, and $U(n) < n/2$. 
Let $ab=\alpha$ and
$$A(\alpha) = \sum_{\substack{ab=\alpha \\ \deg(a), \deg(b)\leq U\\ (\alpha, VD^{2\ell-2})=1}} \Lambda(a) \mu(b) G(VD^{2\ell-2},\alpha).$$
Using Lemma \ref{gauss}, we have that
\begin{align}
\label{in4}
\Sigma_{2''}(n,VD^{2\ell-2},U) = \sum_{\substack{\deg(abc)=n \\ (abc, VD)=1 \\ \deg(a), \deg(b)\leq U \\ \deg(ab)>U}} \Lambda(a) \mu(b) G(VD^{2\ell-2}, abc) = \sum_{\substack{\alpha c \in \mathcal{H}_n \\ U < \deg(\alpha) \leq 2U}} A(\alpha) G(VD^{2\ell-2},c) \Big(\frac{\alpha}{c} \Big)_{\ell}^2.
\end{align}
Note that in the above equation, we removed the coprimality condition by restricting to square-free polynomials $\alpha, c$ (see Lemma \ref{gauss}.) Using Cauchy--Schwarz, we now have that
\begin{align} 
\Sigma_{2''}(n,VD^{2\ell-2},U) &= \sum_{U<m \leq 2U} \sum_{\alpha \in \mathcal{H}_m} \sum_{c \in \mathcal{H}_{n-m}} A(\alpha) G(VD^{2\ell-2},c) \Big(\frac{\alpha}{c} \Big)_{\ell}^2 \nonumber  \\
& \leq \sum_{U < m \leq 2U} \Big(  \sum_{\alpha \in \mathcal{H}_m} |A(\alpha)|^2 \Big)^{1/2} \Big(\sum_{\alpha \in \mathcal{H}_m} \Big| \sum_{c \in \mathcal{H}_{n-m}} G(VD^{2\ell-2},c) \Big(\frac{\alpha}{c} \Big)_{\ell}^2 \Big|^2 \Big)^{1/2}.\label{interm3}
\end{align}
Using Lemma \ref{gauss}, note that $|A(\alpha) | \ll |\alpha|^{1/2+\varepsilon}$. Then
$$\sum_{\alpha \in \mathcal{H}_m} |A(\alpha)|^2  \ll q^{2m+\varepsilon m}.$$
Now using the Large Sieve Inequality as in Theorem \ref{largesieve}, and keeping in mind that $|G(VD^{2\ell-2},c)|=|c|^{1/2}$ (see Lemma \ref{gauss}), we have that
\begin{align*}
\sum_{\alpha \in \mathcal{H}_m}  \Big| \sum_{c \in \mathcal{H}_{n-m}} G(VD^{2\ell-2},c) \Big(\frac{\alpha}{c} \Big)_{\ell}^2 \Big|^2  & \ll q^{\varepsilon(m+n)} (q^m+q^{n-m}+q^{2n/3}) \sum_{c \in \mathcal{H}_{n-m}} |G(VD^{2\ell-2},c)|^2 \\
& \ll q^{\varepsilon(m+n)} \Big(q^{2n-m} + q^{3(n-m)}+q^{\frac{8n}{3}-2m} \Big).
\end{align*}

Combining equations \eqref{in4}, \eqref{interm3} and the two bounds above, we get that 
\begin{align}
\label{typeII}
\Sigma_{2''}(n,VD^{2\ell-2},U)  \ll q^{\varepsilon n } \Big( q^{n+U} + q^{\frac{3n}{2}-\frac{U}{2}} + q^{\frac{4n}{3}}\Big).
\end{align}
Now we pick 
\begin{align*} 
 U=U(n) \leq n/3,
 \end{align*} and the middle term of \eqref{typeII}
 dominates. 
We also assume that $U=U(n)$ is a (positive) non-decreasing function of $n$, such that
\begin{align} \label{sum-over-n}
 \sum_{1 \leq n \leq N} q^{U(n)} \leq q^{U(N)} q^{\varepsilon d}
\end{align}
where we recall that $N=v(d-1)$. We will often use that fact in the computations of the next sections.

Replacing \eqref{typeII} in  \eqref{tb3}, trivially bounding the sums over $V$ and $D$, and using \eqref{sum-over-n}, we get that 
\begin{align*}
S_{12}(\Sigma_{2''}) \ll q^{N-\frac{U(N)}{2}-d+y+N\varepsilon},
\end{align*}
Now choosing $y = 100\varepsilon d$, and after a  relabeling of the $\varepsilon$, we get for that for $U(n) \leq \tfrac{n}{3},$
\begin{align}
\label{s12sigma2''}
S_{12}(\Sigma_{2''}) \ll q^{N-\frac{U(N)}{2}-d+d \varepsilon}.
\end{align}

\section{Type I sums}
\label{typeI}
Here, we will bound the contribution from the Type I terms in equation \eqref{S12}, coming from the terms involving $\Sigma_1$ and $\Sigma_{2'}$. We denote these terms by $S_{12}(\Sigma_1)$ and $S_{12}(\Sigma_{2'})$ respectively.

We only bound $S_{12}(\Sigma_{2'})$, the treatment of $S_{12}(\Sigma_{1})$ being similar. Using Lemma \ref{lemma_sigma}, we rewrite
\begin{align*}
S_{12}(\Sigma_{2'}) &= \frac{q^{d+1/2}}{(d-1)|\mathcal{H}_d|}  \sum_{\substack{ 1 \leq n\leq N \\ n \not \equiv 0 \pmod \ell}} \widehat{\phi}\left(\frac{n}{d-1}\right) \frac{ \overline{\varepsilon(n)}}{q^{3n/2}}\sum_{D \in \mathcal{M}_{\leq y}} \frac{ \mu(D)}{|D|^2}   \sum_{V \in \mathcal{M}_{n-d+2\deg(D)-1}} \\
& \times   \sum_{\substack{\deg(b) \leq U \\ (b,VD)=1}} \mu^2(b) h(b)  F(n,VD^{2\ell-2},b),
\end{align*}
where recall that $F(n,VD^{2\ell-2},b)$ is given in equation \eqref{fn}. 

Using Perron's formula, we rewrite 
\begin{align}
S_{12}(\Sigma_{2'}) &= \frac{q^{d+1/2}}{(d-1)|\mathcal{H}_d|}  \sum_{\substack{ 1 \leq n\leq N \\ n \not \equiv 0 \pmod \ell}} \widehat{\phi}\left(\frac{n}{d-1}\right) \frac{ \overline{\varepsilon(n)}}{q^{3n/2}}\sum_{D \in \mathcal{M}_{\leq y}} \frac{ \mu(D)}{|D|^2}   \sum_{V \in \mathcal{M}_{n-d+2\deg(D)-1}}  \sum_{\substack{\deg(b) \leq U \\ (b,VD)=1}} \mu^2(b)  h(b) \nonumber  \\
& \times \frac{1}{2 \pi i} \oint \frac{\sum_{i=0}^{\ell-1}\psi_{VD}^{(i)}(VD^{2\ell-2},b, z)}{z^{n+1}} \, dz, \label{initial_formula}
 \end{align}
where we are integrating along a small circle around the origin.

  Using Theorem \ref{thm:gettingridofa}, we have 
\begin{align}
S_{12}(\Sigma_{2'}) 
&=  \frac{q^{d+1/2}}{(d-1)|\mathcal{H}_d|}  \sum_{\substack{ 1 \leq n\leq N \\ n \not \equiv 0 \pmod \ell}} \widehat{\phi}\left(\frac{n}{d-1}\right) \frac{ \overline{\varepsilon(n)}}{q^{3n/2}}\sum_{D \in \mathcal{M}_{\leq y}} \frac{ \mu(D)}{|D|^2}   \sum_{V \in \mathcal{M}_{n-d+2\deg(D)-1}} \sum_{\substack{\deg(b) \leq U \\ (b,VD)=1}}  \mu^2(b) h(b)\nonumber  \\
& \times \frac{1}{2 \pi i} \oint_{|z|=q^{-1-1/\ell-\varepsilon}} \sum_{i=0}^{\ell-1} \frac{1}{z^{n+1}} G_\ell(VD^{2\ell-2},b) z^{\deg(b)}\sum_{\substack{E\in \mathcal{M}\\ E \mid r_\ell^* }}\mu(E)G_\ell(b^{\ell-2}r_1\cdots r_{\ell-1}^{\ell-1},E)z^{\deg(E)} \nonumber \\&\times
  \prod_{\pi \mid bEr_1\cdots r_{\ell-1}}
\left(1 - q^{(\ell-1)\deg(\pi)} \legl{-1}{\pi}^{\nu_\pi(bEr_1\cdots r_{\ell-2}r_{\ell-1})+1}z^{\ell \deg(\pi)}\right)^{-1}\nonumber \\
&\times  \sum_{\substack{d_{\ell-2} \mid bEr_{\ell-2}\\ d_1\mid r_1, \dots, d_{\ell-3}\mid r_{\ell-3}}} \mu(d_1d_2\cdots d_{\ell-2} ) q^{\deg(d_1d_2^2\cdots d_{\ell-2}^{\ell-2})}  z^{\deg(d_1^2d_2^3\cdots d_{\ell-2}^{\ell-1}  )} \prod_{1\leq k< j \leq \ell-2} \overline{\legl{d_k}{d_j}^{k(j+1)}} \nonumber \\
& \times \prod_{j=1}^{\ell-2} G_{\ell,j+1} \left( \frac{b^{\ell-2}E^{\ell-2}r_1 r_2^2 \cdots r_{\ell-1}^{\ell-1}}{d_{1}d_2\cdots d_{\ell-2} ^{\ell-2}} , d_{j} \right)\nonumber \\
&  \times \psi^{(i-\deg(b)-\deg(E)- \deg(d_1^2d_2^3\cdots d_{\ell-2}^{\ell-1} ))} \left( \frac{b^{\ell-2}E^{\ell-2}r_1 r_2^2 \cdots r_{\ell-1}^{\ell-1}}{\prod_{j=1}^{\ell-2}d_j^{2j+2-\ell}}, z \right)\, dz,\label{long_expression}
 \end{align} 
 where, as before, $VD^{2\ell-2} = r_1 \ldots r_{\ell}^{\ell}$ and $r_{\ell}^{*}$ is the product of the primes dividing $r_{\ell}$ 
but not $r_1 \ldots r_{\ell-1}$. In the above, we have shifted the contour of integration to  $|z|=q^{-1-1/\ell-\varepsilon}$.
 We will bound the term $S_{12}(\Sigma_{2'})$ differently depending on the size of $\ell$.

 \subsection{The cases $\ell=3,4$}
Let $\alpha = r_{\ell}^{\ell} \prod_{j=1}^{\ell-2} d_j^{2j+2-\ell}/ E^{\ell-2}$, which is a polynomial for $\ell=3,4$. By trivially bounding the sums over $E, \pi, d_j$ we get that
\begin{align*}
S_{12}(\Sigma_{2'}) & \ll\max_{\substack{|z|=q^{-1-1/\ell-\varepsilon}}}\frac{1}{d}\sum_{n \leq N} \frac{q^{n (1+1/\ell+\varepsilon)}}{q^{3n/2}} \sum_{D \in \mathcal{M}_{\leq y}} \frac{1}{|D|^2}  \sum_{V \in \mathcal{M}_{n-d+2\deg(D)-1}}  \sum_{\substack{\deg(b) \leq U \\ (b,VD)=1}} \frac{\mu^2(b)}{|b|^{1/2+1/\ell-\varepsilon}}  \nonumber \\
& \times \sum_{\substack{\alpha \in \mathcal{M}\\\alpha| b^{\ell-2}VD^{2\ell-2}}} |VD^{2\ell-2}|^{\varepsilon} \sum_{i=0}^{\ell-1} \Big| \psi^{(i)} \Big(b^{\ell-2} \frac{VD^{2\ell-2}}{\alpha},z \Big) \Big|. 
\end{align*}
Since $\alpha| b^{\ell-2} VD^{2\ell-2}$, we write $\alpha = \alpha_1 \alpha_2 \alpha_3$, where $\alpha_1 | V$, $\alpha_2 |D^{2\ell-2}$, and $\alpha_3|b^{\ell-2}$.  Since $(b,VD)=1$ note that we have $(\alpha_3,\alpha_2)=1$. 
We write $V = \alpha_1 V_1$, and then we get that
\begin{align*}
S_{12}(\Sigma_{2'}) &\ll  \max_{\substack{|z|=q^{-1-1/\ell-\varepsilon}}} \frac{1}{d} \sum_{n \leq N} \frac{q^{n (1+1/\ell+\varepsilon)}}{q^{3n/2}} \sum_{D \in \mathcal{M}_{\leq y}} \frac{1}{|D|^2} \sum_{\deg(b) \leq U} \frac{\mu^2(b)}{|b|^{1/2+1/\ell-\varepsilon}} \sum_{\deg(\alpha_1) \leq n-d+2\deg(D)-1}  \sum_{\alpha_2 | D^{2\ell-2}}  \nonumber \\
& \times  \sum_{\substack{\alpha_3| b^{\ell-2} \\ (\alpha_3,\alpha_2)=1}} \sum_{V_1 \in \mathcal{M}_{n-d+2\deg(D)-1-\deg(\alpha_1)}} \sum_{i=0}^{\ell-1}  \Big| \psi^{(i)} \Big(  V_1 \frac{D^{2\ell-2}b^{\ell-2}}{\alpha_2 \alpha_3},z \Big) \Big|,
\end{align*} 
Replacing $\alpha_2 \mapsto D^{2\ell-2}/\alpha_2$ and $\alpha_3 \mapsto b^{\ell-2}/\alpha_3$, we get that
\begin{align}
S_{12}(\Sigma_{2'}) &\ll \max_{\substack{|z|=q^{-1-1/\ell-\varepsilon}}}\frac{1}{d} \sum_{n \leq N} \frac{q^{n (1+1/\ell+\varepsilon)}}{q^{3n/2}} \sum_{D \in \mathcal{M}_{\leq y}} \frac{1}{|D|^2} \sum_{\deg(b) \leq U} \frac{\mu^2(b)}{|b|^{1/2+1/\ell-\varepsilon}} \sum_{\deg(\alpha_1) \leq n-d+2\deg(D)-1}  \sum_{\alpha_2 | D^{2\ell-2}}  \nonumber \\
& \times  \sum_{\substack{\alpha_3| b^{\ell-2} \\ (\alpha_3,\alpha_2)=1}} \sum_{V_1 \in \mathcal{M}_{n-d+2\deg(D)-1-\deg(\alpha_1)}}  \sum_{i=0}^{\ell-1} \Big| \psi^{(i)} (  V_1 \alpha_2 \alpha_3,z ) \Big|. \label{ssigma2}
\end{align}

Using Cauchy--Schwarz and Proposition \ref{lov} with $j=1$ and  $\sigma=1+1/\ell+\varepsilon$ in the form
$$ \sum_{V \in \mathcal{M}_n} \Big| \psi^{(i)}(hV, z)\Big|^2 \ll  q^{n+\varepsilon n} |h|^{1/3+\varepsilon},$$
we get that 
\begin{align*}
S_{12}(\Sigma_{2'}) & \ll \frac{1}{d} \sum_{n \leq N} \frac{q^{n (1+1/\ell+\varepsilon)}}{q^{3n/2}} \sum_{D \in \mathcal{M}_{\leq y}} \frac{1}{|D|^2} \sum_{\deg(b) \leq U} \frac{\mu^2(b)}{|b|^{1/2+1/\ell}} \sum_{\deg(\alpha_1) \leq n-d+2\deg(D)-1} \sum_{\alpha_2|D^{2\ell-2}} \nonumber  \\
& \times \sum_{\substack{\alpha_3|b^{\ell-2} \\ (\alpha_3,\alpha_2)=1}}  q^{(n-d+2\deg(D)-\deg(\alpha_1))(1+\varepsilon) } |\alpha_2\alpha_3|^{1/6}.
\end{align*}
Trivially bounding all the other terms and using \eqref{sum-over-n} , it follows that
\begin{equation*}
S_{12}(\Sigma_{2'}) \ll q^{N(1/2+1/\ell+\varepsilon)-d+ \frac{U(N)(\ell-2)}{2}( 1/3+1/\ell) +  y(1+(\ell-1) /3)}.
\end{equation*}
Recall that $y=100\varepsilon d$, and after a relabeling of the $\varepsilon$, it follows that
\begin{equation}
S_{12}(\Sigma_{2'}) \ll q^{N(1/2+1/\ell)-d+ \frac{U(N)(\ell-2)}{2}( 1/3+1/\ell) + d \varepsilon},
\label{ssigma3}
\end{equation}
and for $\ell=4$, we get the bound
\begin{equation*}
S_{12}(\Sigma_{2'}) \ll q^{N(3/4)-d+ U(N) ( 1/3+1/\ell) + d \varepsilon}.
\end{equation*}

For $\ell=3$, we can improve the bound above. We shift the contour of integration in \eqref{long_expression} to  $|z|=q^{-1}$ and encounter the pole at $z^3=q^{-4}$. We write 
\begin{equation}
\label{s2_mt}
S_{12}(\Sigma_{2'}) = M_2+E,
\end{equation} where $M_2$ corresponds to the residue at $z^3=q^{-4}$, and where $E$ is the expression in \eqref{long_expression} with the contour of integration $|z|=q^{-1}$.

 Similarly to equation \eqref{ssigma2} with $\ell=3$, we have that
\begin{align*}
E & \ll \max_{\substack{|z|=q^{-1}}}  \frac{1}{d} \sum_{n \leq N} \frac{q^{n +n \varepsilon}}{q^{3n/2}} \sum_{D \in \mathcal{M}_{\leq y}} \frac{1}{|D|^2} \sum_{\deg(b) \leq U} \frac{\mu^2(b)}{|b|^{1/2-\varepsilon}} \sum_{\deg(\alpha_1) \leq n-d+2\deg(D)-1}  \sum_{\alpha_2 | D^{4}}  \nonumber \\
& \times \sum_{\substack{\alpha_3|b \\ (\alpha_3,\alpha_2)=1}} \sum_{V_1 \in \mathcal{M}_{n-d+2\deg(D)-1-\deg(\alpha_1)}}\sum_{i=0}^{\ell-1}  \Big| \psi^{(i)} (  V_1 \alpha_2 \alpha_3,z ) \Big|. 
\end{align*}
Since $y=100 \varepsilon d$, and using Corollary \ref{cor_lov}, we have
\begin{align*}
E & \ll \sum_{n \leq N} \frac{q^{n(1+\varepsilon)}}{q^{3n/2}} \sum_{D \in \mathcal{M}_{\leq y}} \frac{1}{|D|^2} \sum_{\deg(b) \leq U} \frac{1}{|b|^{1/2-\varepsilon}} \sum_{\deg(\alpha_1) \leq n-d+2\deg(D)-1}  \sum_{\alpha_2 | D^{4}}  \nonumber \\
& \times \sum_{\substack{\alpha_3|b \\ (\alpha_3,\alpha_2)=1}} q^{(n-d+2\deg(D)-1-\deg(\alpha_1))(1+\varepsilon)}   |\alpha_2 \alpha_3|^{1/4}  \nonumber \\
& \ll q^{\frac{N}{2}-d+\frac{3U(N)}{4}+\varepsilon d}
\end{align*}
Using the above and equation \eqref{s2_mt}, we get that
\begin{equation}
S_{12}(\Sigma_{2'}) = M_2 + O \Big( q^{\frac{N}{2}-d+\frac{3U(N)}{4}+\varepsilon d}\Big).
\label{s22'}
\end{equation}
To bound the contribution from $M_2$, we proceed as before, but use Theorem \ref{residue} to bound the residue at $z^3 = q^{-4}$, which gives that
\begin{align*}
M_2  & \ll \sum_{n \leq N} \frac{q^{4n/3+n \varepsilon}}{q^{3n/2}} \sum_{D \in \mathcal{M}_{\leq y}} \frac{1}{|D|^2} \sum_{\substack{\deg(b) \leq U  \\(b,D)=1}} \frac{\mu(b)^2}{|b|^{5/6-\varepsilon}} \sum_{\substack{V \in \mathcal{M}_{n-d+2\deg(D)-1} \\ (V,b)=1}} \sum_{E|r_3^{*}} \frac{\mu(E)^2}{|E|^{5/6-\varepsilon}} \\
& \times \sum_{d|bEr_1} \frac{1}{|d|^{7/6-\varepsilon}}  \Big| \frac{bEr_1}{d} \Big|^{-1/6},
\end{align*}
where we write $VD^4=r_1r_2^2r_3^3$ with $r_1, r_2$ square-free and coprime, and where $r_3^{*}$ denotes the product of the primes dividing $r_3$ but not $r_1r_2$. Introducing the sum over $V$, we have that
\begin{align*}
\sum_{\substack{V \in \mathcal{M}_{n-d+2\deg(D)-1-\deg(\alpha_1)} \\ (V,b)=1}} \frac{1}{|r_1|^{1/6}} \ll q^{5/6(n-d+2\deg(D))} |D|^{\varepsilon}.
\end{align*}
Trivially bounding everything else, and since $y=100\varepsilon d$, we get that
$$M_2 \ll q^{5N/6-d+\varepsilon d}.$$

Using the equation above and \eqref{s22'}, it follows that 
\begin{equation}
\label{s12sigma'again}
S_{12}(\Sigma_{2'}) \ll  q^{\frac{N}{2}-d+\frac{3U(N)}{4}+ d \varepsilon}+q^{\frac{5N}{6}-d+\varepsilon d}, \;\; \text{for $\ell = 3$.}
\end{equation}

 \subsection{The cases $5 \leq \ell \leq 10$}
 We proceed as in the previous cases to bound the term \eqref{initial_formula}, but use the convexity bound in Corollary \ref{cor_convexity} rather than Corollary \ref{cor_lov}. Taking $|z|=q^{-1-1/\ell-\varepsilon}$ and $y=100 \varepsilon d$, using the pointwise bound in Corollary \ref{cor_convexity}, trivially bounding everything else and after a relabeling of the $\varepsilon$, we get that for $5 \leq \ell \leq 8$, we have
 \begin{align}
 \label{mediumell}
 S_{12}(\Sigma_{2'}) \ll q^{N ( \frac{3}{4}+\frac{1}{2\ell})-d(\frac{5}{4}-\frac{1}{2\ell})+ \frac{U(N)(\ell-2)}{4}+\varepsilon d},
 \end{align}
 while for $9 \leq \ell \leq 10$, we have 
 \begin{align}
 \label{mediumell2}
 S_{12}(\Sigma_{2'}) \ll q^{N (\frac{\ell}{4}-\frac{5}{4}+\frac{1}{2\ell}) - d (\frac{\ell}{4} - \frac{3}{4}-\frac{1}{2\ell})+\frac{U(N)(\ell-2)}{4}+\varepsilon d}.
 \end{align}

 \subsection{The case $\ell \geq 11$}
Using Corollary \ref{lov_sigma} rather than Corollary \ref{cor_convexity} in \eqref{initial_formula} and letting $A=VD^{2\ell-2}$, we get that
 \begin{align*}
 S_{12}(\Sigma_{2'}) \ll\max_{\substack{|z|=q^{-1-1/\ell-\varepsilon}}} \sum_{n \leq N} q^{n(\frac{1}{\ell}-\frac{1}{2}+\varepsilon)} \sum_{\deg(A) \leq n-d+2\ell y } |A|^{\varepsilon} \sum_{\substack{\deg(b) \leq U \\ (b,A)=1}} \mu^2(b) h(b) \sum_{i=0}^{\ell-1} |\psi_A^{(i)}(A,b,z)|,
 \end{align*}
 where we bounded $d(A) \ll |A|^{\varepsilon}$.
  Now we can use Corollary \ref{lov_sigma}, and since $y = 100 \varepsilon d$ and after a relabeling of $\varepsilon$, we get that
 \begin{equation} 
\label{bd_bigell}
S_{12}(\Sigma_{2'})\ll  q^{N(\frac{\ell}{4}-\frac{19}{12}+ \frac{1}{2\ell}) - d(\frac{\ell}{4}-\frac{13}{12}-\frac{1}{2\ell}) + \frac{U(N)(\ell-2)}{4}+\varepsilon d}.
\end{equation}

\section{The proofs of Theorem \ref{main_thm} and Corollary \ref{cor_nonvanishing}}
\label{proof}
Now we are finally ready to prove Theorem \ref{main_thm}. 
\subsection{The cases $\ell=3,4$}
When $\ell=4$, using equations \eqref{S12}, \eqref{s12sigma2''} and \eqref{ssigma3}, and since bounding the other Type I and Type II terms is similar to the bounds we have, we get that 
\begin{align*}
S_{12} \ll  q^{N(1/2+1/\ell)-d+ \frac{ U(N) (\ell-2)}{2}( 1/3+1/\ell) + d \varepsilon} + q^{N - \frac{U(N)}{2}-d+d\varepsilon}, \;\; \text{for } U(n) \leq \tfrac{n}{3}.
\end{align*}
Specializing the above to $\ell=4$, we pick
$U(n)= \frac{3n}{13},$ and then we get that
$$S_{12} \ll q^{\frac{23N}{26}-d+d \varepsilon}.$$
Similarly bounding the term $S_{11}$, and using equations \eqref{s1<y} and \eqref{s1>y} and keeping in mind that $S=S_1 + \overline{S_1}$ (where $S$ is given by \eqref{expression_s}), we have that
$$S \ll q^{\frac{23N}{26}-d+d \varepsilon} +q^{-d\varepsilon}.$$
Using equation \eqref{sigma1avg}, we get that 
\begin{align*}
\left\langle\Sigma(\phi,\chi_c)\right\rangle_{\mathcal{F}_\ell(g)} =  \widehat{\phi}(0) + O \Big( q^{\frac{23N}{26}-d+d \varepsilon} +q^{-d\varepsilon} + 1/d \Big).
\end{align*}
It follows that as long as $N = v(d-1) < d \Big(\frac{26}{23}- \varepsilon \Big)$, the error terms above are negligible, and Theorem \ref{main_thm} follows in the case $\ell=4$ with $v < \tfrac{26}{23}$.

When $\ell=3$, using equations \eqref{s12sigma2''} and \eqref{s12sigma'again} and similarly to the case $\ell=4$, we get that
$$S  \ll  q^{\frac{N}{2}-d+\frac{3U(N)}{4}+ d \varepsilon}+q^{\frac{5N}{6}-d+\varepsilon d}+ q^{N-\frac{U(N)}{2}-d+d\varepsilon}+ q^{-\varepsilon d},    \;\; \text{for } U(n) \leq \tfrac{n}{3}.$$
We pick $U(n)=\tfrac{n}{3}$, and we get that
$$S \ll q^{\frac{5N}{6}-d+d \varepsilon} + q^{-d\varepsilon}.$$
Hence, as long as $N = v(d-1) < d(\frac{6}{5}-\varepsilon)$, we get that the error terms are negligible, and Theorem \ref{main_thm} follows in the case $\ell=3$ with $v < \tfrac{6}{5}$.

\subsection{The cases $5 \leq \ell \leq 10$}
When $5 \leq \ell \leq 8$, using equations \eqref{s12sigma2''} and \eqref{mediumell}, we have that
$$S \ll q^{N ( \frac{3}{4}+\frac{1}{2\ell})-d(\frac{5}{4}-\frac{1}{2\ell})+ \frac{U(N) (\ell-2)}{4}+\varepsilon d}+ q^{N-\frac{U(N)}{2}-d+d\varepsilon}+ q^{-\varepsilon d}, \;\; \text{for $U(n) \leq \frac{n}{3}$}. $$
We pick
$$U(n) = \frac{ (n+d)(\ell-2)}{\ell^2},$$
which satisfies $U(n)  \leq n/3$ as long as $d \leq \frac{n(\ell^2-3\ell+6)}{3(\ell-2)}$, i.e. $n$ is large enough. This is justified by Remark \ref{remark-sum-over-n}: we could have replaced the sum $1 \leq n \leq N$ in $S$ by the sum $d(1-\varepsilon) < n \leq N$ for $\varepsilon$ small enough such that $d \leq \frac{n(\ell^2-3\ell+6)}{3(\ell-2)}$ when $n > d(1-\varepsilon)$.
With this choice of $U(n)$, 
$$S \ll q^{ \frac{N(2\ell^2-\ell+2)-d(2\ell^2+\ell-2)}{2\ell^2}+d\varepsilon } +q^{-d\varepsilon },$$
and when $N = v(d-1) < d ( \frac{2\ell^2+\ell-2}{2\ell^2-\ell+2}-\varepsilon)$, which is true when $v < \frac{2 \ell^2 + \ell -2}{2\ell^2 - \ell + 2}$, Theorem \ref{main_thm} follows for $5 \leq \ell \leq 8$.

When $9 \leq \ell \leq 10$, using equations \eqref{s12sigma2''} and \eqref{mediumell2}, we have that 
$$ S \ll q^{N (\frac{\ell}{4}-\frac{5}{4}+\frac{1}{2\ell}) - d (\frac{\ell}{4} - \frac{3}{4}-\frac{1}{2\ell})+\frac{U(N)(\ell-2)}{4}+d\varepsilon }+ q^{N-\frac{U(N)}{2}-d+d\varepsilon}+ q^{-d\varepsilon }.$$
We pick
$$U(n) = \frac{1}{\ell^2} \Big[ d(\ell^2-7\ell-2) - n (\ell^2-9\ell+2)\Big],$$
which satisfies $U(n)\leq n/3$ as long as $d \leq n \frac{4\ell^2-27\ell+6}{3\ell^2-21\ell-6}$, and we can restrict $n$ to this range by Remark \ref{remark-sum-over-n}. With this choice of $U(n)$, we get
$$S \ll q^{ \frac{ N(3\ell^2-9\ell+2)- d(3\ell^2-7\ell-2)}{2\ell^2}+d\varepsilon}+ q^{-d \varepsilon }.$$
Hence, for $N = v(d-1) < d (\frac{3\ell^2-7\ell-2}{3\ell^2-9\ell+2}-\varepsilon )$, Theorem \ref{main_thm} follows for this case.

\subsection{The case $\ell \geq 11$}

When $\ell \geq 11$, using equations \eqref{s12sigma2''} and \eqref{bd_bigell}, we get that
$$ S \ll q^{N(\frac{\ell}{4}-\frac{19}{12}+ \frac{1}{2\ell}) - d(\frac{\ell}{4}-\frac{13}{12}-\frac{1}{2\ell}) + \frac{U(N)(\ell-2)}{4}+d \varepsilon} +q^{N-\frac{U(N)}{2}-d+d\varepsilon}+q^{-d\varepsilon }.$$
We choose
$$U(n) = \frac{1}{3\ell^2} \Big[  d (3\ell^2-25 \ell-6) - n ( 3\ell^2 - 31\ell+6)\Big], $$
which satisfies $U(n) \leq n/3$ as long as $d \leq n  \left( \frac{4\ell^2-31\ell+6}{3\ell^2-25\ell-6} \right) $. As before, we can restrict $n$ to this range by Remark \ref{remark-sum-over-n}. With this choice of $U(n)$, we get
$$S \ll q^{ \frac{N(9\ell^2-31\ell+6)-d(9\ell^2-25\ell-6)}{6\ell^2}+d\varepsilon }+q^{-d \varepsilon}.$$
Hence, as long as $N = v(d-1) < d(\frac{9\ell^2-25\ell-6}{9\ell^2-31\ell+6}-\varepsilon)$, Theorem \ref{main_thm} follows.

\subsection{Proof of Corollary \ref{cor_nonvanishing}}

\begin{proof}[Proof of Corollary \ref{cor_nonvanishing}]
Recall that
\[v=\begin{cases}\frac{6}{5} & \ell=3,\\
\frac{26}{23} & \ell=4,\\
1+\frac{2(\ell-2)}{2\ell^2-\ell+2} & 5\leq \ell \leq 8,\\
1+\frac{2(\ell-2)}{3\ell^2-9\ell+2} & \ell=9, 10,\\
1 + \frac{6 (\ell - 2)}{9 \ell^2 - 31 \ell + 6} & 11\leq  \ell.\end{cases}
\]

From Theorem \ref{main_thm}, we have that
\begin{equation}
\left\langle\Sigma(\phi,\chi_c)\right\rangle_{\mathcal{F}_\ell(d)} = \int_{-\infty}^{\infty} \widehat{\phi}(y) \widehat{W}_{U}(y) \, dy + o(1),
\label{1ld}
\end{equation}
as $d \to \infty$, where $\phi$ is any even Schwartz function with the support of $\widehat{\phi}$ in $(-v,v)$, and where
\begin{equation}
\widehat{W}_{U}(y) = \delta_0(y).
\label{delta0}
\end{equation}
Let
$$p_{m}(d) = \frac{1}{|\mathcal{F}_{\ell}(d)|} \Big| \Big\{  \chi \in \mathcal{F}_{\ell}(d) : \text{ord}_{s=1/2} L(1/2,\chi) = m\Big\} \Big|.$$
Note that $p_0(d)$ gives the proportion of non-vanishing of $L(1/2,\chi)$ in the family.
We have that
\begin{equation}
\label{sumpm}
\sum_{m=0}^{\infty} p_m(d) =1.
\end{equation}
Now we choose the test function
\begin{equation*}
\phi_{v}(x) = \Big(  \frac{\sin(v \pi x)}{v \pi x}\Big)^2,
\end{equation*} and we have that $\widehat{\phi}_{v}(y)$ is supported on $(-v,v)$ and
$$\widehat{\phi}_{v}(y) =
\begin{cases}
\frac{v-|y|}{v^2} & \mbox{ if } |y| \leq v, \\
0 & \mbox{ otherwise.}
\end{cases}
$$

From \eqref{1ld} and since $\phi_{v}(x) \geq 0$, $\phi_{ v}(0)=1$, we have that
$$\sum_{m=1}^{\infty} m p_m(d) \leq \left\langle\Sigma(\phi,\chi_c)\right\rangle_{\mathcal{F}_\ell(d)}+ o(1), $$ and combining this with \eqref{sumpm}, it follows that
\begin{equation*}
p_0(d) \geq 1 - \left\langle\Sigma(\phi,\chi_c)\right\rangle_{\mathcal{F}_\ell(d)} +o(1).
\end{equation*}
Since $\widehat{\phi}_{v}(0)=1/v$, and using \eqref{delta0}, we get that
$$p_0(d) \geq 1- \frac{1}{v}+o(1),$$
which finishes the proof of Corollary \ref{cor_nonvanishing}.
\end{proof}

\section{The proof of Theorem \ref{theorem_cancellation}} \label{proof-thm-cancellation}
The proof follows from the proof of Theorem \ref{main_thm}. Namely, using equations \eqref{sumsigma} and \eqref{hn}, we have that
\begin{align*}
\sum_{\pi \in \mathcal{P}_n} G_{\ell}(R,\pi) \deg(\pi) =  \Sigma_1(n,R,U)-\Sigma_3(n,R,U)-\Sigma_{2'}(n,R,U)-\Sigma_{2''}(n,R,U).
\end{align*}
From equation \eqref{typeII}, we see that the Type II terms are bounded by 
\begin{equation}
\label{bdagain}
\Sigma_{2''}(n,R,U) \ll q^{\frac{3n}{2}-\frac{U}{2}+\varepsilon n } ,
\end{equation}
and similarly 
\begin{equation}
\label{bdagain2}
\Sigma_{3}(n,R,U) \ll   q^{\frac{3n}{2}-\frac{U}{2}+\varepsilon n } ,
\end{equation}
where we used the fact that $U \leq n/3$. Note that the bound does not depend on the shift $R$.

For the Type I terms, from Lemma \ref{lemma_sigma} and \eqref{fn}, we have that
\begin{align*}
\Sigma_{2'}(n,R,U) = \sum_{\substack{ \deg(b) \leq U \\ (b,R)=1}} \mu^2(b) h(b) \frac{1}{2  \pi i} \oint \sum_{i=0}^{\ell-1} \frac{ \psi_R^{(i)}(R,b,z)}{z^{n+1}} \, dz.
\end{align*}
Choosing the contour of integration to be $|z|=q^{-1-1/\ell-\varepsilon}$, we have that
\begin{align}  
\label{typeone}
\Sigma_{2'}(n,R,U) \ll \max_{\substack{|z|=q^{-1-1/\ell-\varepsilon}}} q^{n(1+1/\ell+\varepsilon)} \sum_{\deg(b) \leq U} |b|^{\varepsilon} \sum_{i=0}^{\ell-1} |\psi_R^{(i)}(R,b,z)|.
\end{align}
 Using Corollary \ref{cor_convexity}, for $\ell \geq 9$, we get that
$$ \Sigma_{2'}(n,R,U) \ll q^{n(1+1/\ell+\varepsilon)+ \frac{U(\ell-2)}{4}} |R|^{\frac{\ell}{4}-\frac{7}{4}-\frac{1}{2\ell}+\varepsilon},$$
and similarly
$$ \Sigma_{1}(n,R,U) \ll q^{n(1+1/\ell+\varepsilon)+ \frac{U(\ell-2)}{4}} |R|^{\frac{\ell}{4}-\frac{7}{4}-\frac{1}{2\ell}+\varepsilon}.$$
Using the two equations above and \eqref{bdagain} and \eqref{bdagain2}, we have that 
\begin{equation*}
\sum_{\pi \in \mathcal{P}_n} G_{\ell}(R,\pi) \ll q^{\frac{3n}{2}-\frac{U}{2}+\varepsilon n }+ q^{n(1+1/\ell+\varepsilon)+ \frac{U(\ell-2)}{4}} |R|^{\frac{\ell}{4}-\frac{7}{4}-\frac{1}{2\ell}+\varepsilon}.
\end{equation*}
Note that when 
$$ \deg(R) < \frac{2n(\ell-2)}{\ell^2-7\ell-2},$$ we pick
$$U = \frac{2n(\ell-2) - \deg(R)(\ell^2-7\ell-2)}{\ell^2}, $$ and we get in this case that
\begin{equation} 
\label{firstbound}
\sum_{\pi \in \mathcal{P}_n} G_{\ell}(R,\pi) \deg(\pi) \ll q^{\frac{n(3\ell^2-2\ell+4)+\deg(R)(\ell^2-7\ell-2)}{2\ell^2}+\varepsilon n }.
\end{equation}
When $3 < \ell \leq 8$, we use Corollary \ref{cor_convexity} and from equation \eqref{typeone}, we have that 
$$\Sigma_{2'}(n,R,U) \ll q^{n(1+1/\ell+\varepsilon)+ \frac{U(\ell-2)}{4}} |R|^{\frac{1}{4}-\frac{1}{2\ell}+\varepsilon},
$$ and similarly 
$$\Sigma_1(n,R,U) \ll q^{n(1+1/\ell+\varepsilon)+ \frac{U(\ell-2)}{4}}|R|^{\frac{1}{4}-\frac{1}{2\ell}+\varepsilon}.$$
Using these and \eqref{bdagain} and \eqref{bdagain2}, we have that
$$ \sum_{\pi \in \mathcal{P}_n} G_{\ell}(R,\pi) \ll q^{\frac{3n}{2}-\frac{U}{2}+\varepsilon n } + q^{n(1+1/\ell+\varepsilon)+ \frac{U(\ell-2)}{4}}|R|^{\frac{1}{4}-\frac{1}{2\ell}+\varepsilon}.$$
For $\deg(R) <2n$, choosing 
$$U = \frac{(\ell-2)(2n-\deg(R))}{\ell^2},$$ 
we have that 
\begin{equation} 
\label{secondbound}
\sum_{\pi \in \mathcal{P}_n} G_{\ell}(R,\pi) \deg(\pi) \ll q^{\frac{n(3\ell^2-2\ell+4)+\deg(R)(\ell-2)}{2\ell^2}+\varepsilon n}.
\end{equation}

When $\ell=3$, we shift the contour to $|z|=q^{-4/3-\varepsilon}$ and proceed as in Section \ref{typeI}.
Similarly to equation \eqref{long_expression}, we have that 
\begin{align*}
\Sigma_{2'}(n,R,u) &\ll \max_{\substack{|z|=q^{-4/3-\varepsilon}}} q^{4n/3+\varepsilon n} \sum_{\substack{\deg(b) \leq U \\ (b,R)=1}} \frac{\mu(b)^2}{|b|^{5/6-\varepsilon}}  \sum_{E|r_3^{*}} \frac{1}{|E|^{5/6-\varepsilon}}\sum_{d|bEr_1} \frac{1}{|d|^{7/6-\varepsilon}}  \sum_{i=0}^2 \Big| \psi^{(i)} \Big(  \frac{bEr_1r_2^2}{d},z \Big) \Big|,
\end{align*}
where in the above we write $R=r_1r_2^2r_3^3$ where $r_1, r_2$ are square-free and coprime, and where $r_3^{*}$ denotes, as before, the product of the primes dividing $r_3$, but not $r_1 r_2$. 
Now we use Lemma \ref{new_lemma}, and since $(b,R)=1$ and $b$ is square-free, we have that 
\begin{align*}
\Sigma_{2'}(n,R,u) &\ll q^{4n/3+\varepsilon n}|R|^{\varepsilon} \sum_{\substack{\deg(b) \leq U \\ (b,R)=1}} \frac{\mu(b)^2}{|b|^{5/6-\varepsilon}}  \sum_{E|r_3^{*}} \frac{1}{|E|^{5/6-\varepsilon}}\sum_{d|bEr_1} \frac{1}{|d|^{7/6-\varepsilon}}  \sum_{i=0}^2  \Big| \frac{bEr_1}{d} \Big|^{-1/6} \\
& \ll q^{4n/3+\varepsilon n}|R|^{\varepsilon}.
\end{align*}
Combining this with \eqref{bdagain} and \eqref{bdagain2} and choosing $U=n/3$, we get that

$$ \sum_{\pi \in \mathcal{P}_n} G_{\ell}(R,\pi) \ll q^{4n/3+\varepsilon n} |R|^{\varepsilon}.$$
Together with \eqref{firstbound} and \eqref{secondbound}, Theorem \ref{theorem_cancellation} follows.

\section{The large sieve}
\label{ls}
We will first prove Theorem \ref{largesieve}. We follow the proof in \cite{bgl} and \cite{HB00}. Let
$$\Sigma_1 = 
\sum_{M \in \cH_m} \left| \sum_{N \in \cH_n} \lambda(N) \legl{M}{N} \right|^2 ,$$
and let
$$\mathcal{B}_1(m,n) = \sup \left\{ \Sigma_1 : \sum |\lambda(N)|^2=1 \right\}.$$

We note that the $\Sigma_i$ terms that appear in this section are not to be confused with the terms appearing from Vaughan's identity.

Our goal is to show that
$$\mathcal{B}_1(m,n) \ll q^{\varepsilon(m+n)} (q^m+q^n+q^{\frac{2(m+n)}{3}}).$$
Note that we have
\begin{align*}
\Sigma_1 \leq \sum_{M \in \mathcal{M}_m}  \left| \sum_{N \in \cH_n} \lambda(N) \legl{M}{N} \right|^2 \leq q^m  \sum_{N \in \mathcal{H}_n} |\lambda(N)|^2 + \sum_{\substack{N_1, N_2 \in \mathcal{H}_n \\ N_1 \neq N_2}} \lambda(N_1) \overline{\lambda(N_2)} \sum_{M \in \mathcal{M}_m} \Big(  \frac{M}{N_1} \Big)_{\ell} \overline{ \Big(  \frac{M}{N_2} \Big)_{\ell}}.
\end{align*}
We use the the P\'olya--Vinogradov bound \eqref{pv} to bound the sum over $M$, and we obtain that
$$\sum_{M \in \mathcal{M}_m} \Big(  \frac{M}{N_1} \Big)_{\ell} \overline{ \Big(  \frac{M}{N_2} \Big)_{\ell}} \ll q^{n(1+\varepsilon)}.$$
Further applying Cauchy--Schwarz on the sum over $N_1, N_2$, we get that
$$\Sigma_1 \leq q^{\varepsilon n} (q^m+q^{2n}) \sum_{N \in \mathcal{H}_n} |\lambda(N)|^2,$$
and therefore,
\begin{equation}
\label{initial_bound}
\mathcal{B}_1(m,n) \ll q^{\varepsilon n} (q^m+q^{2n}).
\end{equation}
By duality (see Lemma $4$ in \cite{FloreaSound} and page 71 in \cite{Montgomery}),
we have that
\begin{equation*}
\mathcal{B}_1(m,n) = \mathcal{B}_1(n,m),
\end{equation*}
Similarly to \cite[Lemma $5$]{FloreaSound} and \cite[Lemma $9$]{HB95}, we have the following.
\begin{lemma}
\label{increasing}
There is a sufficiently large absolute constant $C$ as follows.  If $q^{m_2-m_1} \geq C(n+m_1)$, then $\mathcal{B}_1(m_1,n) \ll \mathcal{B}_1(m_2,n)$. If $q^{n_2-n_1} \geq C(n_1+m)$, then $\mathcal{B}_1(m,n_1) \ll \mathcal{B}_1(m,n_2).$
\end{lemma}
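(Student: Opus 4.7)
The plan is to follow the averaging argument of Heath-Brown: introduce an auxiliary prime $L$ of degree $k := m_2 - m_1$ and exploit the identity
\begin{equation*}
 \left( \frac{M_1}{N}\right)_\ell =\left( \frac{M_1 L}{N} \right)_\ell \overline{ \left( \frac{L}{N}\right)_\ell},
\end{equation*}
valid whenever $(L, N) = 1$. Fix coefficients $\lambda$ supported on $\mathcal{H}_n$, write $S_\lambda(M) := \sum_{N \in \mathcal{H}_n} \lambda(N) \left( \frac{M}{N}\right)_\ell$, and define
\begin{equation*}
 \lambda_L(N) := \lambda(N)  \overline{ \left( \frac{L}{N}\right)_\ell} \mathbf{1}[(L,N)=1], \qquad E_L(M_1) := \sum_{\substack{N \in \mathcal{H}_n \\ L \mid N}} \lambda(N) \left( \frac{M_1}{N}\right)_\ell.
\end{equation*}
Splitting the sum defining $S_\lambda(M_1)$ according to whether $L \mid N$ gives the decomposition $S_\lambda(M_1) = S_{\lambda_L}(M_1 L) + E_L(M_1)$ for every such $L$, with $\|\lambda_L\| \leq \|\lambda\|$.

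Applying $|a+b|^2 \leq 2|a|^2 + 2|b|^2$, summing over $L \in \mathcal{P}_k$ with $(L, M_1) = 1$, and then over $M_1 \in \mathcal{H}_{m_1}$, one obtains
\begin{equation*}
\left( \pi_k - \frac{m_1}{k}\right) \Sigma_1(m_1, n) \leq 2 \sum_{L \in \mathcal{P}_k} \sum_{\substack{M_1 \in \mathcal{H}_{m_1} \\ (M_1, L) = 1}} |S_{\lambda_L}(M_1 L)|^2 + 2 \sum_{L \in \mathcal{P}_k} \sum_{M_1 \in \mathcal{H}_{m_1}} |E_L(M_1)|^2,
\end{equation*}
where $\pi_k := |\mathcal{P}_k|$ and we used that $|\{ L \in \mathcal{P}_k : (L, M_1) = 1\}| \geq \pi_k - m_1/k$. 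For the first term on the right, the map $M_1 \mapsto M_1 L$ is a bijection from $\{M_1 \in \mathcal{H}_{m_1} : (L, M_1) = 1\}$ onto $\{M' \in \mathcal{H}_{m_2} : L \mid M'\}$, so the inner sum is bounded by $\mathcal{B}_1(m_2, n) \|\lambda_L\|^2 \leq \mathcal{B}_1(m_2, n) \|\lambda\|^2$. For the error term, $E_L$ is itself a character sum of exactly the form $S$ but with coefficients $\lambda(N) \mathbf{1}[L \mid N]$ supported on $\mathcal{H}_n$; hence the inner sum is at most $\mathcal{B}_1(m_1, n) \sum_{N: L \mid N} |\lambda(N)|^2$, and summing over $L$ and using that each $N \in \mathcal{H}_n$ has at most $n/k$ prime divisors of degree $k$ yields a total of $(n/k) \mathcal{B}_1(m_1, n) \|\lambda\|^2$. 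Taking $\lambda$ nearly extremal so that $\Sigma_1(m_1, n) \geq \frac{1}{2} \mathcal{B}_1(m_1, n) \|\lambda\|^2$, we conclude
\begin{equation*}
\left( \pi_k - \frac{m_1 + 4n}{k} \right) \mathcal{B}_1(m_1, n) \ll \pi_k\, \mathcal{B}_1(m_2, n),
\end{equation*}
and since $\pi_k \asymp q^k/k$, the hypothesis $q^k \geq C(n + m_1)$ for a sufficiently large absolute constant $C$ forces $\pi_k - (m_1 + 4n)/k \geq \pi_k/2$, yielding $\mathcal{B}_1(m_1, n) \ll \mathcal{B}_1(m_2, n)$ as required.

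For the second statement, we invoke the duality relation $\mathcal{B}_1(m, n) = \mathcal{B}_1(n, m)$ recorded just before the lemma (valid because $q \equiv 1 \pmod{2\ell}$ gives $(M/N)_\ell = (N/M)_\ell$). Applying the first part with $(m_1, m_2, n)$ replaced by $(n_1, n_2, m)$ yields $\mathcal{B}_1(n_1, m) \ll \mathcal{B}_1(n_2, m)$ under the hypothesis $q^{n_2 - n_1} \geq C(m + n_1)$, which by duality is exactly $\mathcal{B}_1(m, n_1) \ll \mathcal{B}_1(m, n_2)$. The main obstacle in this argument is that the naive identity $S_\lambda(M_1) = S_{\lambda_L}(M_1 L)$ fails whenever $L$ divides some $N$ in the support of $\lambda$; introducing the error term $E_L$ and controlling its contribution by iterating $\mathcal{B}_1(m_1, n)$ on the truncated coefficient sequence $\lambda(N) \mathbf{1}[L \mid N]$ is what rescues the argument, and the cost $(m_1 + 4n)/k$ of the resulting exclusions is precisely why one needs the strength $q^k \gg n + m_1$ of the hypothesis.
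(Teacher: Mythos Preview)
Your proof is correct and follows essentially the same averaging-over-auxiliary-primes argument as the paper. The only cosmetic difference is that the paper keeps the contribution from $P\mid M$ as a separate third term on the right-hand side, whereas you absorb it into the left by restricting the sum over $L$ to primes coprime to $M_1$ (paying the factor $\pi_k - m_1/k$); the arithmetic is otherwise identical, and the duality step for the second assertion is the same.
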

\begin{proof}
Let $P$ be a prime with $\deg(P) =m_2-m_1$. We have
\begin{align}
\sum_{M \in \mathcal{H}_{m_1}}  \Big| \sum_{N \in \mathcal{H}_n} \lambda(N) \legl{M}{N} \Big|^2 \leq & 2 \sum_{\substack{M \in \mathcal{H}_{m_1} \\ P \nmid M}} \Big| \sum_{\substack{N \in \mathcal{H}_n \\ P\mid N}} \lambda(N) \legl{M}{N} \Big|^2 + 2\sum_{\substack{M \in \mathcal{H}_{m_1}\\ P \nmid M}} \Big| \sum_{\substack{N \in \mathcal{H}_n \\ P \nmid N}} \lambda(N)  \legl{M}{N} \Big|^2  \nonumber \\
&+  \sum_{\substack{M \in \mathcal{H}_{m_1} \\ P \mid M}} \Big| \sum_{N \in \mathcal{H}_n } \lambda(N) \legl{M}{N} \Big|^2.
\label{inc1}
\end{align}
For the second sum on the right-hand side above, note that
\begin{align*}
\sum_{\substack{M \in \mathcal{H}_{m_1}\\P\nmid M}} \Big| \sum_{\substack{N \in \mathcal{H}_n \\ P \nmid N}} \lambda(N) \legl{M}{N} \Big|^2 & = \sum_{\substack{M \in \mathcal{H}_{m_1}\\ P\nmid M}} \Big| \sum_{\substack{N \in \mathcal{H}_n \\ P \nmid N}} \widetilde{\lambda}(N) \legl{PM}{N} \Big|^2 \leq \sum_{M \in \mathcal{H}_{m_2}} \Big| \sum_{\substack{N \in \mathcal{H}_n \\ P \nmid N}} \widetilde{\lambda}(N) \legl{M}{N}  \Big|^2 \\
& \leq \mathcal{B}_1(m_2,n) \sum_{N \in \mathcal{H}_n} |\lambda(N)|^2,
\end{align*}
where $ \widetilde{\lambda}(N) = \lambda(N)  \legl{P}{N}^{-1}$.

We also have that the first sum on the right-hand side of \eqref{inc1} is bounded by
\begin{equation*}
\sum_{\substack{M \in \mathcal{H}_{m_1}}} \Big| \sum_{\substack{N \in \mathcal{H}_n \\ P\mid N}} \lambda(N) \legl{M}{N} \Big|^2 \leq \mathcal{B}_1(m_1,n) \sum_{\substack{N \in \mathcal{H}_n \\ P\mid N}} |\lambda(N)|^2.
\end{equation*}
Combining the above and equation \eqref{inc1} gives that
\begin{align*}
\sum_{M \in \mathcal{H}_{m_1}}  \Big| \sum_{N \in \mathcal{H}_n} \lambda(N) \legl{M}{N} \Big|^2  \leq&   2 \mathcal{B}_1(m_1,n) \sum_{\substack{N \in \mathcal{H}_n \\ P\mid N}} |\lambda(N)|^2 + 2 \mathcal{B}_1(m_2,n) \sum_{N \in \mathcal{H}_n} |\lambda(N)|^2\\&+  \sum_{\substack{M \in \mathcal{H}_{m_1} \\ P \mid  M}} \Big| \sum_{N \in \mathcal{H}_n } \lambda(N) \legl{M}{N}  \Big|^2.
\end{align*}
Now we sum over all $P \in \mathcal{P}_{m_2-m_1}$. Since $$| \mathcal{P}_{m_2-m_1}| = \frac{q^{m_2-m_1}}{m_2-m_1} + O(q^{\frac{m_2-m_1}{2}}),$$ we get that
\begin{align*}
\frac{q^{m_2-m_1}}{m_2-m_1} \sum_{M \in \mathcal{H}_{m_1}} & \Big| \sum_{N \in \mathcal{H}_n} \lambda(N) \legl{M}{N} \Big|^2 \leq 2 \mathcal{B}_1(m_1,n) \sum_{\substack{P \in \mathcal{P}_{m_2-m_1}}}\sum_{\substack{N \in \mathcal{H}_n \\ P\mid N}} |\lambda (N)|^2\\&  + 2\frac{q^{m_2-m_1}}{m_2-m_1} \mathcal{B}_1(m_2,n) \sum_{N \in \mathcal{H}_n} |\lambda(N)|^2  + \sum_{\substack{P \in \mathcal{P}_{m_2-m_1}}}\sum_{\substack{M \in \mathcal{H}_{m_1} \\ P \mid M}} \Big| \sum_{N \in \mathcal{H}_n } \lambda(N) \legl{M}{N} \Big|^2.
\end{align*}

Note that for a given $N \in \mathcal{H}_n$, there are at most $\frac{n}{m_2-m_1}$ primes of degree $m_2-m_1$ dividing $N$, and similarly for a given $M \in \mathcal{H}_{m_1}$, there are at most $\frac{m_1}{m_2-m_1}$ primes of degree $m_2-m_1$ dividing $M$. Hence we get that
$$\frac{q^{m_2-m_1}}{m_2-m_1}  \mathcal{B}_1(m_1,n) \leq \frac{2n+m_1}{m_2-m_1} \mathcal{B}_1(m_1,n) +\frac{2 q^{m_2-m_1}}{m_2-m_1} \mathcal{B}_1(m_2,n).$$
We take any $C> 2$. If  $q^{m_2-m_1} \geq C (n+m_1)$, we have that $$\mathcal{B}_1(m_1,n) \ll \mathcal{B}_1(m_2,n).$$
The second part of the statement follows by duality.
\end{proof}

Let
$$ \Sigma_2 = \sum_{M \in \mathcal{M}_m}  \left| \sum_{N \in \cH_n} \lambda(N) \legl{M}{N} \right|^2$$
and let
$$\mathcal{B}_2(m,n) = \sup \left\{  \Sigma_2 : \sum |\lambda(N)|^2=1\right\}.$$
We have that 
\begin{equation} \mathcal{B}_1(m,n) \leq \mathcal{B}_2(m,n).
\label{b12}
\end{equation}
We also let
$$\Sigma_3 = \Sigma_3(m,n) = \sum_{\substack{N_1, N_2 \in \mathcal{H}_n \\ (N_1,N_2)=1}} \lambda(N_1) \overline{\lambda(N_2)} \sum_{M \in \mathcal{M}_m}  \Big(  \frac{M}{N_1} \Big)_{\ell} \overline{ \Big(  \frac{M}{N_2} \Big)_{\ell}},$$
and $$\mathcal{B}_3(m,n) = \sup \left\{  \Sigma_3 : \sum |\lambda(N)|^2=1\right\}.$$
We will prove the following Lemmas. 
\begin{lemma}
\label{lem6}
There exist $m_1, \ldots, m_{\ell}$ such that $\sum_{j=1}^{\ell} j m_j = m$ and
$$\mathcal{B}_2(m,n) \ll m^{\ell-1}  \min \Big\{q^{\sum_{ j \neq k} m_j} \mathcal{B}_1(m_k,n) : k \leq \ell-1  \Big\}.$$
\end{lemma}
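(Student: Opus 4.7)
The plan is to decompose each $M \in \mathcal{M}_m$ by its $\ell$-th power structure, isolate a single square-free component to apply $\mathcal{B}_1$, and handle the remaining components trivially. I will use the unique factorization $M = A_1 A_2^2 \cdots A_{\ell-1}^{\ell-1} B^{\ell}$, where $A_1, \ldots, A_{\ell-1}$ are pairwise coprime monic square-free polynomials and $B$ is monic, setting $m_j = \deg(A_j)$ for $1 \leq j \leq \ell-1$ and $m_\ell = \deg(B)$, so that $\sum_{j=1}^{\ell} j m_j = m$. Multiplicativity of the residue symbol gives $\legl{M}{N} = \mathbf{1}_{(B,N)=1}\prod_{j=1}^{\ell-1}\legl{A_j}{N}^j$, which, after substitution in $\Sigma_2$, decouples the sum into partial sums indexed by the tuples $(m_1, \ldots, m_\ell)$.

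Since the number of tuples with $\sum_{j=1}^{\ell} j m_j = m$ is $O(m^{\ell-1})$, the pigeonhole principle produces a specific tuple $(m_1, \ldots, m_\ell)$ whose corresponding partial sum $\Sigma_2^{(m_1,\ldots,m_\ell)}$ satisfies $\Sigma_2 \ll m^{\ell-1}\,\Sigma_2^{(m_1,\ldots,m_\ell)}$. Fixing this tuple and a choice of $k \in \{1,\ldots,\ell-1\}$, I will pull the $A_k$-sum to the outside, treating $\mu_k(N) := \lambda(N)\,\mathbf{1}_{(N,B)=1}\prod_{j\neq k,\,j\leq\ell-1}\legl{A_j}{N}^j$ (which satisfies $|\mu_k(N)| \leq |\lambda(N)|$) as the new coefficient sequence. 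The inner sum becomes $\sum_{A_k \in \mathcal{H}_{m_k}}\bigl|\sum_N \mu_k(N)(A_k/N)_\ell^k\bigr|^2$, which I will bound by $\mathcal{B}_1(m_k, n)\sum_N |\lambda(N)|^2$. Summing trivially over the $A_j$ for $j \neq k,\, j \leq \ell-1$ and over $B \in \mathcal{M}_{m_\ell}$ costs at most $q^{\sum_{j\neq k} m_j}$ terms, so for each $k$ one obtains $\Sigma_2^{(m_1,\ldots,m_\ell)} \leq q^{\sum_{j\neq k} m_j}\mathcal{B}_1(m_k,n)\sum_N|\lambda(N)|^2$, and taking the minimum over $k$ then gives the stated bound.

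The main delicate point is verifying that $\mathcal{B}_1(m_k, n)$ really controls the inner sum when the character is $(A_k/\cdot)_\ell^k$ rather than $(A_k/\cdot)_\ell$. For $\gcd(k,\ell) = 1$, this is immediate: $(A_k/\cdot)^k_\ell$ is the residue symbol obtained by replacing the chosen isomorphism $\Omega$ by $\Omega^k$, so the inner sum is literally an instance of the large sieve defining $\mathcal{B}_1(m_k, n)$. For $\gcd(k,\ell) > 1$, the character has strictly smaller order $\ell/\gcd(k,\ell)$, but it is still a nontrivial Dirichlet character modulo $N$, and both the P\'olya--Vinogradov estimate \eqref{pv} and the orthogonality arguments underlying \eqref{initial_bound} and Lemma \ref{increasing} go through unchanged. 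This is the only step that requires care; the factorization, substitution, and pigeonhole counting are routine bookkeeping.
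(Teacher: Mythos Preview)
Your argument is essentially identical to the paper's proof: the same $\ell$-th power factorization of $M$, the same pigeonhole to a single degree tuple, the same trick of fixing all but the $k$-th square-free factor and bounding the remaining sum by $\mathcal{B}_1(m_k,n)$. Your discussion of the exponent $k$ on the residue symbol is in fact more careful than the paper's, which simply writes the full character $\legl{M_1\cdots M_\ell^\ell}{N}$ and applies $\mathcal{B}_1(m_k,n)$ without comment; your resolution for $\gcd(k,\ell)=1$ via replacing $\Omega$ by $\Omega^k$ is clean, and for $\gcd(k,\ell)>1$ one should, strictly speaking, invoke the corresponding $\mathcal{B}_1$ for the lower-order symbol and run an induction on $\ell$ (or redefine $\mathcal{B}_1$ as the maximum over all powers), but this does not affect the final large sieve bound, which is uniform in the order.
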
 
\begin{proof}
We write $M= M_1 \cdots M_{\ell-1}^{\ell-1} M_{\ell}^{\ell}$, where $M_1, \ldots, M_{\ell-1}$ are monic, square-free and pairwise coprime. Then
\begin{align*}
\Sigma_2 &= \sum_{m_1+\cdots +\ell m_{\ell}=m} \sum_{\substack{M_j \in \cH_{m_j},\, j\leq \ell-1 \\ M_{\ell} \in \cM_{m_{\ell}}}} \Big| \sum_{N \in \cH_n} \lambda(N) \Big( \frac{M_1 \cdots M_{\ell}^{\ell}}{N} \Big)_{\ell} \Big|^2 \\
& \ll m^{\ell-1} \sum_{\substack{M_j \in \cH_{m_j},\, j\leq \ell-1 \\ M_{\ell} \in \cM_{m_{\ell}}}} \Big| \sum_{N \in \cH_n} \lambda(N) \Big( \frac{M_1 \cdots M_{\ell}^{\ell}}{N} \Big)_{\ell} \Big|^2,
\end{align*}
for some tuple $(m_1,\ldots m_{\ell})$ such that $m_1+\ldots+\ell m_{\ell}=m$. Fixing $k\leq \ell-1$, we have that
\begin{align*}
\Sigma_2 &\ll m^{\ell-1} \sum_{\substack{j=1 \\ j \neq k}}^{\ell-1} \sum_{M_j \in \cH_{m_j}} \sum_{M_{\ell} \in \cM_{m_{\ell}}} \sum_{M_k \in \cH_{m_k}} \Big| \sum_{N \in \cH_n} \lambda(N) \Big( \frac{M_1 \ldots M_{\ell}^{\ell}}{N} \Big)_{\ell} \Big|^2 \\
& \ll m^{\ell-1} q^{m_{\ell} + \sum_{j \leq \ell-1, j \neq k} m_j} \mathcal{B}_1(m_k,n) \sum_{N \in \cH_n} |\lambda(N)|^2.
\end{align*}
Using the fact that $\ell m_{\ell} = m- \sum_{j=1}^{\ell-1} j m_j$, the conclusion follows.
\end{proof}
We also have the following. 
\begin{lemma}
\label{lem7}
There exist $\Delta_1, \Delta_2$ with $\Delta_1 \leq \Delta_2$ such that
$$\mathcal{B}_2(m,n) \ll q^{\varepsilon n} \mathcal{B}_3(m-\Delta_1, n-\Delta_2).$$
\end{lemma}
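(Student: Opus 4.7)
The plan is to reduce the unrestricted sum defining $\Sigma_2$ to the coprime version $\Sigma_3$ at smaller parameters by stratifying the pair $(N_1,N_2)$ according to its greatest common divisor, and then handling the coprimality between $M$ and that gcd by M\"obius inversion.

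First, I would open out
\[
\Sigma_2 \;=\; \sum_{N_1,N_2 \in \mathcal{H}_n} \lambda(N_1)\overline{\lambda(N_2)} \sum_{M\in\mathcal{M}_m} \legl{M}{N_1}\overline{\legl{M}{N_2}}
\]
and group by $D=(N_1,N_2)$, writing $N_i=DN_i'$, so that $(N_1',N_2')=1$ and $(N_1'N_2',D)=1$. Since $\chi_{N_i}$ is a character mod $DN_i'$, the factor coming from $D$ is $\bigl|\legl{M}{D}\bigr|^2$, which equals $\mathbf{1}_{(M,D)=1}$. Expanding this by M\"obius, $\mathbf{1}_{(M,D)=1}=\sum_{E\mid D,\, E\mid M}\mu(E)$, and writing $M=EM'$ with $\deg(M')=m-\deg(E)$, the inner $M$-sum splits as
\[
\sum_{E\mid D}\mu(E)\,\legl{E}{N_1'}\overline{\legl{E}{N_2'}}\sum_{M'\in \mathcal{M}_{m-\deg(E)}} \legl{M'}{N_1'}\overline{\legl{M'}{N_2'}}.
\]
Defining, for a fixed pair $(D,E)$ with $E\mid D$,
\[
\lambda_{D,E}(N') \;=\; \begin{cases}\lambda(DN')\legl{E}{N'} & \text{if } (N',D)=1,\\ 0 & \text{otherwise},\end{cases}
\]
the quantity in brackets is exactly a $\Sigma_3$-type sum in the variables $(N_1',N_2',M')$ with modulus parameter $n-\deg(D)$, character-shift parameter $m-\deg(E)$, and coefficient sequence $\lambda_{D,E}$. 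By definition of $\mathcal{B}_3$ and scaling, its modulus is at most $\mathcal{B}_3(m-\deg(E),n-\deg(D))\sum_{N'}|\lambda_{D,E}(N')|^2$.

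Next, I would stratify by degrees: set $\Delta_1=\deg(E)\leq\Delta_2=\deg(D)$. Summing the bounds above gives
\[
|\Sigma_2| \;\leq\; \sum_{0\leq\Delta_1\leq\Delta_2\leq n} \mathcal{B}_3(m-\Delta_1,n-\Delta_2)\sum_{\substack{D\in\mathcal{H}_{\Delta_2}\\ E\mid D,\,\deg(E)=\Delta_1}} \sum_{\substack{N'\in\mathcal{H}_{n-\Delta_2}\\(N',D)=1}}|\lambda(DN')|^2.
\]
For the inner triple sum, writing $M=DN'\in\mathcal{H}_n$ and counting the number of admissible $(D,E)$ attached to $M$ by divisor bounds, the quantity is bounded by $\sum_{M\in\mathcal{H}_n} d(M)^2|\lambda(M)|^2\ll q^{\varepsilon n}\sum_M|\lambda(M)|^2$. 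The outer sum has only $O(n^2)$ pairs $(\Delta_1,\Delta_2)$, so by pigeonhole there exist $\Delta_1\leq\Delta_2$ with
\[
|\Sigma_2|\ll q^{\varepsilon n}\,\mathcal{B}_3(m-\Delta_1,n-\Delta_2)\sum_{M\in\mathcal{H}_n}|\lambda(M)|^2.
\]
Taking the supremum over $\lambda$ with $\sum|\lambda|^2=1$ yields the claimed bound $\mathcal{B}_2(m,n)\ll q^{\varepsilon n}\mathcal{B}_3(m-\Delta_1,n-\Delta_2)$.

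The only place needing care is verifying the divisor-bound step: one must check that each $M\in\mathcal{H}_n$ produces at most $d(M)^2$ choices of $(D,E)$ with $D\mid M$ of degree $\Delta_2$ and $E\mid D$ of degree $\Delta_1$, and then that $d(M)^2\ll q^{\varepsilon n}$ uniformly for square-free $M$ of degree $n$ via the standard function-field divisor bound. This is the essential (but routine) technical step; all other manipulations are formal. The constraint $\Delta_1\leq\Delta_2$ is automatic from $E\mid D$.
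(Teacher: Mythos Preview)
Your proposal is correct and follows essentially the same route as the paper: stratify $(N_1,N_2)$ by their gcd (the paper calls it $\delta$, you call it $D$), remove the coprimality condition $(M,\delta)=1$ by M\"obius inversion over $E\mid \delta$, recognize the resulting inner sum as a $\Sigma_3$-instance at parameters $(m-\deg(E),\,n-\deg(\delta))$, and finish with the divisor bound $d(N)^2\ll q^{\varepsilon n}$. The only cosmetic difference is that the paper bounds directly by $\widetilde{\mathcal{B}_3}(m,n):=\max_{\Delta_1\le\Delta_2}\mathcal{B}_3(m-\Delta_1,n-\Delta_2)$ rather than stratifying by degrees and invoking pigeonhole, but these are equivalent up to the harmless $O(n^2)$ factor you absorb into $q^{\varepsilon n}$.
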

\begin{proof}
We write
\begin{align*}
\Sigma_2 = \sum_{\delta \in \mathcal{H}_{\leq n}} \sum_{M \in \cM_m} \sum_{\substack{N_1,N_2\in \mathcal{H}_n\\ (N_1,N_2)=\delta}} \lambda(N_1) \overline{\lambda(N_2)} \Big(  \frac{M}{N_1} \Big)_{\ell} \overline{ \Big(  \frac{M}{N_2} \Big)_{\ell}}.
\end{align*}
Writing $N_i = \delta R_i$, we have that
\begin{align*}
\Sigma_2 &= \sum_{\delta \in \mathcal{H}_{\leq n}} \sum_{\substack{M \in \cM_m \\ (M,\delta)=1}} \sum_{\substack{R_1,R_2\in \mathcal{H}_{n-\deg(\delta)}\\ (R_1,R_2)=1}} \lambda( \delta R_1) \overline{\lambda(\delta R_2)} \Big(  \frac{M}{R_1} \Big)_{\ell} \overline{ \Big(  \frac{M}{R_2} \Big)_{\ell}} \\
&= \sum_{\delta \in \mathcal{H}_{\leq n}} \sum_{D | \delta} \mu(D) \sum_{S \in \mathcal{M}_{m-\deg(D)}} \sum_{\substack{R_1,R_2\in \mathcal{H}_{n-\deg(\delta)}\\ (R_1,R_2)=1}}  \lambda( \delta R_1) \overline{\lambda(\delta R_2)} \Big(  \frac{D}{R_1} \Big)_{\ell} \overline{ \Big(  \frac{D}{R_2} \Big)_{\ell}} \Big(  \frac{S}{R_1} \Big)_{\ell} \overline{ \Big(  \frac{S}{R_2} \Big)_{\ell}} \\
& \leq \sum_{\delta \in \mathcal{H}_{\leq n}} \sum_{D | \delta} \mathcal{B}_3(m-\deg(D), n - \deg(\delta))\sum_{R\in \mathcal{H}_{n-\deg(\delta)}} |\lambda(\delta R)|^2.
\end{align*}
Let $$\widetilde{\mathcal{B}_3}(m,n) = \max \{ \mathcal{B}_3(m - \Delta_1, n - \Delta_2): \Delta_1 \leq \Delta_2 \}.$$
From the above it follows that
\begin{align*}
\Sigma_2  & \leq \widetilde{\mathcal{B}_3}(m,n) \sum_{\delta \in \mathcal{H}_{\leq n}} \sum_{D|\delta} \sum_{R\in \mathcal{H}_{n-\deg(\delta)}} |\lambda(\delta R)|^2 \leq \widetilde{\mathcal{B}_3}(m,n) \sum_{N\in \mathcal{H}_n} d(N)^2 |\lambda(N)|^2  \\
& \ll q^{\varepsilon n} \widetilde{\mathcal{B}_3}(m,n)\sum_{N\in \mathcal{H}_n}  |\lambda(N)|^2.
\end{align*}
The conclusion now follows.
\end{proof}
We will also prove the following.
\begin{lemma}
\label{lem8}
For any $\varepsilon>0$, we have
$$\mathcal{B}_3(m,n) \ll q^{m-n+\varepsilon n} \max \{ \mathcal{B}_2(k,n) : k \leq 2n-m-1 \}.$$
\end{lemma}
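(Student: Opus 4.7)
\textit{Plan.} The strategy is to apply Poisson summation (Lemma \ref{poisson}) to the inner sum over $M$ in the definition of $\Sigma_3$, which will dualize the problem to a sum over polynomials $V$ of degree at most $2n-m-1$, and then recognize the resulting expression as a $\Sigma_2$-type quantity. For each pair of coprime square-free $N_1, N_2 \in \mathcal{H}_n$, the product $\chi := \chi_{N_1}\overline{\chi_{N_2}}$ is a primitive character modulo $N_1 N_2$ (of degree $2n$) and is \emph{even}, since for $\alpha \in \mathbb{F}_q^*$ one has $\chi(\alpha) = \alpha^{\frac{q-1}{\ell}(n-n)} = 1$; in particular $\varepsilon(\chi) = 1$. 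Since $\chi$ is nontrivial, the main term $G_\ell(0,\chi)$ in Lemma \ref{poisson} vanishes, and both cases ($2n \equiv 0 \pmod{\ell}$ and $2n \not\equiv 0 \pmod{\ell}$) give
\[
 \sum_{M \in \mathcal{M}_m} \chi(M) \; \ll \; q^{m-2n} \sum_{k \leq 2n-m-1} \Big| \sum_{V \in \mathcal{M}_k} G_\ell(V,\chi) \Big|.
\]

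Next, a standard CRT computation combined with reciprocity (valid since $q \equiv 1 \pmod{2\ell}$) collapses the mixed phase $\chi_{N_1}(N_2)\overline{\chi_{N_2}(N_1)}$ to $1$, giving, for $(V,N_1N_2)=1$,
\[
 G_\ell(V,\chi) = \overline{\chi_{N_2}(-1)}\, G_\ell(V,N_1)\,\overline{G_\ell(V,N_2)},
\]
while $G_\ell(V,\chi) = 0$ when $(V,N_1 N_2) > 1$ (as $N_1,N_2$ are square-free, by Lemma \ref{gauss}(ii)). Using $G_\ell(V,N_i) = \overline{\chi_{N_i}(V)}\, G(\chi_{N_i})$ with $|G(\chi_{N_i})| = q^{n/2}$, I would absorb the unimodular factor $G(\chi_N) q^{-n/2}$ into a new coefficient $\mu(N) := \lambda(N) G(\chi_N) q^{-n/2}$ (so that $\|\mu\|_2 = \|\lambda\|_2$), and then use reciprocity once more to write $\chi_{N_i}(V) = \chi_V(N_i)$. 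After these manipulations,
\[
 |\Sigma_3| \;\ll\; q^{m-n+\varepsilon n} \sum_{k \leq 2n-m-1} \sum_{V \in \mathcal{M}_k} \Big| \sum_{\substack{N_1, N_2 \in \mathcal{H}_n \\ (N_1,N_2)=1}} \mu(N_1)\overline{\mu(N_2)}\,\gamma(N_2)\,\overline{\chi_V(N_1)}\,\chi_V(N_2) \Big|,
\]
with $|\gamma(N_2)|=1$.

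To finish, I would open up the coprimality condition by M\"obius inversion $\mathbf{1}_{(N_1,N_2)=1} = \sum_{D \mid \gcd(N_1,N_2)} \mu(D)$. For each square-free $D$ the inner double sum factors as $A_D(V)\,\overline{B_D(V)}$ with
\[
 A_D(V) = \sum_{\substack{N \in \mathcal{H}_n \\ D \mid N}} \mu(N) \overline{\chi_V(N)}, \qquad B_D(V) = \sum_{\substack{N \in \mathcal{H}_n \\ D \mid N}} \mu(N)\overline{\gamma(N)}\,\overline{\chi_V(N)}.
\]
By Cauchy--Schwarz in $V$ and the definition of $\mathcal{B}_2$,
\[
 \sum_{V \in \mathcal{M}_k} |A_D(V)| |B_D(V)| \;\leq\; \Big(\sum_V |A_D(V)|^2\Big)^{1/2}\Big(\sum_V |B_D(V)|^2\Big)^{1/2} \;\leq\; \mathcal{B}_2(k,n) \sum_{D \mid N} |\mu(N)|^2.
\]
Summing over $D$ and using $\sum_D \sum_{D \mid N} |\mu(N)|^2 = \sum_N d(N) |\mu(N)|^2 \ll q^{\varepsilon n} \|\mu\|_2^2$, and then bounding the $O(n)$ terms in the $k$-sum by the maximum, yields $\mathcal{B}_3(m,n) \ll q^{m-n+\varepsilon n} \max\{ \mathcal{B}_2(k,n) : k \leq 2n-m-1\}$.

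\textit{Main obstacle.} The delicate step is the CRT factorization of $G_\ell(V,\chi_{N_1}\overline{\chi_{N_2}})$ together with the bookkeeping of all phase factors (from CRT, from reciprocity, from $\varepsilon(\chi)$, and from the individual Gauss sums $G(\chi_{N_i})$) so that they combine into something unimodular; it is crucial here that $\chi$ is even (giving $\varepsilon(\chi)=1$) and that $q \equiv 1 \pmod{2\ell}$ (giving symmetric reciprocity). Without these two inputs the $N_1$- and $N_2$-sums would not decouple cleanly and an additional Cauchy--Schwarz step, with a weaker bound, would be unavoidable.
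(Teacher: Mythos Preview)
Your proposal is correct and follows essentially the same approach as the paper: Poisson summation on the $M$-sum, factorization of $G_\ell(V,\chi_{N_1}\overline{\chi_{N_2}})$ via CRT into a product over $N_1$ and $N_2$, M\"obius inversion to remove the coprimality condition $(N_1,N_2)=1$, Cauchy--Schwarz in $V$ to separate the $N_1$- and $N_2$-sums, and finally the divisor bound $\sum_D\sum_{D\mid N}|\lambda(N)|^2\ll q^{\varepsilon n}\|\lambda\|_2^2$. Your observation that $\chi=\chi_{N_1}\overline{\chi_{N_2}}$ is \emph{always} even (because $\deg N_1=\deg N_2=n$ forces $\chi(\alpha)=1$ on $\mathbb F_q^*$) is in fact cleaner than the paper's presentation, which splits into cases according to $n\pmod\ell$; your remark makes the factor $\varepsilon(\chi)=1$ transparent and the case split unnecessary. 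The only superfluous step in your write-up is invoking reciprocity to pass from $\chi_{N_i}(V)$ to $\chi_V(N_i)$: the definition of $\mathcal B_2$ already uses $\big(\tfrac{M}{N}\big)_\ell=\chi_N(M)$, so no flip is needed.
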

\begin{remark}
 If $m \geq 2n$, the above statement is still true, as $\mathcal{B}_3(m,n)=0$ by orthogonality of characters.
\end{remark}
\begin{proof}
We rewrite
\begin{align*}
\Sigma_3 &= \sum_{\substack{N_1,N_2\in \mathcal{H}_n\\(N_1,N_2)=1}} \lambda(N_1) \overline{ \lambda(N_2)} \sum_{M \in \mathcal{M}_m}  \Big(  \frac{M}{N_1} \Big)_{\ell} \overline{ \Big(  \frac{M}{N_2} \Big)_{\ell}},
\end{align*}
and use Poisson summation for the sum over $M$.

We write $\Sigma_3 = \Sigma_{3,0} + \Sigma_{3,\not\equiv 0}$ according to whether $n \equiv 0 \pmod{\ell}$ or $n \not \equiv 0 \pmod{\ell}$ and we bound $\Sigma_{3,\not\equiv 0}$.
The case of  $\Sigma_{3,0}$ is similar. Let
$$  \chi(M) =  \Big(  \frac{M}{N_1} \Big)_{\ell} \overline{ \Big(  \frac{M}{N_2} \Big)_{\ell}} \mbox{ and recall that } \chi_{N_i}(M)= \Big(  \frac{M}{N_i} \Big)_{\ell}.$$
Using Lemma \ref{poisson}, we have that
\begin{align*}
\sum_{M \in \mathcal{M}_m} \chi(M) = q^{m-2n+1/2} \overline{\varepsilon(\chi)} \sum_{V \in \mathcal{M}_{\leq 2n-m-1}} G_{\ell}(V,\chi).
\end{align*}
Since $N_1$ and $N_2$ are square-free and coprime, and since $\legl{N_1}{N_2}\overline{\legl{N_2}{N_1}}=1$, we have
$$G_{\ell}(V,\chi) = G_{\ell}(V,\chi_{N_1}) G_{\ell}(V,\overline{\chi_{N_2}})=G_{\ell}(1,\chi_{N_1}) G_{\ell}(1,\overline{\chi_{N_2}}) \overline{ \Big( \frac{V}{N_1} \Big)}_{\ell} \Big(  \frac{V}{N_2} \Big)_{\ell}.$$
We have that
\begin{align*}
 \Sigma_{3,\not\equiv 0}= & q^{m-2n-1/2}\overline{\varepsilon(\chi)}\sum_{V \in \mathcal{M}_{\leq 2n-m-1}} \sum_{\substack{N_1,N_2\in \mathcal{H}_n\\(N_1,N_2)=1}} \lambda(N_1) \overline{\lambda(N_2)} G_\ell(1,\chi_{N_1}) G_\ell(1,\overline{\chi_{N_2}}) \overline{ \Big( \frac{V}{N_1} \Big)}_{\ell} \Big(  \frac{V}{N_2} \Big)_{\ell}\\
 =& q^{m-2n-1/2}\overline{\varepsilon(\chi)}\sum_{V \in \mathcal{M}_{\leq 2n-m-1}}\sum_{\substack{D \in \mathcal{M}_{\leq n}}} \mu(D)  \sum_{\substack{N_1,N_2\in \mathcal{H}_n\\D\mid N_1\\ D\mid N_2}} \lambda(N_1) \overline{\lambda(N_2)} G_\ell(1,\chi_{N_1}) G_\ell(1,\overline{\chi_{N_2}}) \overline{ \Big( \frac{V}{N_1} \Big)}_{\ell} \Big(  \frac{V}{N_2} \Big)_{\ell}\\
 \ll & q^{m-2n-1/2}  \sum_{\substack{D \in \mathcal{M}_{\leq n}}} \sum_{j=0}^{2n-m-1} \sum_{V \in \mathcal{M}_{j}} \Bigg| \sum_{\substack{N_1,N_2\in \mathcal{H}_n\\D\mid N_1\\ D\mid N_2}} \lambda(N_1) \overline{\lambda(N_2)} G_\ell(1,\chi_{N_1}) G_\ell(1,\overline{\chi_{N_2}}) \overline{ \Big( \frac{V}{N_1} \Big)}_{\ell} \Big(  \frac{V}{N_2} \Big)_{\ell}\Bigg|.
\end{align*}
Using Cauchy--Schwarz on the sum over $V$, we get that
\begin{align*}
\sum_{V \in \mathcal{M}_j} & \Big| \sum_{\substack{N_1,N_2 \in \mathcal{H}_n\\ D|N_1 \\ D|N_2}}  \lambda(N_1) \overline{\lambda(N_2)} G(1,\chi_{N_1}) G(1,\overline{\chi_{N_2}}) \overline{ \Big( \frac{V}{N_1} \Big)}_{\ell} \Big(  \frac{V}{N_2} \Big)_{\ell}  \Big| \\
& \leq \Big( \sum_{V \in \mathcal{M}_j} \Big|  \sum_{\substack{N_1\in \mathcal{H}_n\\D|N_1}}    \lambda(N_1)  G(1,\chi_{N_1}) \overline{ \Big( \frac{V}{N_1} \Big)}_{\ell} \Big|^2 \Big)^{1/2} \Big( \sum_{V \in \mathcal{M}_j} \Big|  \sum_{\substack{N_2\in \mathcal{H}_n\\D|N_2}}   \lambda(N_2)  G(1,\overline{\chi_{N_2}})  \Big( \frac{V}{N_2} \Big)_{\ell} \Big|^2 \Big)^{1/2}\\
& \leq \mathcal{B}_2(j,n) \sum_{\substack{N\in \mathcal{H}_n\\D|N}} q^n |\lambda(N)|^2,
\end{align*}
where in the last line we used the fact that $|G(1,\chi_N)|=|G(1,\overline{\chi_N})| = q^{n/2}.$
Combining the previous two equations, it follows that
\begin{align*}
S_{3,1} &\ll q^{m-n} \sum_{D\in \mathcal{M}_{\leq n}} \sum_{\substack{N\in \mathcal{H}_n\\D|N}} |\lambda(N)|^2 \max \{ \mathcal{B}_2(j,n) : j \leq 2n-m-2\} \\
& =q^{m-n}\max \{ \mathcal{B}_2(j,n) : j \leq 2n-m-2\}  \sum_{\substack{N\in \mathcal{H}_n}} |\lambda(N)|^2 d(N)  \\
& \ll q^{m-n+\varepsilon n} \max \{ \mathcal{B}_2(j,n) : j \leq 2n-m-2\}\sum_{\substack{N\in \mathcal{H}_n}} |\lambda(N)|^2.
\end{align*}
A similar expression holds for $S_{3,2}$, which finishes the proof of the statement.
\end{proof}
Now we will prove the following recursive estimate.
\begin{lemma}
\label{recursive}
Suppose that $4/3<\alpha \leq 2$ and
$$\mathcal{B}_1(m,n) \ll q^{\varepsilon(m+n)} \Big( q^m +q^{\alpha n} + q^{\frac{2}{3}(m+n)}\Big),$$ for any $\varepsilon>0$. Then
$$\mathcal{B}_1(m,n) \ll q^{\varepsilon(m+n)} \Big(q^m + q^{(2-\frac{2}{3\alpha-1})n} + q^{\frac{2}{3}(m+n)} \Big).$$
\end{lemma}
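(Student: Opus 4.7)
\medskip

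\noindent\textbf{Proof proposal for Lemma \ref{recursive}.} The plan is to close a self-referential loop by chaining together the four lemmas established earlier in the section (Lemmas \ref{increasing}, \ref{lem6}, \ref{lem7}, \ref{lem8}), using the assumed hypothesis at the innermost step so that the improvement cycles back to a bound on $\mathcal{B}_1$. Specifically, I would start from the trivial $\mathcal{B}_1(m,n)\le \mathcal{B}_2(m,n)$, apply Lemma \ref{lem7} to produce
\[\mathcal{B}_2(m,n)\ll q^{\varepsilon n}\,\mathcal{B}_3(m-\Delta_1,\,n-\Delta_2)\]
for some (worst-case) pair $0\le \Delta_1\le \Delta_2\le n$, then Lemma \ref{lem8} to get
\[\mathcal{B}_3(m-\Delta_1,n-\Delta_2)\ll q^{(m-\Delta_1)-(n-\Delta_2)+\varepsilon n}\max_{k\le 2(n-\Delta_2)-(m-\Delta_1)-1}\mathcal{B}_2(k,n-\Delta_2),\]
and then Lemma \ref{lem6}, taking the choice $k=1$ in its minimum, to produce
\[\mathcal{B}_2(k,n-\Delta_2)\ll k^{\ell-1}\,q^{(k-m_1)/2}\,\mathcal{B}_1(m_1,\,n-\Delta_2)\]
for some $m_1\le k$, using the identity $\sum_{j\ge 2}m_j\le (k-m_1)/2$ coming from $\sum_{j}j\,m_j=k$.

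Next I would insert the hypothesis
\[\mathcal{B}_1(m_1,n-\Delta_2)\ll q^{\varepsilon(m_1+n-\Delta_2)}\bigl(q^{m_1}+q^{\alpha(n-\Delta_2)}+q^{2(m_1+n-\Delta_2)/3}\bigr)\]
to close the loop. Multiplying out, this expresses $\mathcal{B}_1(m,n)$ (up to $q^{\varepsilon(m+n)}$) as the maximum over the free parameters $\Delta_1,\Delta_2,k,m_1$ of the sum of three terms with exponents
\[\underbrace{(m-\Delta_1)-(n-\Delta_2)+(k+m_1)/2}_{\text{first}},\quad \underbrace{(m-\Delta_1)+(\alpha-1)(n-\Delta_2)+(k-m_1)/2}_{\text{middle}},\quad \underbrace{(m-\Delta_1)+(k+m_1/3)/2+(n-\Delta_2)/3}_{\text{third}}.\]
Maximizing each term separately under the constraints $\Delta_1\le \Delta_2$, $k\le 2(n-\Delta_2)-(m-\Delta_1)-1$, and $m_1\le k$, the first and third exponents reduce cleanly to bounds matching the $q^m$ and $q^{2(m+n)/3}$ contributions of the conclusion.

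\medskip

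The delicate part---and the main obstacle---is the middle term. Writing $\sigma=n-\Delta_2$ and using the tightest constraint $k=2\sigma-(m-\Delta_1)-1$, its exponent becomes $(\alpha-1)\sigma+(k-m_1)/2+(m-\Delta_1)=(m-\Delta_1)/2+\alpha\sigma-m_1/2-1/2$. Pushing $\Delta_1=0$ but leaving $\sigma$ (equivalently $\Delta_2$) and $m_1$ as free variables, the choice that yields the claimed improvement is to \emph{balance the middle term against the third}: one picks $\sigma$ and $m_1$ so that $\alpha\sigma-m_1/2$ equals $2(m_1+\sigma)/3$ at optimum. Solving this balance equation together with the boundary of the $k$-constraint produces the rational relation $(2-\beta)(3\alpha-1)=2$, equivalently $\beta=2-\tfrac{2}{3\alpha-1}$, and gives the middle exponent $\beta n$ as desired. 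A parallel optimization shows that the competing bound from Lemma \ref{lem6} applied for $k\ge 2$ (which would exchange $m_1$ for a higher power in the decomposition) only improves matters, so taking $k=1$ in the min is adequate. The output is exactly $\mathcal{B}_1(m,n)\ll q^{\varepsilon(m+n)}\bigl(q^m+q^{\beta n}+q^{2(m+n)/3}\bigr)$. The assumption $\alpha>4/3$ is used to guarantee that this balance produces a genuine improvement (at $\alpha=4/3$ one recovers $\beta=\alpha$, the fixed point of the recursion, and below that the iteration would go the wrong direction).
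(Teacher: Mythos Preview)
Your chain has a genuine gap in the handling of the middle term. The parameters $\Delta_1,\Delta_2,k,m_1$ that arise from Lemmas \ref{lem7}, \ref{lem8}, \ref{lem6} are \emph{existence} parameters: they are worst-case values over which you must take a maximum, not free parameters you may choose. So when you write that ``one picks $\sigma$ and $m_1$'' to balance the middle term against the third, this is not a legitimate move. At the worst-case choice $\Delta_1=0$, $m_1=0$, $\sigma=n$, $k=2n-m-1$, your own formula gives the middle exponent $m/2+\alpha n-1/2$, and $q^{m/2+\alpha n}$ is simply not dominated by $q^m+q^{\beta n}+q^{2(m+n)/3}$ (for instance take $m$ small and recall $\alpha>\beta$). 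Your claim that the first exponent ``matches $q^m$'' is also off: it gives at most $q^{n-1}$, which happens to be absorbed by $q^{\beta n}$, but this already signals that the argument is not tracking the correct shape.

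The paper's proof repairs this via two ingredients you omit. First, it uses \emph{duality} at the innermost step: one inserts the dual hypothesis $\mathcal{B}_1(m_1,\nu)\ll q^{\varepsilon(m_1+\nu)}(q^{\alpha m_1}+q^{\nu}+q^{2(m_1+\nu)/3})$ into Lemma \ref{lem6}, which replaces the dangerous $q^{\alpha\nu}$ by the harmless $q^{\nu}$ and yields $\mathcal{B}_2(m,n)\ll q^{\varepsilon(m+n)}(q^{\alpha m}+q^{m/3+n})$. Feeding this through Lemmas \ref{lem8} and \ref{lem7} gives
\[
\mathcal{B}_2(m,n)\ll q^{\varepsilon(m+n)}\bigl(q^{m}+q^{m(1-\alpha)+n(2\alpha-1)}+q^{2(m+n)/3}\bigr).
\]
Second, and this is where a genuine optimization occurs, one invokes Lemma \ref{increasing}: since the exponent $m(1-\alpha)+n(2\alpha-1)$ is \emph{decreasing} in $m$ while $2(m+n)/3$ is increasing, one first enlarges $m$ to $m'\approx\max\{m,\tfrac{6\alpha-5}{3\alpha-1}n\}$ (legitimate because $\mathcal{B}_1(m,n)\ll\mathcal{B}_1(m',n)\le\mathcal{B}_2(m',n)$), and at this $m'$ the second and third terms coincide at $q^{\beta n}$ with $\beta=2-\tfrac{2}{3\alpha-1}$. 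That is the only point at which a free parameter is balanced, and it is enabled by the monotonicity lemma, not by the constraints from Lemmas \ref{lem6}--\ref{lem8}.
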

\begin{proof}
Using duality and the condition in the statement, we have that
$$ \mathcal{B}_1(m,n) \ll q^{\varepsilon(m+n)} \Big( q^{\alpha m} + q^{n} + q^{\frac{2}{3}(m+n)} \Big).$$
We use this in Lemma \ref{lem6} as follows. If $m_1 \geq m_2$, then we use Lemma \ref{lem6} with $k=1$, and we have that
\begin{align} 
\label{bd1}
\mathcal{B}_2(m,n) \ll q^{\varepsilon(m+n)+m_2+\cdots+m_{\ell}} \Big(q^{\alpha m_1} + q^n + q^{\frac{2}{3}(m_1+n)} \Big).
\end{align}
Now we use the fact that for $m_1 \geq m_2$, 
\begin{equation} q^{m_2+\cdots+m_{\ell}} \ll \min \{ q^{\frac{m}{3}} , q^{\frac{2}{3}(m-m_1)} \}.
\label{bd2}
\end{equation}
Indeed, to get the inequality above, note that
$$ m_2+\cdots + m_{\ell} = \frac{1}{3} (3m_2+\cdots + 3m_{\ell}) \leq \frac{1}{3} (m + m_2-m_1) \leq \frac{m}{3},$$
and
$$ m_2+\cdots+m_{\ell} = \frac{1}{2} (2m_2+\cdots +2m_{\ell}) \leq \frac{m-m_1}{2} \leq \frac{2(m-m_1)}{3}.$$
Combining equations \eqref{bd1} and \eqref{bd2}, we get that 
\begin{equation}
\label{b2b}
\mathcal{B}_2(m,n) \ll q^{\varepsilon(m+n)} (q^{\alpha m}+ q^{\frac{m}{3}+ n}).
\end{equation}
If $m_1 < m_2$, then we use Lemma \ref{lem6} with $k=2$ and use the fact that
\begin{equation}
\label{fact2}
q^{m_1+m_3+\cdots+m_{\ell}} \leq \min \{ q^{\frac{m}{3}}, q^{\frac{2}{3}(m-m_2)} \}.
\end{equation}
Indeed, to see the above, note that 
$$ m_1+ m_3+\cdots+m_{\ell} = \frac{1}{3} (3m_1+3m_3+\cdots+3m_{\ell}) \leq \frac{1}{3} (m+2m_1-2m_2).$$
We then get that
\begin{align*}
\mathcal{B}_2(m,n) &\ll q^{\varepsilon(m+n)} \min \{ q^{\frac{m}{3}}, q^{\frac{2}{3}(m-m_2)} \} \Big(q^{\alpha m_2} + q^n + q^{\frac{2}{3}(m_2+n)} \Big) \\
& \ll q^{\varepsilon(m+n)} \Big( q^{\alpha m} + q^{\frac{m}{3}+n}\Big).
\end{align*}
Combining this with \eqref{b2b}, we have that
\begin{equation}
\mathcal{B}_2(m,n) \ll q^{\varepsilon(m+n)} (q^{\alpha m}+ q^{\frac{m}{3}+ n}).
\label{bound_b2}
\end{equation}
Using Lemma \ref{lem8} and equation \eqref{bound_b2}, we get that
\begin{equation*}
\mathcal{B}_3(m,n) \ll q^{\varepsilon(m+n)} \Big(q^{n(2\alpha-1)+m(1-\alpha)} + q^{\frac{2}{3}(m+n)} \Big).
\end{equation*}
Now we use the above and Lemma \ref{lem7} and it follows that there exist $\Delta_1 \leq \Delta_2$ such that 
\begin{align*}
\mathcal{B}_2(m,n) &\ll q^{\varepsilon n} \mathcal{B}_3(m-\Delta_1, n - \Delta_2) \ll q^{\varepsilon(m+n)} \Big(  q^{(n-\Delta_2)(2\alpha-1)+(m-\Delta_1)(1-\alpha)} + q^{\frac{2}{3} (m+n-\Delta_1-\Delta_2)}\Big) \\
& \ll q^{\varepsilon(m+n)} \Big(q^{m(1-\alpha)+n(2\alpha-1)} + q^{\frac{2}{3} (m+n)} \Big),
\end{align*}
where in the last line we used the fact that $\Delta_1 \leq \Delta_2$. Note that the above holds if $n \geq 1$. When $n=0$, then we have the bound
$$ \mathcal{B}_2(m,0) \ll q^{m}.$$
Putting the two bounds above together, we get that
\begin{align}
\label{bd_interm}
\mathcal{B}_2(m,n) \ll q^{\varepsilon(m+n)} \Big(q^m+ q^{m(1-\alpha)+n(2\alpha-1)} + q^{\frac{2}{3} (m+n)} \Big).
\end{align}
Now using Lemma \ref{increasing} and \eqref{b12}, we have that there exists some large enough absolute constant $C$  such that for $m' \geq m+\log_q(C(m+n))$
\begin{align*}
\mathcal{B}_1(m,n) &\ll \mathcal{B}_1(m',n) \leq \mathcal{B}_2(m',n) \\
& \ll q^{\varepsilon(m'+n)} \Big(q^{m'}+ q^{m'(1-\alpha)+n(2\alpha-1)} + q^{\frac{2}{3} (m'+n)} \Big),
\end{align*}
where in the last line we used equation \eqref{bd_interm}. Now we pick
$m' =  \Big[ \max \{ m, \frac{6\alpha-5}{3\alpha-1} n\}+\log_q(C(m+n)) \Big]+1$. Note that if $\max \{ m, \frac{6\alpha-5}{3\alpha-1} n\}= m$ then the second term above is bounded by the third. If $\max \{ m, \frac{6\alpha-5}{3\alpha-1} n\}= \frac{6\alpha-5}{3\alpha-1}n$, then
$$\mathcal{B}_1(m,n) \ll q^{\varepsilon(m+n)} q^{ \frac{6\alpha-4}{3\alpha-1}n},$$
where we have used that $C(m+n)$ is negligible compared to $q^{\varepsilon (m+n)}$.

Combining these, we get that
$$ \mathcal{B}_1(m,n) \ll q^{\varepsilon(m+n)} \Big(q^m+q^{ \frac{6\alpha-4}{3\alpha-1}n}+q^{\frac{2}{3} (m+n)} \Big).$$
\end{proof}

\begin{proof}[Proof of Theorem \ref{largesieve}]

Now we are finally ready to prove Theorem \ref{largesieve}. Note that the condition in Lemma \ref{recursive} is satisfied with $\alpha = 2$ by equation \eqref{initial_bound}. Repeatedly using Lemma \ref{recursive}, we deduce that the bound holds for the fixed point of $\alpha \mapsto 2 - 2/(3 \alpha -1)$, which is $\alpha = 4/3$, and then
$$\mathcal{B}_1(m,n) \ll q^{\varepsilon(m+n)} \Big( q^m +q^{\frac{4n}{3}} + q^{\frac{2}{3}(m+n)}\Big).$$
By duality,
\begin{align*}
 \mathcal{B}_1(m,n) &\ll q^{\varepsilon(m+n)} \min \left\{q^m+q^{ \frac{4n}{3}}+q^{\frac{2}{3} (m+n)}, q^n+q^{ \frac{4m}{3}}+q^{\frac{2}{3} (m+n)} \right\} \\
 & \ll q^{\varepsilon(m+n)} \Big(q^m+q^n+q^{\frac{2}{3}(m+n)} \Big),
\end{align*}
which finishes the proof.
\end{proof}

\begin{proof}[Proof of Theorem \ref{largesieve2}]
We use Lemma \ref{lem6} and Theorem \ref{largesieve}. If $m_1 \geq m_2$, then we use Lemma \ref{lem6} with $k=1$. We have that
\begin{align*}
\mathcal{B}_2(m,n) & \ll q^{\varepsilon(m+n)} q^{m_2+\cdots+m_{\ell}} \mathcal{B}_1(m_1,n) \\
& \ll  q^{\varepsilon(m+n)} q^{m_2+\cdots+m_{\ell}} \Big(q^{m_1}+q^n+q^{\frac{2}{3}(m_1+n)} \Big).
\end{align*}
We further use \eqref{bd2} in the equation above, which finishes the proof in this case.

If $m_1<m_2$, then we use Lemma \ref{lem6} with $k=2$. We get that
\begin{align*}
\mathcal{B}_2(m,n) & \ll q^{\varepsilon(m+n)} q^{m_1+m_3+\cdots+m_{\ell}} \mathcal{B}_1(m_2,n) \\
& \ll  q^{\varepsilon(m+n)} q^{m_1+m_3+\cdots+m_{\ell}} \Big(q^{m_2}+q^n+q^{\frac{2}{3}(m_2+n)} \Big).
\end{align*}
Using the fact \eqref{fact2} again finishes the proof.
\end{proof}

 \bibliographystyle{amsalpha}

\bibliography{Bibliography}
\end{document}